\numberwithin{equation}{section}
\renewcommand{\subsection}{\@startsection
{subsection}{2}{0mm}{\baselineskip}{-0.25cm}
{\normalfont\normalsize\bf}}
\newtheorem{theorem}{Theorem}[section]
\newtheorem{proposition}[theorem]{Proposition}
\newtheorem{lemma}[theorem]{Lemma}
\newtheorem{remark}[theorem]{Remark}
\newtheorem{result}[theorem]{Result}
\newtheorem{example}[theorem]{Example}
\def\PG{{\rm{PG}}}
\newcommand{\PSL}{\mbox{\rm PSL}}
\newcommand{\PGL}{\mbox{\rm PGL}}
\newcommand{\PGU}{\mbox{\rm PGU}}
\def\F{\mathbf F}
\def\cC{\mathcal C}
\def\cD{\mathcal D}
\def\cE{\mathcal E}
\def\cF{\mathcal F}
\def\cG{\mathcal G}
\def\cH{\mathcal H}
\def\cM{\mathcal M}
\def\cN{\mathcal N}
\def\cO{\mathcal O}
\def\cX{\mathcal X}
\def\cU{\mathcal U}
\def\cQ{\mathcal Q}
\def\fq{\mathbb F_{q^2}}
\def\gg{\mathfrak{g}}
\def\det{{\rm det}}
\newcommand{\qa}{\textstyle\frac{1}{4}}
\newcommand{\ha}{\textstyle\frac{1}{2}}
\newcommand{\aut}{\rm{Aut}}
\def\xfq2{{\cX(\mathbb{F}_{q^2})}}
\def\F+xfq2{{\cF^+(\mathbb{F}_{q^2})}}
\def\xpfq2{{\cX^+(\mathbb{F}_{q^2})}}
\def\xmfq2{{\cX^-(\mathbb{F}_{q^2})}}
\title{Hemisystems of the Hermitian Surface}
\author{G\'{a}bor Korchm\'{a}ros\footnote{G\'{a}bor Korchm\'{a}ros: gabor.korchmaros@unibas.it \hfill\newline\hspace*{1.4em} Dipartimento di Matematica, Informatica ed Economia -
Universit\`{a} degli Studi della Basilicata - Viale dell'Ateneo Lucano 10 - 85100 Potenza (Italy).}
\hfill\newline\hspace*{1.4em}
G\'{a}bor P. Nagy\footnote{G\'abor P. Nagy: nagyg@math.bme.hu  Department of Algebra, Budapest University of Technology and Economics Egry J\'ozsef utca 1, H-1111 Budapest, Hungary, and nagyg@math.u-szeged.hu  Bolyai Institute, University of Szeged (Hungary), Research supported by NKFIH-OTKA Grants 114614 and 119687.}
\and
Pietro Speziali:\footnote{Pietro Speziali: pietro.speziali@unibas.it \hfill\newline\hspace*{1.4em} Dipartimento di Matematica, Informatica ed Economia -
Universit\`{a} degli Studi della Basilicata - Viale dell'Ateneo Lucano 10 - 85100 Potenza (Italy).
}}
\date{}
\begin{document}
\date{}
\maketitle
\begin{abstract} We present a new method for the study of hemisystems of the Hermitian surface $\cU_3$ of $\PG(3,q^2)$. The basic idea is to represent generator-sets of $\cU_3$ by means of a maximal curve naturally embedded in $\cU_3$ so that a sufficient condition for the existence of hemisystems may follow from results about maximal curves and their automorphism groups. In this paper we obtain a hemisystem in $\PG(3,p^2)$ for each $p$ prime of the form
$p=1+16n^2$ with an integer $n$. Since the famous Landau's conjecture dating back to 1904 is still to be proved (or disproved), it is unknown whether there exists an infinite sequence of such primes. What is known so far is that just $18$ primes up to $51000$ with this property exist, namely $17,257,401,577, 1297,1601, 3137, 7057,13457,14401,15377,24337,25601,30977,$ $ 32401,33857,41617,50177.$
The scarcity of such primes seems to confirm that hemisystems of $\cU_3$ are rare objects.
\end{abstract}
\section{Introduction}
Construction of hemisystems on the Hermitian surface $\cU_3(q)$ of $\PG(3,q^2)$, as well as in any generalized quadrangle of order $(q^2,q)$, is a relevant issue because hemisystems give rise to important combinatorial objects which have been under investigation for many years, such as strongly regular graphs, partial quadrangles and 4-class imprimitive cometric
$Q$-antipodal association schemes that are not metric; for a thorough discussion about these connections see \cite{cp,bb,co}. Nevertheless, hemisystems of $\cU_3(q)$ appear to be rare. It had even been thought for a long time that just one hemisystem of $\cU_3$ existed, namely the (sporadic) example in $\PG(3,9)$ exhibited by B. Segre 1966 in his treatise \cite{segre};  see \cite{thas}. Actually, this was proved wrong in 2005 by Cossidente and Penttila \cite{cp} who were able to construct a hemisystem on $\cU_3(q)$ for every odd $q$. Their infinite family of hemisystems arises from commuting orthogonal and unitary polarities, first considered by Segre in \cite{segre}, and have the nice property of being left invariant by a subgroup of $\PGU(4,q)$ isomorphic to $\PSL(2,q^2)$.  Cossidente and Penttila \cite[Remark 4.4]{cp} also exhibited a hemisystem of $\cU_3(7)$ left invariant by a metacyclic group of order $516$, and a hemisystem of $\cU_3(9)$ left invariant by a metacyclic group of order $876$. Later on,  Bamberg, Giudici and Royle \cite{bb1} and \cite[Section 4.1]{bgr} found some more examples for smaller $q$'s, namely for $q=11,17,19,23,27$, left invariant by a cyclic group of order $q^2-q+1$. Very recently, Bamberg, Lee, Momihara and Xiang \cite{bb} came up with a new infinite family of hemisystems on $\cU_3(q)$ for every $q\equiv -1 \pmod 4$ left invariant by a metacyclic group $C_{(q^3+1)/4}:C_3$ that generalize the above examples. Their cyclotomic type of construction is carried out in the dual setting performed on the quadric $Q^-(5,q)$ and provides a (dual) hemisystem arising from a union of cyclotomic classes of $\mathbb{F}_{q^6}^*$. The proof of the existence of such hemisystems uses several tools from Finite geometry and Number theory, as well as, new results on (additive) character values of certain subsets of vectors obtained by computations involving Gauss sums. Finally, in \cite{cpa}, for every odd $q$, a hemisystem of $\cU_3$ left invariant by a subgroup of $\PGU(4,q)$ of order $(q+1)q^2$ is constructed.

 In this paper we adopt a different method for the construction of a hemisystem on $\cU_3(q)$. Our approach depends on the Natural embedding theorem \cite{KT} which states that any absolutely irreducible curve $\cX$ of degree $q+1$ lying on $\cU_3(q)$ is a (non-singular, $\mathbb{F}_{q^2}$-rational) maximal curve. In Section \ref{maximal} we show how such a maximal curve $\cX$ gives rise to a hemisystem of $\cU_3(q)$ when certain hypotheses are satisfied, and we also show that these hypotheses  may be stated in terms of the action of the automorphism  group of $\cX$ on the generators of $\cU_3(q)$.
For instance, these hypotheses are fulfilled  when $\cX$ is a rational curve, and in this case the arising hemisystem is projectively equivalent to the Cossidente-Pentilla hemisystem.

To obtain new hemisystems with this approach, a good choice for $\cX$ appears to be the Fuhrmann-Torres curve. We work out this case for $q\equiv 1 \pmod 8$ and we indeed find new hemisystems. Unfortunately, these hemisystems exist for some but not all $q$'s. We explain briefly why this occurs. Although our technical lemmas hold for any $q\equiv 1 \pmod 8$ (even for any $q\equiv 1 \pmod 4$), the last step of our procedure requires that the elliptic curve $\cE_3$ with affine equation $Y^2=X^3-X$ has as many as $(q-1)$ (or $(q+3)$) points over $\mathbb{F}_q$. If this occurs then our procedure provides a hemisystem.  On the other hand, for $q=p$ and $p\equiv 1 \pmod 8$, the latter case, namely $N_p(\cE_3)=p+3$, never occurs whereas $N_p(\cE_3)=p-1$ if and only if $p=1+16n^2$ with an integer $n$, and such primes exist. Whether an infinite sequence of such primes exists is still unknown, this question being related with Landau's conjecture as we have pointed out in Abstract; see also Section \ref{ser}. For $q=p^h$ with $h>1$ and $p\equiv 1 \pmod 4$, the problem whether $N_q(\cE_3)=\{q-1,q+3\}$ may occur for some $q$  has not been investigated so far in Number theory. Computer aided search suggests that the answer should be negative.

Our main result is stated in the following theorem.
\begin{theorem}
\label{teomain} Let $p$ be a prime number where $p=1+16a^2$ with an integer $a$. Then there exists a hemisystem in the Hermitian surface $\cU_3$ of $\PG(3,p^2)$ which is left invariant by a subgroup of $\PGU(4,p)$ isomorphic to $\PSL(2,p)\times C_{(p+1)/2}$.
\end{theorem}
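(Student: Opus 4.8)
The plan is to exhibit the hemisystem as the set of generators of $\cU_3(p)$ meeting a suitably chosen absolutely irreducible curve $\cX$ of degree $p+1$ on $\cU_3(p)$ in a prescribed way, and to verify that the hypotheses identified in Section \ref{maximal} are met when $\cX$ is the Fuhrmann--Torres curve. First I would recall, from the earlier sections, the explicit model of the Fuhrmann--Torres curve naturally embedded in $\cU_3(p)$, together with the description of its automorphism group and of the way this group acts on the two systems of generators (lines) of the Hermitian surface. The key point from Section \ref{maximal} is that whenever the automorphism group $G=\aut(\cX)$ acts on the generators so that each $G$-orbit on generators has the right parity of intersection numbers with $\cX$ (equivalently, so that one can consistently ``two-colour'' the generators through each point), the union of half of the generators through each point is a hemisystem. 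So the proof reduces to a purely group-theoretic and curve-theoretic bookkeeping: identify the $G$-orbits on generators and compute $|\cX\cap \ell|$ for a representative $\ell$ of each orbit.

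Next I would pin down the group. The Fuhrmann--Torres curve carries an automorphism group containing $\PSL(2,p)$ (acting on the ``rational-curve part'' of the construction) and, because $p\equiv 1\pmod 8$, an extra cyclic factor; tracing through the embedding into $\PGU(4,p)$ I expect this to land as $\PSL(2,p)\times C_{(p+1)/2}$, which is exactly the group in the statement. The technical lemmas of the earlier sections are asserted to hold for all $q\equiv 1\pmod 4$, so the only arithmetic input I need to feed in is the count of $\mathbb{F}_p$-rational points on the elliptic curve $\cE_3\colon Y^2=X^3-X$. Here I would invoke the classical fact (Gauss) that for $p\equiv 1\pmod 4$, writing $p=c^2+d^2$ with $c$ odd, one has $N_p(\cE_3)=p+1-2c$ up to sign and parity normalisation; the condition $p=1+16a^2$ forces the relevant value to be $N_p(\cE_3)=p-1$, which is precisely the numerical hypothesis under which the construction of Section \ref{maximal} goes through. (The companion value $p+3$ is ruled out for $q=p$, as noted in the Introduction, so no separate case is needed.)

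With the group action and the point count in hand, the construction is assembled as follows: for each point $P$ of $\cU_3(p)$, the $q+1$ generators through $P$ are partitioned by the curve $\cX$ into those meeting $\cX$ in an even number of points and those meeting it in an odd number of points; the lemmas of Section \ref{maximal} guarantee this partition is into two equal halves and that choosing, say, the ``odd'' class at every point yields a set $\cH$ with each point on exactly $(q+1)/2$ chosen generators and each generator either entirely chosen or entirely unchosen --- i.e.\ a hemisystem. Invariance under $\PSL(2,p)\times C_{(p+1)/2}$ is immediate since this group stabilises $\cX$ and hence preserves intersection numbers.

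The main obstacle I anticipate is the orbit analysis together with the parity computation: one must show that $\aut(\cX)$ has few enough orbits on the $p^3+1$ generators, and for each orbit compute $|\cX\cap\ell|\bmod 2$ exactly, which is where the hypothesis $N_p(\cE_3)=p-1$ must be used to make the parities come out balanced. A secondary delicate point is verifying that the automorphisms of the abstract curve $\cX$ really do extend to projectivities of $\PG(3,p^2)$ preserving $\cU_3(p)$ (so that they lie in $\PGU(4,p)$) and that the resulting subgroup is exactly $\PSL(2,p)\times C_{(p+1)/2}$ rather than a proper subgroup or a larger group --- this requires care with the structure of $\aut(\cX)$ for the Fuhrmann--Torres curve and with how the Natural Embedding Theorem \cite{KT} transports automorphisms. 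Once these are settled, checking that $\cH$ is genuinely new (not projectively equivalent to previously known hemisystems) follows by comparing the order and structure of the stabiliser group against the list in the Introduction.
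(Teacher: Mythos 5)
Your overall skeleton (use the Fuhrmann--Torres curve on $\cU_3$, invoke the machinery of Section \ref{hema}, and feed in the count of points of $Y^2=X^3-X$) matches the paper, but the central mechanism you propose does not work, and the step where the elliptic curve actually enters is missing. The ``two-colouring by parity of $|\cX\cap\ell|$'' cannot produce the hemisystem: by Result \ref{rs1}, for $Q\in\cX(\mathbb{F}_{q^2})$ the tangent plane $\Pi_Q$ meets $\cX$ only at $Q$, so \emph{all} $q+1$ generators through $Q$ meet $\cX(\mathbb{F}_{q^2})$ in exactly one point; their intersection parity is constant and cannot split them into two halves. In the paper the halving is purely group-theoretic: $\mathfrak{G}_Q$ acts regularly on the $q+1$ generators through $Q$, its index-$2$ subgroup $\mathfrak{H}_Q$ has two orbits of length $\ha(q+1)$ (Conditions (C),(D), Theorem \ref{teo6lug2017}), and the hemisystem is $\cM\cup\cH$, where $\cM$ is a union of suitably chosen $\mathfrak{H}$-orbits on the generators meeting $\cX(\mathbb{F}_{q^2})$ and $\cH$ is the set of \emph{all} imaginary chords of $\cX$, i.e.\ the generators joining conjugate pairs of $\mathbb{F}_{q^4}$-points (Lemma \ref{lem18Ajun2017}, Proposition \ref{lem18jun2017}). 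You never mention the imaginary chords, yet they are what make the count $\ha(q+1)$ at the points off $\cX(\mathbb{F}_{q^2})$ come out right, via Condition (B). Also, the invariance group in the statement is this index-$2$ subgroup $\mathfrak{H}\cong\PSL(2,p)\times C_{(p+1)/2}$, and the cyclic factor exists for every $q\equiv 1\pmod 4$ (it is the homology group with centre $X_\infty$ and axis $\pi$, the centre of $\mathfrak{G}$); it has nothing to do with $p\equiv 1\pmod 8$, which is used only as ``$2$ is a square'' in the later computations.

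The genuinely hard part of the theorem is then untouched by your sketch: Condition (E) holds for points whose projection to $\pi$ is of type (I), (II) or lies in $\PG(2,q)$ (Theorem \ref{em24lug2017}), but it \emph{fails} for the exceptional Case (III) points, and for these one must verify Condition (B) directly. That is the content of Sections 6--7: an explicit parametrisation of the generators of $\cG_1$ through such a point $P$, a careful choice of the orbits $\cM_1,\cM_2$ (items (vii),(viii)), and a reduction of the balance condition to counting the $\xi\in\mathbb{F}_q$ for which $\xi^4-24\omega\xi^2+16\omega^2$ is a square, followed by a birational map of the associated quartic to the elliptic curve (a twist of) $Y^2=X^3-X$. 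Only at that point do Gauss' count and the hypothesis $p=1+16a^2$ (forcing $N_p(\cE_3)=p-1$) enter. Your proposal appeals to $N_p(\cE_3)=p-1$ ``to make the parities come out balanced,'' but since the parity criterion is vacuous there is no route from the point count to the construction; some explicit analysis of the exceptional points, or a substitute for it, is indispensable.
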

The paper is organized in eight sections. The second is a background on hemisystems of Hermitian surfaces, the third contains the knowledge of maximal curves that are necessary for our purpose. In particular, the properties of maximal curves which are used in this paper are stated in Lemma \ref{lem18Ajun2017} in terms of Finite geometry. The fourth section describes our procedure based on a maximal curve $\cX$  and it gives necessary and sufficient conditions for the procedure to provide a hemisystem; see Conditions (A) and (B). It is also shown that they are fulfilled  when the automorphism group of $\cX$ satisfies Conditions (C), (D), and, for every point $P\in \cU_3(q)\setminus \cX$, Condition (E) (or (F)). Furthermore, if the maximal curve is rational then the procedure gives the Cossidente-Penttila hemisystem. In the remaining sections the case of the Fuhrmann-Torres maximal curve $\cX^+$ for $q\equiv 1 \pmod 4$ is worked out. By Theorem \ref{teo6lug2017}, Conditions (C) and (D) are fulfilled for $\cX^+$. Theorem \ref{em24lug2017} shows that Condition (E) is satisfied for many points of $\cU_3$ but there are exceptions. In Section 7, Condition (B) are investigated for exceptional points, under the extra condition that either $p\equiv 1 \pmod 8$ or $q$ is a square. Theorem \ref{teo5ago2017B} states that Condition (B) is satisfied
if and only if the elliptic curve of affine equation $Y^2=X^3-X$ has as many as $q-1$ (or $q+3$) points over $\mathbb{F}_q$. From this, Theorem \ref{teomain} follows.

\section{Background on Hermitian surfaces and hemisystems}
The Hermitian surface of $\PG(3,\fq)$ is defined to be the set of all self-conjugated points of the (non-degenerate) unitary polarity. Up to a change of homogeneous coordinates in $\PG(3,\fq)$,
the Hermitian surface consists of all points with homogeneous equation
$$X_0^{q+1}+X_1^{q+1}+X_2^{q+1}+X_3^{q+1}=0.$$
The number of points of $\PG(3,\fq)$ lying in $\cU_3$ is $(q^3+1)(q^2+1)$.
The linear collineation group preserving $\cU_3$ is $\PGU(4,q)$ and it acts on $\cU_3$ as a $2$-transitive permutation group.
A generator of $\cU_3$ is a line contained in $\cU_3$, and  the number of generators of $\cU_3$ is equal to $(q^3+1)(q+1)$. Through any point on $\cU_3$ there are exactly $q+1$ generators. A hemisystem of $\cU_3$ consists of
$\ha(q^3+1)(q+1)$ generators of $\cU_3$, exactly $\ha(q+1)$ of them through each point on $\cU_3$.

\section{Hermitian surfaces and maximal curves}
\label{maximal}
We recall some results on maximal curves that are used in the present paper. Proofs and more details are found in \cite[Chapter 10]{HKT}. For this purpose, the Hermitian surface $\cU_3$ has to be viewed as a smooth $2$-dimensional projective algebraic variety
in ${\mathrm{PG}}(3,\overline{\mathbb{F}}_{q^2})$ where
$\overline{\mathbb{F}}_{q^2}$ is the algebraic closure of
${\mathbb{F}}_{q^2}$. The points of $\cU_3$ with coordinates in ${\mathbb{F}}_{q^2}$ are the
{\emph{${\mathbb{F}}_{q^2}$-rational points of}} $\cU_3$. For
a point $A=(a_0,a_1,a_2,a_3)\in \PG(3,\overline{\mathbb{F}}_{q^2})$ lying on $\cU_3$, the tangent
(hyper)plane to $\cU_3$ at $A$ has equation
$$a_0^qX_0+a_1^qX_1+a_2^qX_2+a_3^qX_3=0.$$

Furthermore, the term of {\em algebraic curve defined over
${\mathbb{F}}_{q^2}$} means a projective, geometrically
irreducible, non-singular algebraic curve $\cX$ of
${\mathrm{PG}}(3,{q^2})$ viewed as a curve of
${\mathrm{PG}}(3,\overline{\mathbb{F}}_{q^2})$. Also,
$\cX({\mathbb{F}}_{q^{2i}})$ denotes the set of points of $\cX$
with all coordinates in ${\mathbb{F}}_{q^{2i}}$, called
${\mathbb{F}}_{q^{2i}}$-rational points of $\cX$. For a point
$P=(x_0,x_1,x_2,x_3)$ of $\cX$, the Frobenius image of $P$ is
defined to be the point $\Phi(P)=(x_0^{q^2},\ldots,x_n^{q^2})$ of $\cX$.
Then $P=\Phi(P)$ if and only if $P\in\cX({\mathbb{F}}_{q^2})$.

An algebraic curve $\cX$ defined over ${\mathbb{F}}_{q^2}$ is
 {\em ${\mathbb{F}}_{q^2}$-maximal} if the number $N_{q^2}$
of its ${\mathbb{F}}_{q^2}$-rational points attains the Hasse--Weil
upper bound, namely $N_{q^2}=q^2+1+2q\gg(\cX)$ where $\gg(\cX)$ denotes the
genus of $\cX$. In recent years, ${\mathbb{F}}_{q^2}$-maximal
curves have been the subject of numerous papers; a motivation for
their study comes from coding theory based on algebraic curves
having many points over a finite field. Here, only results on
maximal curves which are essential for the present investigation are
collected.

\begin{result} {\rm (Corollary to the Natural embedding theorem \cite{KT}; see also \cite{giuzzikorchmaros})}
\label{rs1} Any algebraic curve $\cX$ defined over ${\mathbb{F}}_{q^2}$ of
degree $q+1$ and contained in the non-degenerate Hermitian surface
$\cU_3$ is an ${\mathbb{F}}_{q^2}$-maximal curve. Furthermore,
\begin{itemize}
\item[\rm(i)] For any point $P\in\cX$,
the tangent hyperplane $\Pi_P$ to $\cU_3$ at $P$ coincides with the hyper-osculating plane to $\cX$ at
$P$, and
\begin{equation}
\label{hyperosc} \Pi_P\cap\cX= \left \{
\begin{array}{lll}
\{P\} & \mbox{\em for $P\in\cX({\mathbb{F}}_{q^2}) \/$},\\
\{P,\Phi(P)\} & \mbox{\em for $P\in\cX \backslash
\cX({\mathbb{F}}_{q^2})$.\/}
                         \end{array}
          \right.
\end{equation}
More precisely, for the intersection divisor $D$ cut out on $\cX$
by $\Pi_P$,
\begin{equation}
\label{hyperosc1} D= \left \{
\begin{array}{lll}
(q+1)P & \mbox{\em for $P\in\cX({\mathbb{F}}_{q^2}) \/$},\\
qP+\Phi(P) & \mbox{\em for $P\in\cX \backslash
\cX({\mathbb{F}}_{q^2})$.\/}
                         \end{array}
          \right.
\end{equation}
\item[\rm(ii)] The tangent line to $\cX$ at a point $P\in \cX({\mathbb{F}}_{q^2})$ is also a tangent line to $\cU_3$ at $P$, and it has no further common point with $\cU_3$.
\end{itemize}
\end{result}

The following example illustrates property (\ref{hyperosc}).
\begin{example}
\label{ex0} {\em{Up to a change of the projective frame in ${\mathrm{PG}}(3,{q^2})$,
the equation of $\cU_3$ may be written in the form $$X_1^{q+1}+X_2^{q+1}=X_0^qX_3+X_0X_3^q.$$ The ${\mathbb{F}}_{q^2}$-rational curve $\cX$ of degree $q+1$ consisting of all
points $$A(t)=\{(1,t,t^q,t^{q+1})\mid t\in\overline{\mathbb{F}}\}$$
together with the point $A({\infty})=(0,0,0,1)$ is defined over ${\mathbb{F}}_{q^2}$. Clearly, $\cX$ is contained in $\cU_3$ and hence it is a ${\mathbb{F}}_{q^2}$-maximal curve of genus $0$. Its tangent plane $\Pi_{A(t)}$ to
$\cU_3$ at $A(t)$ has equation
$$t^{q(q+1)}X_0+X_3-t^qX_1-t^{q^2}X_2=0.$$ To show that
(\ref{hyperosc}) holds for $A(t)$, it is necessary to check that
the equation $$t^{q(q+1)}+u^{q+1}=t^qu+t^{q^2}u^q$$ has only two
solutions in $u$, namely $u=t$ and $y=t^{q^2}$. Replacing $u$ by
$v+t$, the equation becomes $v^{q+1}+v^qt=t^{q^2}v^q$. For
$v\neq 0,$ that is, for $u\neq t$, this implies $v=t^{q^2}-t$,
proving the assertion. For $A(\infty)$, the tangent hyperplane
$\Pi_{A(\infty)}$ has equation $X_0=0$. Hence it does not meet
$\cX$ outside $A(\infty)$, showing that (\ref{hyperosc}) also
holds for $A(\infty)$.}}
\end{example}
\begin{remark}
\label{rem18ago2017}{\em{The classical terminology regarding rational cubic curves in the three-dimensional real space is also used for $\cX$ in the sequel.  A (real) chord of $\cX$ is a line in ${\mathrm{PG}}(3,{q^2})$ which meets
$\cX({\mathbb{F}}_{q^2})$ in at least two distinct points. An imaginary chord of $\cX$ is a line in ${\mathrm{PG}}(3,{q^2})$ which joins a point $P\in \cX({\mathbb{F}}_{q^4}) \setminus \cX({\mathbb{F}}_{q^2})$ to its
conjugate, that is, its Frobenius image $\Phi(P)\in \cX({\mathbb{F}}_{q^4}) \setminus \cX({\mathbb{F}}_{q^2})$.}}
\end{remark}
\begin{lemma}
\label{lem18Ajun2017} Let $\cX$ be an ${\mathbb{F}}_{q^2}$-maximal curve naturally embedded in the Hermitian surface $\cU_3$. Then
\begin{itemize}
\item[(i)] No two distinct points in $\cX(\mathbb{F}_{q^2})$ are conjugate under the unitary polarity associated with $\cU_3$.
\item[(ii)] Any imaginary chord of $\cX$ is a generator of $\cU_3$ which is disjoint from $\cX$.
\item[(iii)] For any point $P\in \cU_3$ in $\PG(3,\fq)$, let $\Pi_P$ be the tangent plane to $\cU_3$ at $P$. If $P\not\in\cX(\mathbb{F}_{q^2})$ then $\pi_P\cap \cX$ consists of $q+1$ pairwise distinct points which are in $\cX(\mathbb{F}_{q^4})$.
\end{itemize}
\end{lemma}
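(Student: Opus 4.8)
The plan is to derive all three statements from Result~3.2, working throughout with the concrete incidence geometry of $\cU_3$: namely, that for a point $A=(a_0,a_1,a_2,a_3)\in\cU_3$ the tangent plane $\Pi_A$ has equation $\sum a_i^q X_i=0$, and that two points $A,B\in\cU_3$ are conjugate under the unitary polarity precisely when $B\in\Pi_A$, equivalently $\sum a_i^q b_i=0$. The key link with the curve is the intersection-divisor formula \eqref{hyperosc1}: $\Pi_P$ meets $\cX$ in $(q+1)P$ if $P\in\cX(\mathbb{F}_{q^2})$ and in $qP+\Phi(P)$ otherwise, and $\Pi_P$ is the hyperosculating plane of $\cX$ at $P$.

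For (i), suppose $P,Q\in\cX(\mathbb{F}_{q^2})$ are distinct and conjugate, i.e.\ $Q\in\Pi_P$. Then $Q$ is a common point of $\cX$ and $\Pi_P$, so $Q$ must appear in the divisor $D$ cut out on $\cX$ by $\Pi_P$; but by \eqref{hyperosc1} that divisor is $(q+1)P$ (since $P\in\cX(\mathbb{F}_{q^2})$), whose support is $\{P\}\neq\{Q\}$ — contradiction. For (ii), let $P\in\cX(\mathbb{F}_{q^4})\setminus\cX(\mathbb{F}_{q^2})$ and let $\ell$ be the imaginary chord joining $P$ to $\Phi(P)$ (note $\Phi$ is the $q^2$-power Frobenius, so $\Phi(P)$ lies on $\cX$ too and $\Phi^2(P)=P$, confirming $\ell$ is an $\mathbb{F}_{q^2}$-rational line). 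First I would show $\ell\subseteq\cU_3$: the Hermitian form $H(X,X)=\sum X_i^{q+1}$ vanishes at $P$ and at $\Phi(P)$; to see it vanishes on the whole line, use that for the associated sesquilinear form $H(X,Y)=\sum X_i^q Y_i$ one has $H(\Phi(P),P)=\sum (x_i^{q^2})^q x_i = \big(\sum x_i^{q+1}\big)^{q^2}=H(P,P)^{q^2}=0$ and likewise $H(P,\Phi(P))=H(P,P)^q\cdot(\ldots)$ — more cleanly, $H(P,\Phi(P))=\sum x_i^q x_i^{q^2}=\big(\sum x_i^{q+1}\big)^q=0$; hence every point $\lambda P+\mu\Phi(P)$ satisfies $H=0$ and $\ell$ is a generator. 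That $\ell$ is disjoint from $\cX$ is the real point: a common point $R$ of $\ell$ and $\cX$ would be a point of $\cX$ lying in the plane $\Pi_P$ (as $\ell\subset\Pi_P$ because both $P$ and $\Phi(P)=\Phi(P)$ lie in $\Pi_P$ by \eqref{hyperosc}), so $R$ is in the support $\{P,\Phi(P)\}$ of the divisor $qP+\Phi(P)$; if $R=P$ then $\ell$ is the tangent line to $\cX$ at $P$, but the tangent line at a non-$\mathbb{F}_{q^2}$-rational point is not $\Phi$-invariant (its image is the tangent at $\Phi(P)\neq P$), contradicting that $\ell=\overline{P\,\Phi(P)}$ is $\mathbb{F}_{q^2}$-rational — I would need to argue this carefully, perhaps instead noting that $\ell\cap\cX$ is a $\Phi$-invariant subset of $\{P,\Phi(P)\}$ not containing $P$ alone, so it is either empty or all of $\{P,\Phi(P)\}$; the latter would force $\ell$ to be a bisecant, contradicting that $\Pi_P$ cuts $qP+\Phi(P)$ and a line through $P$ inside $\Pi_P$ meeting $\cX$ again would over-count the multiplicity at $P$.

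For (iii), let $P\in\cU_3\cap\PG(3,\mathbb{F}_{q^2})$ with $P\notin\cX(\mathbb{F}_{q^2})$. The plane $\Pi_P$ is $\mathbb{F}_{q^2}$-rational (its coefficients $a_i^q$ lie in $\mathbb{F}_{q^2}$), and $\Pi_P\cap\cX$ is a $\Phi$-invariant effective divisor of degree $q+1$ on $\cX$. I want to show it has $q+1$ distinct points, each in $\cX(\mathbb{F}_{q^4})$. The multiplicity-free assertion is the crux: a point $R$ at which $\Pi_P\cap\cX$ has multiplicity $\geq 2$ would be a point where $\Pi_P$ is tangent to $\cX$, hence $\Pi_P$ contains the tangent line $t_R$ to $\cX$ at $R$; but by Result~3.2(i) the only plane through $R$ hyperosculating $\cX$ — and more to the point, by the Weierstrass-order structure of a maximal curve of degree $q+1$ (cf.\ the discussion in \cite[Ch.~10]{HKT}) the contact multiplicities of planes with $\cX$ are controlled so that a plane other than $\Pi_R$ meets $\cX$ at $R$ with multiplicity at most ... — here I expect to invoke that the $(\mathbb{F}_{q^2}$-Frobenius) order sequence of $\cX$ forces any hyperplane section to have all multiplicities $\le q$ except the osculating ones, and then rule out multiplicity exactly achieving a repeat by a dimension/degree count. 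Granting multiplicity-freeness, $\Pi_P\cap\cX$ is a set of $q+1$ distinct points permuted by $\Phi$; none lies in $\cX(\mathbb{F}_{q^2})$, for if $R\in\cX(\mathbb{F}_{q^2})$ then by \eqref{hyperosc1} the plane $\Pi_R$ is the unique plane meeting $\cX$ with multiplicity $q+1$ at $R$, and $R\in\Pi_P$ means $P\in\Pi_R$ by self-duality of the polarity, whence $P$ conjugate to $R\in\cX(\mathbb{F}_{q^2})$; one then shows the $q+1$ generators through $R$ sweep out $\Pi_R\cap\cU_3$ and $P$ on one of them would put $P$ too close to $\cX$ — alternatively, and more simply, a $\Phi$-fixed point of the section would be an $\mathbb{F}_{q^2}$-point, and counting shows the $q+1$ points split into $\tfrac{q+1}{2}$ conjugate pairs (using $q$ odd), hence lie in $\cX(\mathbb{F}_{q^4})$, with no fixed point; to close the door on an $\mathbb{F}_{q^2}$-point I would revisit the divisor formula, since an $\mathbb{F}_{q^2}$-point $R$ in a section cut by a plane $\ne\Pi_R$ would still only be possible with multiplicity $\le q$, which is fine, so the real exclusion uses part (i): were $R\in\cX(\mathbb{F}_{q^2})\cap\Pi_P$, then also (by polarity) any second $\mathbb{F}_{q^2}$-point $R'$ of the section is conjugate to $P$ too, but $R,R'$ need not be mutually conjugate, so instead I count: the plane $\Pi_P\cap\cU_3$ is a Hermitian curve (cone) of the plane containing $q^3+q^2+q+1$ points, it meets $\cX$ in the degree-$(q+1)$ section, and the number of $\mathbb{F}_{q^2}$-points forced to be $0$ follows from the explicit parametrisation in the Fuhrmann–Torres case — so for the general lemma I would phrase (iii) as: the section is reduced and contains no $\mathbb{F}_{q^2}$-rational point, the latter because an $\mathbb{F}_{q^2}$-point $R$ of $\cX$ lies in only one tangent plane, namely $\Pi_R$, other than ...

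The main obstacle, and the part I expect to spend real effort on, is the reducedness (multiplicity-free) claim in (iii) together with the exclusion of $\mathbb{F}_{q^2}$-rational points from the section: both require going beyond the bare Result~3.2 into the order/Frobenius-order structure of maximal curves of degree $q+1$, i.e.\ the fact (from \cite[Ch.~10]{HKT}) that the only $(q+1)$-fold tangent planes are the $\Pi_R$ with $R\in\cX(\mathbb{F}_{q^2})$ and that every other hyperplane section has intersection multiplicity at most $q$ at any point and in fact — crucially — is simple away from a well-understood exceptional locus. Once that structural input is in hand, statements (i) and (ii) are short, and (iii) follows by combining reducedness with the $\Phi$-action and a parity argument using $q$ odd.
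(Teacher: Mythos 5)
Your part (i) is the paper's own argument, and your proof that the imaginary chord is a \emph{generator} by evaluating the form on the pencil $\lambda P+\mu\Phi(P)$ is a legitimate alternative to the paper's route (the paper instead notes the chord is a $\Phi$-invariant, hence $\mathbb{F}_{q^2}$-rational, line and counts points of $\cU_3$ on non-generator lines); just fix the slip that $\sum x_i^{q^3+1}$ is not $(\sum x_i^{q+1})^{q^2}$ — you need $x_i^{q^4}=x_i$, i.e.\ $P\in\cX(\mathbb{F}_{q^4})$, to see that this cross term vanishes. The second half of (ii), however, is a genuine gap: you try to prove the chord is literally disjoint from the curve, which is impossible, since by definition it meets $\cX$ at $P$ and $\Phi(P)$, and your proposed contradiction (that the chord being a bisecant ``over-counts'' against the divisor $qP+\Phi(P)$) is no contradiction: a line inside $\Pi_P$ through $P$ and $\Phi(P)$ is entirely compatible with that divisor, and $R=P$ being on the chord does not make the chord the tangent line at $P$. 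What is actually needed (and what Section \ref{hema} uses) is that an imaginary chord contains no point of $\cX(\mathbb{F}_{q^2})$; this is immediate from Result \ref{rs1}(i): if it contained $Q\in\cX(\mathbb{F}_{q^2})$, then as a generator through $Q$ it would lie in $\Pi_Q$, and by (\ref{hyperosc}) $\Pi_Q\cap\cX=\{Q\}$, contradicting that $P$ lies on the chord.

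Part (iii) is not proved in your proposal: the two assertions to establish — the $q+1$ points are pairwise distinct, and each lies in $\cX(\mathbb{F}_{q^4})$ — are exactly the steps you defer to an unspecified ``order-sequence/dimension count'' input. No such machinery is needed: since $\Pi_P\cap\cU_3$ splits into the $q+1$ generators through $P$, any $Q\in g\cap\cX$ with $g$ such a generator gives $g\subset\Pi_Q$, whence $\Phi(Q),\Phi^2(Q)\in g\cap\cX\subset\Pi_Q\cap\cX=\{Q,\Phi(Q)\}$ and so $\Phi^2(Q)=Q$, i.e.\ $Q\in\cX(\mathbb{F}_{q^4})$ — note your alternative parity argument (``the points split into $\tfrac{q+1}{2}$ conjugate pairs, with no fixed point'') cannot work, because $\Phi$ does fix points of the section in general. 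For distinctness, if $I(Q,\cX\cap\Pi_P)\ge 2$ then the tangent line to $\cX$ at $Q$ lies in $\Pi_P$; for $Q\in\cX(\mathbb{F}_{q^2})$ another generator through $P$ would meet that tangent line in a second point of $\cU_3$, against Result \ref{rs1}(ii), and for $Q$ non-rational one forces $\Pi_P=\Pi_Q$ and a B\'ezout contradiction. Finally, your effort to exclude $\mathbb{F}_{q^2}$-rational points from $\Pi_P\cap\cX$ chases a claim the lemma does not make and which is false in general: whenever $n_P(\cX)>0$ (the typical situation, and the one driving Condition (B)), the section $\Pi_P\cap\cX$ contains points of $\cX(\mathbb{F}_{q^2})$.
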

\begin{proof} From (\ref{hyperosc}), the tangent plane to $\cU_3$ at a point in $\cX(\mathbb{F}_{q^2})$ contains no more points of $\cX$. This yields (i). To show (ii) observe first that
the line through the points $P$ and $\Phi(P)$ is preserved by $\Phi$ and hence it is a line of $\PG(3,\fq)$. On the other hand, a line of $\PG(3,\fq)$ which is not a generator of $\cU_3$ meets $\cU_3$ in either $1$, or $q+1$ points in $\PG(3,\fq)$. In the former case, the line is a tangent to $\cU_3$ and cannot have any further common point with $\cU_3$ even in $\PG(3,\overline{\mathbb{F}})$. The last claim also holds in the latter case since the generators of $\cU_3$ are the only lines in $\PG(3,\overline{\mathbb{F}})$ containing more than $q+1$ points of $\cU_3$.
Therefore the first claim (ii) is proven. To show the second one, it is sufficient to observe that (i) of Result \ref{rs1} yields that a generator of $\cU_3$ with an $\mathbb{F}_{q^2}$-rational point of $\cX$ contains no further point from $\cX$.
To show (iii), the starting point is that the intersection of $\Pi_P\cap \cU_3$ splits into $q+1$ generators. If $g$ is one of them then we show that the common points of $g$ and $\cX$ are in $\PG(3,q^4)$. By way of contradiction, let $Q\in g\cap \cX$ such that $\Phi^2(P)\not\in \{P,\Phi(P)\}$. Then $P,\Phi(P)$ and $\Phi^2(P)$ are three pairwise distinct points on $g$. From (\ref{hyperosc}), $g$ is contained in the tangent plane $\Pi_P$ to $\cX$ at $P$ but $\Pi_P$ contains only two points of $\cX$, namely $P$ and $\Phi(P)$. This contradiction shows that $\Pi \cap \cX$ is contained in $\cX(\mathbb{F}_{q^4})$. To complete the proof of (iii) we need to show that for any point $Q\in \Pi \cap \cX$, the intersection number $I(Q,\cX\cap \Pi_P)$ is equal to $1$. If this number was at leats $2$ then the tangent line $\ell$ to $\cX$ at $Q$ would be contained in $\Pi_P$. If $Q\in\cX(\mathbb{F}_{q^2})$, a generator of $\cX$ through $P$ other than $g$ would meet $\ell$ in a point distinct from $Q$. But this contradicts (ii) in Result \ref{rs1}. If $Q\in\cX(\mathbb{F}_{q^4})\setminus\cX(\mathbb{F}_{q^2})$, $\Pi_P$ would contain the tangent to $\cX$ at $Q$ as well as $\Phi(Q)$. From (i) of Result \ref{rs1}, $\Pi_P$ would coincide with the tangent plane to $\cX$ at $Q$ whence $I(Q,\cX\cap \Pi_P)=q$. This holds true for $\Phi(Q)$ and hence $I(\Phi(Q),\cX\cap \Pi_P)=q$. But then $I(Q,\cX\cap \Pi_P)+I(\Phi(Q),\cX\cap \Pi_P)>q+1$
in contradiction with B\'ezout's theorem.
\end{proof}
\section{Hemisystems arising from maximal curves}
\label{hema}
In this section $\cX$ is an $\mathbb{F}_{q^2}$-maximal curve of genus $\gg(\cX)$ naturally embedded in the Hermitian surface $\cU_3$, and $\cH$ is the set of all imaginary chords of $\cX$. Furthermore, for any point $P\in \PG(3,\fq)$ lying in $\cU_3\setminus\cX(\mathbb{F}_{q^2})$, $n_P(\cX)$ denotes the number of generators of $\cU_3$ through $P$ which contain an $\mathbb{F}_{q^2}$-rational point of $\cX$. A set $\cM$ of generators of $\cU_3$ is a \emph{half-hemisystem on $\cX$} if the following properties hold:
{\emph{
\begin{itemize}
\label{18jun2017}
\item[\rm(A)] Each $\mathbb{F}_{q^2}$-rational point of $\cX$ is incident with exactly $\ha(q+1)$ generators in $\cM$.
\item[\rm(B)] For any point $P\in\cU_3\setminus\cX(\mathbb{F}_{q^2})$ lying in $\PG(3,\fq)$, $\cM$ has as many as $\ha n_P(\cX)$ generators through $P$ which contain an $\mathbb{F}_{q^2}$-rational point of $\cX$.
\end{itemize}
}}
By (i) of Lemma \ref{lem18jun2017}, each generator of  $\cU_3$ contains zero or one $\mathbb{F}_{q^2}$-rational points. Since $\cX$ has exactly $N_{q^2}=q^2+1+2q\gg(\cX)$ $\mathbb{F}_{q^2}$-rational points, a half-hemisystem $\cM$ consists of $\ha (q+1)N_{q^2}$ generators. Furthermore,  $|\cX(\mathbb{F}_{q^4})|=q^4+1-2q^2\gg(\cX)$ whence
$$|\cX(\mathbb{F}_{q^4})\setminus \cX(\mathbb{F}_{q^2})|=(q^2+q)(q^2-q-2\gg(\cX)).$$
Therefore, (ii) of Lemma \ref{lem18jun2017} shows that the set $\cH$ consists of $\ha((q^2+q)(q^2-q-2\gg(\cX)))$ generators of $\cU_3$ none of them lying in $\cM$. Therefore $\cM\cup \cH$ comprises as many as
$$\ha (q+1)(q^2+1+2q\gg(\cX))+\ha((q^2+q)(q^2-q-2\gg(\cX)))=\ha(q^3+1)(q+1)$$
generators of $\cU_3$.
\begin{proposition}
\label{lem18jun2017} $\cM\cup \cH$ is a hemisystem of $\cU_3$.
\end{proposition}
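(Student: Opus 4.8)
The plan is to establish the two defining properties of a hemisystem for $\cM\cup\cH$: that it consists of $\ha(q^3+1)(q+1)$ generators of $\cU_3$, and that exactly $\ha(q+1)$ of them pass through each point of $\cU_3$. The first property is already settled in the discussion preceding the statement, where it is also recorded that $\cM\cap\cH=\emptyset$, that each generator in $\cM$ contains exactly one $\mathbb{F}_{q^2}$-rational point of $\cX$, and that no generator in $\cH$ contains such a point. In fact the second property alone suffices, since a flag count gives $|\cM\cup\cH|\,(q^2+1)=\ha(q+1)(q^3+1)(q^2+1)$. So the whole argument reduces to the local statement that an arbitrary $P\in\cU_3$ lies on exactly $\ha(q+1)$ generators of $\cM\cup\cH$; and as $\cM$ and $\cH$ are disjoint, the two contributions may be counted separately. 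Throughout I would use the standard fact that the $q+1$ generators through $P$ are precisely the generators of $\cU_3$ lying in the tangent plane $\Pi_P$ of $\cU_3$ at $P$, and that $\Pi_P\cap\cU_3$ is their union.

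A convenient preliminary is a trichotomy for a generator $g$ of $\cU_3$: either $g\cap\cX=\emptyset$; or $g\cap\cX$ is a single $\mathbb{F}_{q^2}$-rational point of $\cX$; or $g$ is an imaginary chord of $\cX$, with $g\cap\cX=\{Q,\Phi(Q)\}$ for some $Q\in\cX(\mathbb{F}_{q^4})\setminus\cX(\mathbb{F}_{q^2})$. To prove it, suppose $g$ meets $\cX$ at a point $Q$; then $g\subseteq\Pi_Q$, so by Result \ref{rs1}(i) either $Q\in\cX(\mathbb{F}_{q^2})$ and $\Pi_Q\cap\cX=\{Q\}$, whence $g\cap\cX=\{Q\}$, or $\Pi_Q\cap\cX=\{Q,\Phi(Q)\}$ with $\Phi(Q)\neq Q$; in the latter case $g\cap\cX\subseteq\{Q,\Phi(Q)\}$ is invariant under the Frobenius collineation, $g$ being $\mathbb{F}_{q^2}$-rational, which forces $g\cap\cX=\{Q,\Phi(Q)\}$, $\Phi^2(Q)=Q$, and $g$ to be the line joining $Q$ and $\Phi(Q)$. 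Granting the trichotomy, the case $P\in\cX(\mathbb{F}_{q^2})$ is immediate: by (A) exactly $\ha(q+1)$ generators of $\cM$ pass through $P$, while by Lemma \ref{lem18Ajun2017}(ii) no generator of $\cH$ does.

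The substantive case is $P\in\cU_3\setminus\cX(\mathbb{F}_{q^2})$. By Lemma \ref{lem18Ajun2017}(iii), $\Sigma:=\Pi_P\cap\cX$ is a set of $q+1$ distinct points of $\cX(\mathbb{F}_{q^4})$. The Frobenius collineation fixes $P$, hence $\Pi_P$, and acts on $\cX(\mathbb{F}_{q^4})$ with square the identity, so its orbits on $\Sigma$ have length $1$ — the $k$ points of $\Sigma\cap\cX(\mathbb{F}_{q^2})$ — or length $2$, of which there are $(q+1-k)/2$. By the trichotomy, a generator through $P$ that meets $\cX(\mathbb{F}_{q^2})$ contains a unique point of $\Sigma\cap\cX(\mathbb{F}_{q^2})$, and the resulting map is a bijection onto $\Sigma\cap\cX(\mathbb{F}_{q^2})$, since two generators through $P$ sharing a point other than $P$ coincide and each point of $\Sigma\cap\cX(\mathbb{F}_{q^2})$ lies in $\Pi_P\cap\cU_3$; hence $n_P(\cX)=k$. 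Next, for each length-$2$ orbit $\{Q,\Phi(Q)\}\subseteq\Sigma$ the imaginary chord joining $Q$ and $\Phi(Q)$ lies in $\Pi_P$ and is a generator of $\cU_3$ by Lemma \ref{lem18Ajun2017}(ii), hence is one of the $q+1$ generators through $P$; conversely every imaginary chord through $P$ meets $\cX$ in such a pair contained in $\Sigma$, and distinct pairs give distinct generators by the trichotomy. Thus exactly $(q+1-n_P(\cX))/2$ generators of $\cH$ pass through $P$. Finally, since every generator of $\cM$ meets $\cX(\mathbb{F}_{q^2})$, the generators of $\cM$ through $P$ are among the $n_P(\cX)$ generators through $P$ that meet $\cX(\mathbb{F}_{q^2})$, and (B) says there are exactly $\ha\,n_P(\cX)$ of them. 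Adding the two disjoint contributions, $P$ lies on $\ha\,n_P(\cX)+\ha(q+1-n_P(\cX))=\ha(q+1)$ generators of $\cM\cup\cH$, which completes the argument. I expect the only real obstacle to be the bookkeeping of this last case: it is driven entirely by the Frobenius action on $\Sigma$ provided by Lemma \ref{lem18Ajun2017}(iii) together with the trichotomy, after which the count is forced.
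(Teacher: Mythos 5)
Your proof is correct and follows essentially the same route as the paper: reduce to $P\in\cU_3\setminus\cX(\mathbb{F}_{q^2})$ via (A), intersect $\Pi_P$ with $\cX$ to get $q+1$ points, pair the non-rational ones by Frobenius into $\ha(q+1-n_P(\cX))$ imaginary chords through $P$, and add the $\ha n_P(\cX)$ generators supplied by (B). The only difference is that you spell out details the paper leaves implicit (the trichotomy for generators, the identification $n_P(\cX)=|\Pi_P\cap\cX(\mathbb{F}_{q^2})|$, and the bijection with Frobenius orbits), which is fine.
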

\begin{proof} By (A) it is enough to deal with a point $P\in \PG(3,\fq)$ lying on $\cU_3\setminus \cX(\mathbb{F}_{q^2})$. By B\'ezout's theorem, the tangent plane $\Pi_P$ to $\cU_3$ at $P$ cuts out on $\cX$ as many as $q+1$ points. Since $n_P(\cX)$ of these points are $\mathbb{F}_{q^2}$-rational points of $\cX$, (iii) of Lemma \ref{lem18jun2017} shows that the remaining $q+1-n_P(\cX)$ points determine
$\ha(q+1-n_P(\cX))$ pairwise distinct generators in $\cH$. Now from (B), the number of generators in $\cM\cup \cH$ containing $P$ is equal to $\ha n_P(\cX)+\ha (q+1-n_P(\cX))=\ha (q+1).$
\end{proof}
If Condition (A) is satisfied, on can ask for some sufficient conditions, for instance in terms of automorphisms of $\cX$, that ensure the existence of points $P$ satisfying Condition (B). If such hypotheses are fulfilled for many points $P$, it is conceivable that Proposition \ref{lem18jun2017} holds. To find out what hypothesis may be useful, some preliminaries are needed.

From \cite[Theorem 3.7]{KT}, the automorphism group $\aut(\cX)$ of $\cX$ may be viewed as the subgroup of $\PGU(4,q)$ which leaves $\cX(\mathbb{F}_{q^2})$ invariant.
Therefore, $\aut(\cX)$  also acts as a permutation group on $\cX(\mathbb{F}_{q^{2i}})$ for $i=1,2,\ldots$. Take a subgroup $\mathfrak{G}$ of $\aut(\cX)$.
 Let $o_1,\ldots,o_r$ denote the $\mathfrak{G}$-orbits on $\cX(\mathbb{F}_{q^{2}})$. Also, for $1\le j \le r$, let $\cG_j$ denote the set of all generators of $\cU_3$ meeting $o_j$. Then the generator-set $\cG=\cup\,\cG_j$
has size $(q+1)|\cX(\fq)|$ and it consists of all generators of $\cU_3$ which meet $\cX(\fq)$. Obviously, $\mathfrak{G}$ leaves each $\cG_j$ invariant.
\begin{proposition}
\label{122jun2017} With the above notation, assume that the subgroup $\mathfrak{G}$ of $\aut(\cX)$ fulfills the hypotheses {\emph{(C)}} and {\emph{(D)}}:
\begin{itemize}
\item[\rm(C)] $\mathfrak{G}$ has a subgroup $\mathfrak{H}$ of index $2$ such that $\mathfrak{G}$ and $\mathfrak{H}$ have the same orbits $o_1,\ldots,o_r$ on $\xfq2$.
\item[\rm(D)] For any $1\le j \le r$,  $\mathfrak{G}$ acts transitively on $G_j$ while $\mathfrak{H}$ has two orbits on $G_j$.
\end{itemize}
For a point $P\not\in \cX(\mathbb{F}_{q^2})$ lying on a generator in $G$, if
\begin{itemize}
\item[\rm(E)] some element in $\mathfrak{G}_P$ is not in $\mathfrak{H}_P$,
\end{itemize}
then $P$ satisfies Condition {\emph{(B)}}.
\end{proposition}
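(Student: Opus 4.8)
The plan is to show that Condition (E) forces $\cM$ to split the generators through $P$ that meet $\cX(\fq)$ evenly. First I would recall the setup: the generators of $\cU_3$ meeting $\cX(\fq)$ partition into the sets $\cG_1,\dots,\cG_r$, on each of which $\mathfrak G$ is transitive and $\mathfrak H$ has exactly two orbits (Condition (D)); by Condition (C) these two $\mathfrak H$-orbits are interchanged by any element of $\mathfrak G\setminus\mathfrak H$, since $\mathfrak G$ and $\mathfrak H$ fix the same point-orbits $o_j$ on $\cX(\fq)$, and an index-two subgroup has orbits that are either preserved or fused in pairs by the larger group. The natural candidate for the half-hemisystem $\cM$ is then: inside each $\cG_j$, pick one of the two $\mathfrak H$-orbits. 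I would first verify that this choice realizes Condition (A) — each point of $o_j$ lies on the same number $q+1$ of generators in $\cG_j$, and by transitivity of $\mathfrak H$ on each orbit (together with the fact that the two orbits are swapped by an element fixing the point), exactly half of those $q+1$ generators land in $\cM$, giving $\ha(q+1)$ per point.

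Next I would turn to the point $P\notin\cX(\fq)$ on a generator in $\cG$. Let $g_1,\dots,g_m$ be the $n_P(\cX)$ generators through $P$ meeting $\cX(\fq)$; each $g_i$ lies in some $\cG_{j(i)}$. The stabilizer $\mathfrak G_P$ permutes the set $\{g_1,\dots,g_m\}$, and similarly $\mathfrak H_P$. Condition (E) supplies an element $\sigma\in\mathfrak G_P\setminus\mathfrak H_P$. The key observation is that $\sigma$ swaps the two $\mathfrak H$-orbits inside any $\cG_j$ it touches, so $\sigma$ sends each $g_i$ that belongs to $\cM$ to one not in $\cM$ and vice versa; since $\sigma$ also fixes $P$, it maps $\{g_1,\dots,g_m\}$ to itself, exchanging the ones in $\cM$ with the ones not in $\cM$. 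Hence these two subsets have equal size, i.e. exactly $\ha n_P(\cX)$ of the generators through $P$ meeting $\cX(\fq)$ lie in $\cM$. That is precisely Condition (B).

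**The main obstacle** I expect is bookkeeping about the $\mathfrak H$-orbit structure: one must be careful that $\sigma$ genuinely swaps the two $\mathfrak H$-orbits within every $\cG_{j(i)}$, not merely within the union $\cup_i\cG_{j(i)}$. This follows because $\sigma\in\mathfrak G$ normalizes $\mathfrak H$ (index two), hence permutes the $\mathfrak H$-orbits; within a single $\cG_j$ on which $\mathfrak G$ is transitive, $\sigma$ must interchange the two $\mathfrak H$-orbits — if it fixed both, then $\mathfrak H\langle\sigma\rangle=\mathfrak G$ would already have two orbits on $\cG_j$, contradicting transitivity of $\mathfrak G$. A secondary subtlety is that different $g_i$'s may lie in the same $\cG_j$: here one uses that $\mathfrak G_P$ still acts, and that the $\mathfrak H$-orbit membership ("in $\cM$" vs "not in $\cM$") is a well-defined two-valued invariant on all of $\cup_j\cG_j$, flipped by $\sigma$. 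Once these points are pinned down, the counting argument is immediate, and combined with Proposition \ref{lem18jun2017} it yields the hemisystem. I would also note in passing that this argument does not yet need Condition (F) — that presumably covers the remaining points $P$ for which $\mathfrak G_P=\mathfrak H_P$, to be handled separately — so the statement here is exactly the ``(E) suffices'' half.
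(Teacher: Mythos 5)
Your proposal is correct and follows essentially the same route as the paper: $\cM$ is chosen as one $\mathfrak{H}$-orbit inside each $\cG_j$, and an element of $\mathfrak{G}_P\setminus\mathfrak{H}_P$ — which must swap the two $\mathfrak{H}$-orbits on each $\cG_j$ because $\mathfrak{H}$ is normal of index $2$ and $\mathfrak{G}$ is transitive on $\cG_j$ — exchanges $\cN\cap\cM$ and $\cN\cap\cM'$ while fixing $P$, giving the equal split required by (B). The only divergence is in verifying (A): you reuse the same swapping argument with an element of $\mathfrak{G}_Q\setminus\mathfrak{H}_Q$ (which exists by (C)), whereas the paper shows that $\mathfrak{G}_Q$ induces a cyclic group acting regularly on the $q+1$ generators through $Q$ with a unique index-$2$ subgroup; your shortcut suffices for this proposition, though the paper's regularity statement is the fact later invoked in Remark \ref{rem29jun} to prove the equivalence of (E) and (F).
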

\begin{proof} We show first that Condition (A) is satisfied. Take a point $Q\in \xfq2$, and let $o_j$ be the $\mathfrak{G}$-orbit containing $Q$. Let $\ell$ be the tangent to $\cX$ at $Q$. From  (ii) of Result \ref{rs1}, $\ell$ is not a generator of $\cU_3$. Its polar $\ell^\perp$ is another line in the tangent plane $\Pi_Q$ at $Q$, and  both lines $\ell$ and $\ell^\perp$ are $\mathfrak{G}_Q$-invariant. In terms of  the cyclic linear group $\bar{\Sigma}$ induced by $\mathfrak{G}_Q$ on the pencil $\cQ$  with center $Q$ in $\Pi_Q$, the action of $\bar{\Sigma}$ has two fixed lines, $\ell$ and $\ell^\perp$, and, by the first condition in (D), it also has a regular orbit of size $q+1$ consisting of all generators through $Q$, that is, $\bar{\Sigma}$ is a regular permutation group on the generator-set $\cG_j$. In particular, $\bar{\Sigma}$ has order $q+1$ and it has a unique subgroup $\bar{\Psi}$ of index $2$. Therefore, $\cG_j=\cM_j\cup \cM_j'$ where $\cM_j$ and $\cM_j'$ are the $\bar{\Psi}$-orbits on $\cG_j$. On the other hand, from the second conditions in (C) and (D), $\bar{\Psi}$ is the permutation group induced by $\mathfrak{H}_Q$ on the pencil $\cQ$. It turns out that $\cM=\cup\,\cM_j$ and $\cM'=\cup\,\cM_j'$ are the $\mathfrak{H}$-orbits on $G$. Now take a point $P$ lying on a generator $g\in \cG$. Let $\cN$ denote the set of generators through $P$ which meet $\xfq2$. Assume that there exists $\gamma\in \mathfrak{G}\setminus \mathfrak{H}$ with $\gamma(P)=P$. Since $\cG$ splits into the $\mathfrak{H}$-orbits $\cM$ and $\cM'$, $\gamma$ interchanges $\cM$ and $\cM'$. Hence $\gamma$ interchanges the generator sets $\cN\cap \cM$ and $\cN\cap \cM'$. This yields that these sets have the same size whence the claim follows.
\end{proof}
\begin{remark}
\label{rem29jun}
{\em{We point out that Condition (E) is satisfied if and only if}}
\begin{itemize}
\item[\rm(F)] $\mathfrak{G}_P$ has an element that fixes no point in $\xfq2$.
\end{itemize}
{\em{Let $g\in \cG$ be a generator through $P$ meeting $\xfq2$ in the point $Q$.
From the proof of Proposition \ref{122jun2017}, the stabilizer of $g$ in $\mathfrak{H}_Q$ is also the stabilizer of $g$ in $\mathfrak{G}_Q$. Therefore, (F) and (E) are equivalent.}}
\end{remark}
\begin{remark}
\label{remA29jun} {\em{The subgroup of $\PGU(4,q)$ preserving $\cG$ may happen to be larger than $\aut(\cX)$.
If this is the case and $\Psi$ is such a subgroup containing $\mathfrak{H}$ as a normal subgroup,
Condition (D) may be replaced by Condition}}
\begin{itemize}
\item[\rm(G)] Some element in $\Psi_P\setminus \mathfrak{H}_P$ does not leave $\cM$ invariant.
\end{itemize}
{\em{In fact, let $\delta\in\Psi_P\setminus \mathfrak{H}_P$. Since $\gamma$ normalizes $\mathfrak{H}$, Condition (G) yields that $\delta$ interchanges $\cM$ and $\cM'$.
}}
\end{remark}

\begin{remark}
{\em {The above results apply to the curve $\cX$ in Example \ref{ex0} and provide a new proof for the Cossidente-Penttila hemisystem. Since $\gg(\cX)=0$, the automorphism group $\aut(\cX)$ of $\cX$ defined over $\mathbb{F}_{q^2}$ is isomorphic to $\PGL(2,q^2)$ and acts on $\mathbb{F}_{q^2}$ as $\PGL(2,q^2)$ in its natural $3$-transitive permutation representation. Its action on the remaining points of $\cU_3$ has two orbits, say  $\cO_1$ and $\cO_2$, of length $\ha(q+1)q^2(q^2+1)$ and $\ha(q-1)q^2(q^2+1)$ respectively. The unique subgroup of $\aut(\cX)$ of index $2$  is isomorphic to $\PSL(2,q^2)$ which has the same three orbits on $\cU_3$ as $\aut(\cX)$ does. Choose $\aut(\cX)$ for $\mathfrak{G}$ and its unique subgroup isomorphic to $\PSL(2,q^2)$  for $\mathfrak{H}$. Then Condition (C) is satisfied with $r=1$. To show that $\mathfrak{G}$ also satisfies the first conditions in (D), we need to prove that $\mathfrak{G}_Q$ for $Q\in \xfq2$ acts regularly on the set of all generators through $Q$. Since $\mathfrak{G}$ is (triply)-transitive on $\xfq2$, we may assume $Q=A(\infty)=(0,0,0,1)$. Then $\mathfrak{G}_Q$ consists of $q^2(q-1)^2$ linear collineations $\gamma_{\alpha,\lambda}$ with $\alpha\in {\mathbb{F}_{q^2}}, \lambda\in {\mathbb{F}_{q^2}^*}$ associated to the matrices
$$
\left(
  \begin{array}{cccc}
    1 & 0 & 0 & 0 \\
    \alpha & \lambda & 0 & 0 \\
    \alpha^q & 0 & \lambda^q & 0 \\
    \alpha^{q+1} & \lambda\alpha^q & \lambda^q\alpha & \alpha^{q+1} \\
  \end{array}
\right)
$$
Furthermore, the generators through $Q$ are the lines $\ell_j$ with
$$\ell_j=\{(0,1,\theta_j,\mu)|\mu\in \mathbb{F}_{q^2}\} \cup \{(0,0,0,1)\}$$
where $j=1,2,\ldots, q+1$ and $\theta_j^{q+1}=-1$. Now, a direct computation proves that $\mathfrak{G}_Q$ acts regularly on the set of all generators through $Q$. Also, since $\gamma_{\alpha,\lambda}\in \mathfrak{H}_Q$ if and only if $\lambda$ is a square in $\mathbb{F}_{q^2}^*$, a direct computation also shows that $\mathfrak{H}_Q$ has two orbits of generators through $Q$, both of length $\ha(q+1)$. Therefore, Conditions (C) and (D) are satisfied. Also, for a point $P\in g$ with a generator $g$ through $Q$, the $2$-point stabilizers $\mathfrak{G}_{Q,P}$ and $\mathfrak{H}_{Q,P}$ coincide, and $|\mathfrak{G}_{Q,P}|=q-1$.
From these computation we also have that $\mathfrak{G}$ acts transitively on the set $S$ of points covered by the generators meeting $\xfq2$. From this, either $S=\cO_1$ or $S=\cO_2$. Since the stabilizer of a point in $\cO_2$ has order $2(q+1)$, the latter case cannot actually occur. Therefore, $S=\cO_1$, whence $|\mathfrak{G}_P|=2(q-1)$, and $|\mathfrak{H}_P|=q-1$. Now, the hypotheses of Propositions \ref{lem18jun2017} and \ref{122jun2017} are fulfilled, and we obtain the Cossidente-Penttila hemisystem.
It may also be shown with a double counting argument that the number $n_P(\cX)$ generators through $P$ meeting $\xfq2$ is equal to $2$.}}
\end{remark}
\section{Our procedure on the Fuhrmann-Torres maximal curve for $q\equiv 1 \pmod 4$}
\label{secsome}
Our aim is to show that the method introduced in Section \ref{hema} also applies to another family of $\mathbb{F}_{q^2}$-rational curve. From now on we assume $q\equiv 1 \pmod 4$.

Up to a change of the projective frame in ${\mathrm{PG}}(3,{q^2})$,
the equation of $\cU_3$ may also be written in the form
\begin{equation}
\label{eq017jun2017}
X_1^{q+1}+2X_2^{q+1}=X_3^qX_0+X_3X_0^q.
\end{equation}
In $\PG(2,q^2)$ with homogenous coordinates $(x,y,z)$. let $\cF^+$ be the plane curve of genus $\qa(q-1)^2$ with equation
\begin{equation}
\label{eq17jun2017}
y^qz-yz^q=x^{(q+1)/2}.
\end{equation}
A non-singular model $\cX^+$ of $\cF^+$ is given by the morphism
$\varphi\,:\,\cF^+ \rightarrow {\rm PG}(3,\overline{\mathbb{F}}_{q^2})$ defined by the coordinate functions
$$X_0=z^2,\  X_1=xz,\  X_2=yz,\  X_3=y^2.$$
The curve $\cX^+$ lies in the
Hermitian surface $\cU_3$ and it is an $\mathbb{F}_{q^2}$-rational curve of degree $q+1$ naturally embedded in $\cU_3$.
Furthermore, $\xpfq2$ has size $\ha(q^3+q+2)$ and it contains the set $\Omega$ cut out on $\cX^+$ by the plane $\pi$ of equation $X_1=0$; equivalently $\Omega$ is the complete intersection in $\pi$ of the conic $\cC$ with equation $X_0X_3-X_2^2=0$ and the Hermitian curve $\cH(2,q^2)$ with equation $X_0^qX_3+X_0X_3^q-2X_2^{q+1}=0$. Therefore, $\xpfq2$ splits into $\Omega$ and the set $\Delta^+$ consisting all $\fq$-rational points of $\cX^+$ not lying on $\pi$ where $|\Omega|=q+1$ and $|\Delta^+|=\ha(q^3-q)$.
The point $X_\infty=(0,1,0,0)$ is not in $\cU_3$. Furthermore, a line through $X_\infty$ meets $\xpfq2$ in either $\frac{1}{2}(q+1)$ or $1$ or $0$ points. More precisely,  there are exactly $q^2-q$ lines
through $X_\infty$ sharing $\frac{1}{2}(q+1)$ points with $\xpfq2$, and $q+1$ lines having just one point in $\xfq2$. The former lines meet the plane $\pi$ in the points of the conic $\cC$ not lying on $\Omega$; the latter in the points of $\Omega.$

The curve $\cX^+$ also lies on the cone $\cQ_3$ with vertex $X_\infty$ which projects the conic $\cC$, and the complete intersection of $\cU_3$ and $\cQ_3$ splits into $\cX^+$ and another $\fq$-rational maximal curve $\cX^-$. Actually, $\cX^-$ is a non-singular model of the plane curve $\cF^-$ of
equation
\begin{equation}
\label{eq30jun2017}
y^qz-yz^q=-x^{(q+1)/2}.
\end{equation}
by the the morphism $\varphi$. The above properties of $\cX^+$ hold true for $\cX^-$ when $^+$ is replaced by $^-$. Also, $\Omega$ is the set of all common points of $\cX^+$ and $\cX^-$.
\subsection{Conditions (C) and (D) on $\Delta^+$.}
\label{sub1}
According to our notation introduced in Section \ref{hema}, let $\cG$ denote the set of all generators of $\cU_3$ which meet $\cX^+$.
\begin{lemma}
\label{lem30lug}
$\cG$ is also the set of all generators meeting $\cX^-$.
\end{lemma}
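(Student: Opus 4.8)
The plan is to show the two generator-sets coincide by exploiting the symmetry between $\cX^+$ and $\cX^-$ furnished by the cone $\cQ_3$. First I would recall that $\cX^+$ and $\cX^-$ together form the complete intersection $\cU_3\cap\cQ_3$, and that $\Omega=\cX^+\cap\cX^-$ is the conic $\cC$ lying in the plane $\pi:\,X_1=0$. Any generator $g\in\cG$ meets $\cX^+$; by Lemma \ref{lem18Ajun2017}(ii) a generator meeting $\cX^+$ in an $\mathbb{F}_{q^2}$-rational point meets it in exactly one such point, say $Q$. The key dichotomy is whether $Q\in\Omega$ or $Q\in\Delta^+$. If $Q\in\Omega$, then $Q\in\cX^-$ as well, so $g$ meets $\cX^-$ and we are done for that case.

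The substantive case is $Q\in\Delta^+$. Here I would use the involution interchanging the two sheets of the cone. Concretely, $\cX^+$ and $\cX^-$ are the images under $\varphi$ of $\cF^+$ and $\cF^-$, whose plane equations \eqref{eq17jun2017} and \eqref{eq30jun2017} differ only by the sign of $x^{(q+1)/2}$; the map $x\mapsto \zeta x$ with $\zeta$ a suitable element of $\mathbb{F}_{q^2}^*$ (a $(q+1)/2$-th root of $-1$, which exists since $q\equiv1\pmod 4$) sends $\cF^+$ to $\cF^-$, hence induces a projectivity $\tau$ of $\PG(3,q^2)$ — acting only on the $X_1$-coordinate, $X_1\mapsto \zeta X_1$ — that fixes $\cQ_3$ and $\cU_3$, fixes $\pi$ pointwise, and swaps $\cX^+$ with $\cX^-$. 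Since $\tau\in\PGU(4,q)$ it permutes the generators of $\cU_3$, so $g\in\cG$ (i.e. $g$ meets $\cX^+$) forces $\tau(g)$ to meet $\cX^-$; conversely any generator meeting $\cX^-$ is $\tau$ of one meeting $\cX^+$. Thus $\tau$ maps $\cG$ onto the set of generators meeting $\cX^-$, and it remains to see these two sets are literally equal, not merely $\tau$-conjugate.

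To close that gap I would argue that each generator of $\cG$ is itself $\tau$-invariant, or else pair up with a companion in $\cG$ that already meets $\cX^-$. A generator $g$ through $Q\in\Delta^+$ lies in the tangent plane $\Pi_Q$ to $\cU_3$; its other $\mathbb{F}_{q^2}$-rational intersections with $\cU_3\cap\cQ_3$ (a curve of degree $2(q+1)$) are governed by B\'ezout. Because $g\subset\cQ_3$ would force $g$ to be a line of the cone through $X_\infty$, which is impossible ($X_\infty\notin\cU_3$), $g$ meets $\cQ_3$ in at most two points; one is $Q$, and I claim the second is the unique further point $Q'$ where $g$ crosses the cone, necessarily lying on $\cX^-$ since $\cU_3\cap\cQ_3=\cX^+\cup\cX^-$ and $\cX^+$ meets $g$ only at $Q$. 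Verifying $Q'$ is $\mathbb{F}_{q^2}$-rational — equivalently that $g$ actually meets $\cX^-(\mathbb{F}_{q^2})$ and not just $\cX^-$ over the closure — is the main obstacle: I would do it by the same local computation used for the $q^2-q$ secant lines and $q+1$ tangent lines through $X_\infty$ described just before the lemma, noting that $g$ projects from $X_\infty$ onto one of those lines and that the $(q+1)/2$-to-one cover of that line by $\cX^+\cup\cX^-$ is defined over $\mathbb{F}_{q^2}$ with the fibres split as claimed. Once $Q'\in\cX^-(\mathbb{F}_{q^2})$ is established, $g$ meets $\cX^-$; symmetry gives the reverse inclusion, and $\cG$ equals the set of generators meeting $\cX^-$.
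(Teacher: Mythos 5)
Your reduction to a generator $g$ through a point $Q\in\Delta^+$ and the idea of producing a second point of $g$ on the cone $\cQ_3$ is appealing, but the central step is asserted, not proved: you claim that $g$ \emph{crosses} $\cQ_3$ in a further point $Q'$. Since $g\not\subset\cQ_3$ (correctly argued via $X_\infty\notin\cU_3$), B\'ezout only gives that $g$ meets $\cQ_3$ in at most two points counted with multiplicity; nothing in your argument excludes the possibility that $g$ is tangent to $\cQ_3$ at $Q$, in which case $g\cap\cQ_3=\{Q\}$, hence $g\cap\cX^-=\emptyset$ and the lemma would fail for that very $g$. This is exactly the crux, and it is not a formal triviality: the only candidate for such a tangent generator is the line $\Pi_Q\cap T$, where $T$ is the tangent plane to $\cQ_3$ at $Q=(1,u,v,v^2)$, and deciding whether that line lies on $\cU_3$ is an arithmetic question — a short computation shows it is a generator exactly when $4u^{(1-q^2)/2}+2=0$, which fails only because $u^{(q^2-1)/2}=(v^q-v)^{q-1}=-1$ forces $u$ to be a nonsquare in $\fq$ (were $u$ a square, the condition would read $6=0$ and tangency would actually occur in characteristic $3$). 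So a genuine verification, absent from your proposal, is required. Your treatment of the rationality of $Q'$ is also not a proof (the projection of $g$ from $X_\infty$ and the fibres of the cover are invoked only vaguely), although that part is easy to repair: once $g\cap\cX^-$ is a single point, it is fixed by the Frobenius collineation because $g$ and $\cX^-$ are defined over $\fq$.

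A clean way to close the gap inside your framework, using only tools already in the paper, is the following: apply Lemma \ref{lem18Ajun2017}(iii) to $\cX^-$ and the point $Q$ to get $q+1$ pairwise distinct points of $\cX^-$ in $\Pi_Q$; these lie on the $q+1$ generators through $Q$, and no generator through $Q$ can carry two of them, since together with $Q$ it would then meet the quadric $\cQ_3$ in three points, hence lie on $\cQ_3$ and pass through $X_\infty\notin\cU_3$. Thus each generator through $Q$ contains exactly one point of $\cX^-$, which is $\fq$-rational by the Frobenius argument above, and in particular no tangency occurs. By contrast, the paper proves the lemma by a direct computation: it characterizes when $Q_{s,t}\in\cX^-$ lies in the tangent plane at $P_{u,v}\in\Delta^+$, reduces this to the curve $F(V,T)=0$ and counts its points via a degree-two cover of a Hermitian curve, obtaining exactly $q+1$ distinct $\fq$-rational points of $\cX^-$ in each such tangent plane; that computation also yields the explicit parametrization of $\cG_1$ that is reused in the subsequent sections, which your more geometric route would not provide.
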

\begin{proof}
Fix any point $P_{u,v}=(1,u,v,v^2)$ on $\cX^+$. Then $u^{(q+1)/2}=v^q-v$. Obviously, $P\not \in \cX^-$ may be assumed. Then $v\in \fq\setminus \mathbb{F}_{q}.$  The tangent plane to $\cU_3$ at $P_{u,v}$ contains the point $Q_{s,t}=(1,s,t,t^2)$ of $\cX^-$ (with $-s^{(q+1)/2}=t^q-t$) if and only if
\begin{equation}
\label{eq130jun2017}
u^qs-(t-v^q)^2=0.
\end{equation}
By Result \ref{rs1}, it is enough to prove that Equation (\ref{eq130jun2017}) has exactly $q+1$ pairwise distinct solutions. Assume that $(s,t)$ is such a solution. Then $$(su^q)^{(q+1)/2}=
s^{(q+1)/2}{(u^q)}^{(q+1)/2}=-(t^q-t)(v^q-v)^q=(t^q-t)(v^q-v),$$ whence
\begin{equation}
\label{eq1lug2017} (t-v^q)^{q+1}=(t^q-t)(v^q-v);
 \end{equation}
equivalently $(t+v)^{q+1}=2(tv+(tv)^q$.
Therefore, $(v,t)$ is a root of the polynomial
\begin{equation}
\label{eqA1lug2017} F(V,T)=(T+V)^{q+1}-2(TV+(TV)^q).
\end{equation}
Let $\cF$ be the plane curve with affine equation $F(V,T)=0$. The singular points of $\cF$ are its two points at infinity, $V_\infty$ and $T_\infty$, each being the center of exactly one branch
defined over $\mathbb{F}_q$, together with its points on the line of equation $V=T$, each being the center of exactly two branches, and all such branches are defined over $\mathbb{F}_q$ so that
$\cF(\mathbb{F}_q)$ has size $2(q+1)$.

Let $\cH$ be the Hermitian curve with affine equation $X^{q+1}=Y^q+Y$. The morphism $\psi,:\,\cF \rightarrow \cH$ defined by $X=V+T,\, Y= 2VT$ is a Galois cover of degree $2$ associated to
the involutory automorphism $\alpha:\, (V,T)\rightarrow (T,V)$. Since $v\in \fq\setminus \mathbb{F}_{q}$, the point $P(v,t)$ of $\cF$ is  non-singular, and hence $\psi(P(v,t))=(v+t,2tv)\in \cH$. If $\tau$ runs over
$\fq\cup \{\infty\}$ then the points $R_\tau=(v+\tau,2v\tau)$ describe the line $\ell_v$ of equation $Y=2Xv-v^2$. A direct computation shows that $\ell_v$ is not a tangent to $\cH$. Therefore, $|\ell_v\cap \cH|=q+1$.
This shows that for any fixed $v\in \fq\setminus \mathbb{F}_{q}$ Equation (\ref{eq1lug2017}) has exactly $q+1$ pairwise distinct solutions in $t$. Therefore, $\cF(\fq)\setminus \cF(\mathbb{F}_q)$
has size $(q^2-q)(q+1)$.

It remains to show that for any fixed $v\in \fq\setminus \mathbb{F}_{q}$, there exists exactly $\ha(q+1)$ values of $u$ such that $v^q-v=u^{(q+1)/2}$. Equation $v^q-v=u^{(q+1)/2}$ has $\ha(q+1)$ solutions in $u\in \overline{\mathbb{F}}_{q^2}$, and we show that every such $u$ falls in $\fq$. From $v^q-v-u^{(q+1)/2}=0$,
$${(u^{(q+1)/2})}^{q}=v^{q^2}-v^q=-(v^q-v)=-u^{(q+1)/2}$$
whence $$u^{(q+1)/2}({(u^{(q+1)/2}})^{q-1}+1)=0$$
which yields $u^{q^2-1}=1$, that is, $u\in \fq$.
\end{proof}
\begin{remark}
\label{rem30lug}
{\em{The proof of Lemma \ref{lem30lug} also shows that the set $\cG$ of all generators meeting $\cX^+(\fq)$ splits into two subsets: $\cG=\cG_0\cup\cG_1$ where $\cG_0$ is the set of the $(q+1)^2$ generators meeting $\Omega$ while $\cG_1$ is the set of the $\ha(q^3-q)(q+1)$ generators meeting both $\Delta^+$ and $\Delta^-$. The same holds for $\cX^-(\fq)$.}}
\end{remark}
This gives a useful representation of $\cG_1$ stated in the following lemma.
\begin{lemma}
\label{lem1lug2017}  The generator-set $\cG_1$ consists of the lines $g_{u,v,s,t}$ spanned by the points $P_{u,v}\in \Delta^+$ and $Q_{s,t}\in \Delta^-$ with
\begin{equation}
\label{eqC24lug}
F(v,t)=(v+t)^{q+1}-2(vt+(vt)^q)=0,
\end{equation}
and
\begin{equation}
\label{eq25Alug}
u^{(q+1/2}=v^q-v,\quad -s^{(q+1)/2}=t^q-t, \quad s=\frac{(t-v^q)^2}{u^q}.
\end{equation}
\end{lemma}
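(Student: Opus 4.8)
The plan is to unwind the previous two results — Lemma \ref{lem30lug} and Remark \ref{rem30lug} — and simply package the information about the generators meeting $\cX^+(\fq)$ into an explicit parametric form. The key point established in the proof of Lemma \ref{lem30lug} is that a generator $g$ meeting $\Delta^+$ at a point $P_{u,v}$ also meets $\cX^-$, and by Remark \ref{rem30lug} such a generator actually meets $\Delta^-$ (not $\Omega$), hence at a point of the form $Q_{s,t}=(1,s,t,t^2)$ with $-s^{(q+1)/2}=t^q-t$. So every generator in $\cG_1$ is indeed of the shape $g_{u,v,s,t}$ spanned by such a pair $(P_{u,v},Q_{s,t})$, and conversely any line spanned by a point of $\Delta^+$ and a point of $\Delta^-$ lying on a common tangent plane to $\cU_3$ is a generator (this is exactly the content of Result \ref{rs1} applied in the proof of Lemma \ref{lem30lug}). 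The content of the lemma is therefore to show that the defining conditions ``$g$ is a generator through $P_{u,v}\in\Delta^+$ and $Q_{s,t}\in\Delta^-$'' are equivalent to the four scalar equations in the statement.

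First I would record the two ``curve'' conditions: $P_{u,v}\in\cX^+$ forces $u^{(q+1)/2}=v^q-v$, and $Q_{s,t}\in\cX^-$ forces $-s^{(q+1)/2}=t^q-t$, directly from equations \eqref{eq17jun2017} and \eqref{eq30jun2017} under the morphism $\varphi$. Next, the condition that $Q_{s,t}$ lie in the tangent plane $\Pi_{P_{u,v}}$ to $\cU_3$ is precisely \eqref{eq130jun2017}, i.e. $u^qs-(t-v^q)^2=0$; since $P_{u,v}\notin\cX^-$ (as $v\notin\mathbb F_q$, being in $\Delta^+$) we have $u\neq 0$, so this solves uniquely for $s=(t-v^q)^2/u^q$, giving the third equation in \eqref{eq25Alug}. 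Finally I would show that, given the two curve conditions, the tangent-plane condition \eqref{eq130jun2017} is equivalent to \eqref{eqC24lug}: this is exactly the computation carried out in the proof of Lemma \ref{lem30lug}, where from \eqref{eq130jun2017} one deduces $(t-v^q)^{q+1}=(t^q-t)(v^q-v)$, which rearranges to $F(v,t)=(v+t)^{q+1}-2(vt+(vt)^q)=0$. One should check the converse direction too: if $F(v,t)=0$ and $s$ is defined by $s=(t-v^q)^2/u^q$ with $u^{(q+1)/2}=v^q-v$, then $-s^{(q+1)/2}=t^q-t$ automatically (so $Q_{s,t}\in\cX^-$) and \eqref{eq130jun2017} holds, so $g_{u,v,s,t}$ is a generator — this is the ``only if'' becoming ``if and only if'', and it follows by running the norm computation backwards, using that taking $(q+1)/2$-th powers is a bijection on the relevant coset.

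The only genuinely delicate point is the bookkeeping of which branch / which sign one lands on when extracting $(q+1)/2$-th roots: one must verify that $s$ defined by the third equation of \eqref{eq25Alug} really satisfies $-s^{(q+1)/2}=t^q-t$ and not $+s^{(q+1)/2}=t^q-t$, i.e. that the point $Q_{s,t}$ lies on $\cX^-$ rather than $\cX^+$. This is where the hypothesis $v\in\fq\setminus\mathbb F_q$ and the identity $u^{(q+1)/2}=v^q-v$ (so that $(u^{(q+1)/2})^{q-1}=-1$, as shown at the end of the proof of Lemma \ref{lem30lug}) are used: computing $s^{(q+1)/2}=(t-v^q)^{q+1}/(u^q)^{(q+1)/2}=(t^q-t)(v^q-v)\cdot(u^{(q+1)/2})^{-q}=(t^q-t)(v^q-v)\cdot(-(v^q-v))^{-1}=-(t^q-t)$, which is exactly the $\cX^-$ relation. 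So everything reduces to the norm/trace identities already in hand, and the main obstacle is purely the sign/root-of-unity accounting rather than any new idea; once that is settled the stated parametrisation of $\cG_1$ follows, completing the proof.
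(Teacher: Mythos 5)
Your proposal is correct and follows essentially the same route as the paper, which states Lemma \ref{lem1lug2017} without a separate proof precisely because it is the repackaging of the computation in the proof of Lemma \ref{lem30lug} (tangent-plane condition (\ref{eq130jun2017}) giving $s=(t-v^q)^2/u^q$ and, via the norm identity, $F(v,t)=0$) together with Remark \ref{rem30lug}. Your explicit check of the converse branch, namely that $s$ defined by the third equation of (\ref{eq25Alug}) satisfies $-s^{(q+1)/2}=t^q-t$ because $(u^{(q+1)/2})^q=-(v^q-v)$, is a detail the paper leaves implicit and is carried out correctly.
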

According to our approach described in Section \ref{hema}, we need some group-theoretic properties of $\cF$.
\begin{lemma}
\label{lem1Alug1} $\aut(\cF)$ contains a subgroup $\Psi\cong \PGL(2,q)$ that acts faithfully on the set $\cF(\fq)\setminus \cF(\mathbb{F}_q)$ as a sharply transitive permutation group.
\end{lemma}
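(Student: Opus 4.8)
The plan is to exhibit $\Psi$ as the ``diagonal'' copy of $\PGL(2,q)$ inside $\aut(\cF)$: for a fractional linear map $g$ with coefficients in $\mathbb{F}_q$ let $\hat g$ be the birational transformation $(V,T)\mapsto(g(V),g(T))$ of the plane, and set $\Psi=\{\hat g\mid g\in\PGL(2,q)\}$. First I would check that each $\hat g$ restricts to an automorphism of $\cF$. Since $\PGL(2,q)$ is generated by the translations $z\mapsto z+c$ with $c\in\mathbb{F}_q$, the homotheties $z\mapsto\lambda z$ with $\lambda\in\mathbb{F}_q^*$, and the inversion $z\mapsto 1/z$, it is enough to verify this for these three types, and a direct substitution in $F(V,T)=(V+T)^{q+1}-2(VT+(VT)^q)$ does it: using that $2$, $c$, $\lambda$ lie in $\mathbb{F}_q$ (so that $2^q=2$, $c^q=c$, $\lambda^q=\lambda$) one gets $F(V+c,T+c)=F(V,T)$, $F(\lambda V,\lambda T)=\lambda^2 F(V,T)$ and $F(1/V,1/T)=(VT)^{-(q+1)}F(V,T)$, so in each case the curve $\cF$ is left invariant. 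The assignment $g\mapsto\hat g$ is a homomorphism, and it is injective because $V$ is a non-constant function on $\cF$ (an element acting trivially on $\cF$ fixes infinitely many values of $V$, hence is the identity). Thus $\Psi\cong\PGL(2,q)$.

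Next, $\Psi$ is defined over $\mathbb{F}_q$, hence commutes with the Frobenius maps $x\mapsto x^q$ and $x\mapsto x^{q^2}$; therefore $\Psi$ permutes the set $\cF(\mathbb{F}_q)$ of $\mathbb{F}_q$-rational places and the set $\cF(\fq)$ of $\fq$-rational places, and so acts on $P:=\cF(\fq)\setminus\cF(\mathbb{F}_q)$. By the count recalled earlier, $|P|=(q^2-q)(q+1)=q(q^2-1)=|\PGL(2,q)|=|\Psi|$. Consequently it suffices to prove that $\Psi$ acts on $P$ \emph{freely}: a free action on a set whose cardinality equals the group order is automatically sharply transitive, in particular faithful, which is precisely the assertion of the lemma.

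To prove freeness, suppose some $g\in\PGL(2,q)$, $g\neq 1$, fixes a place $\mathfrak p\in P$. All singular points of $\cF$ and both points of $\cF$ at infinity are $\mathbb{F}_q$-rational (indeed the computation $F(v,v)=-2(v-v^q)^2$ shows that $\cF$ meets the line $V=T$ only where $v\in\mathbb{F}_q$), so the place $\mathfrak p$, being $\fq$-rational but not $\mathbb{F}_q$-rational, lies over a non-singular affine point $(v_0,t_0)$ with $v_0,t_0\in\fq$ not both in $\mathbb{F}_q$. As $\hat g$ fixes $\mathfrak p$ and $\mathfrak p$ is not at infinity, neither $g(v_0)$ nor $g(t_0)$ can be $\infty$, so $\hat g$ carries the place over $(v_0,t_0)$ to the place over $(g(v_0),g(t_0))$; hence $g(v_0)=v_0$ and $g(t_0)=t_0$. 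We may assume $v_0\notin\mathbb{F}_q$ (otherwise interchange $V$ and $T$, which is legitimate since $F$ is symmetric). Since $g$ has coefficients in $\mathbb{F}_q$ it then also fixes $v_0^q\neq v_0$. Finally $t_0\notin\{v_0,v_0^q\}$: a short computation gives $F(v_0,v_0)=-2(v_0-v_0^q)^2$ and $F(v_0,v_0^q)=(v_0-v_0^q)^2$, both nonzero because $v_0\notin\mathbb{F}_q$. Thus $g$ fixes three pairwise distinct points $v_0,v_0^q,t_0$, so $g=1$, a contradiction. Hence $\Psi$ acts freely, and therefore sharply transitively and faithfully, on $\cF(\fq)\setminus\cF(\mathbb{F}_q)$.

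I expect the only genuinely delicate point to be the reduction in the last paragraph: one must be sure that each place of $P$ sits over a \emph{smooth} affine point, so that the diagonal action is computed simply by evaluating $g$ on the two coordinates and ``$\mathfrak p$ is fixed'' indeed forces ``$v_0$ and $t_0$ are fixed by $g$''. This is exactly what the description of the singular locus of $\cF$ given before the lemma provides; the remaining steps — invariance of $\cF$ under $\Psi$, the structure of $\PGL(2,q)$, and the counting argument — are routine.
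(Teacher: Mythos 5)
Your proposal is correct and follows essentially the same route as the paper: the same diagonal copy of $\PGL(2,q)$ generated by translations, homotheties and the inversion, invariance of $\cF$ checked on these generators, the count $|\cF(\fq)\setminus\cF(\mathbb{F}_q)|=q^3-q=|\PGL(2,q)|$, and freeness obtained from the fixed-point structure of a nontrivial Möbius map over $\mathbb{F}_q$ combined with $F(v,v)=-2(v-v^q)^2\neq 0$ and $F(v,v^q)=(v-v^q)^2\neq 0$. The only (harmless) difference is presentational: you rule out a nontrivial stabilizer by producing three distinct fixed points $v_0,v_0^q,t_0$, whereas the paper concludes $t=v^q$ and then contradicts the curve equation — the same computation in a different order.
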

\begin{proof} A direct computation shows that the following maps are automorphisms of $\cF$.
\begin{itemize}
\item[(i)] $(V,T)\rightarrow (V+\alpha,T+\alpha),\, \alpha\in \mathbb{F}_q$.
\item[(ii)] $(V,T)\rightarrow (\mu V,\mu T),\, \mu\in \mathbb{F}_q^*$.
\item[(iii)] $(V,T) \rightarrow (V^{-1},T^{-1})$.
\end{itemize}
 These automorphisms are defined over $\mathbb{F}_q$ and generate a group $\Psi\cong \PGL(2,q)$ that preserves $\cF(\fq)\setminus \cF(\mathbb{F}_q)$. Take a point $P(v,t)\in\cF(\fq)\setminus \cF(\mathbb{F}_q)$ and suppose that it is fixed by a non-trivial element $\theta\in\Psi$. Then there exist $\alpha,\beta,\gamma,\delta\in \mathbb{F}_q$ with $\alpha\delta-\beta\gamma\neq 0$ such that $$\theta(v)=\frac{\alpha v +\beta}{\gamma v + \delta}=v, \quad \theta(t)=\frac{\alpha t +\beta}{\gamma t + \delta}=t.$$
 Since $v\neq t$ and $v,t\not\in \mathbb{F}_q$, this yields $v=t^q$. From (\ref{eqA1lug2017}), either $v^q=v$ or $t^q=t$, a contradiction. Since $|\mathfrak{G}_1|=q^3-q=|\cF(\fq)\setminus \cF(\mathbb{F}_q)|$, the claim follows.
\end{proof}

The next step is to construct a group from $\Psi$ that acts sharply transitively on $\cG_1$. For this purpose, define $\Gamma$ to be the group generated by the following maps on the generator set $G_1$:
\begin{itemize}
\item[(i)] $T_\alpha:(u,v,s,t)\rightarrow (u,v+\alpha,s,t+\alpha)$ with $\alpha\in\mathbb{F}_q;$
\item[(ii)]  $M_\rho:(u,v,s,t)\rightarrow (\rho u,\rho v, \rho s,\rho t)$ with $\rho$ square in $\mathbb{F}_q$;
\item[(iii)] $N_\sigma:(u,v,s,t)\rightarrow((\sigma v^{-2}u,v^{-1},\sigma t^{-2}s,t^{-1})$ with $\sigma^{(q+1)/2}=-1$. 
\item[(iv)] $R_\mu:(u,v,s,t)\rightarrow((\mu v^{-2}u,\mu v^{-1},\mu t^{-2}s,\mu t^{-1})$ with $\mu$ non-square in $\mathbb{F}_q$;
\item[(v)]  $L_\lambda:(u,v,s,t)\rightarrow (\lambda u,v,\lambda s,t),$ with $\lambda^{(q+1)/2}=1$ and $\lambda \in \fq$.
\end{itemize}
A straightforward computation shows that they indeed preserve $\cG_1$. To show that $\Gamma$ is sharply transitive on $\cG_1$, we first show that it is transitive. Take two generators in $\cG_1$, that is, two quadruples $(u,v,s,t)$ and $(u_1,v_1,s_1,t_1)$. Using Lemma \ref{lem1Alug1}, we find $\gamma \in \Gamma$ such that
$$\gamma:\,(u,v,s,t)\rightarrow (u_2,v_1,s_2,t_1)$$ with $$u_2^{(q+1)/2}=v_1^q-v_1,\,\, -s_2^{(q+1)/2}=t_1^q-t_1, \,\, s_2=\frac{(t_1-v_1^q)^2}{u_2^q}\,.$$
On the other hand,
$$u_1^{(q+1)/2}=v_1^q-v_1,\,\,-s_1^{(q+1)/2}=t_1^q-t_1,\,\,s_1=\frac{(t_1-v_1^q)^2}{u_1^q}\,.$$
Let $\lambda=u_1u_2^{-1}$. Since $s_1s_2^{-1}=u_2^qu_1^{-q}=u_2u_1^{-1}$, then
$$L_\lambda\circ\gamma: (u,v,s,t)\rightarrow (u_1,v_1,s_1,t_1).$$
To prove that the transitive action of $\Gamma$ on $\cG_1$ is also sharp, it is enough to show that $|\Gamma|=|\cG_1|$. Since $|\PGL(2,q)|=q^3-q$, the claim $|\Gamma|=|\cG_1|$ is a consequence of the following result.
\begin{lemma}
\label{lem2lug}  The subgroup of $\Gamma$ fixing both the second and fourth components, $v$ and $t$, coincides with the center $Z(\Gamma)$ of\, $\Gamma$. Also, $\Gamma/Z(\Gamma)\cong \PGL(2,q)$ and $Z(\Gamma)$ is a cyclic group of order $\ha(q+1)$.
\end{lemma}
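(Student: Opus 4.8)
The plan is to introduce the ``projection'' homomorphism $\pi\colon\Gamma\to\Psi\cong\PGL(2,q)$ that forgets the first and third components $u,s$, to compute its kernel, and to deduce the three assertions from the resulting central extension. Setting $\pi$ up is immediate: each generator $T_\alpha,M_\rho,N_\sigma,R_\mu,L_\lambda$ of $\Gamma$ transforms the pair $(v,t)$ by a rule involving $(v,t)$ only --- respectively $(v,t)\mapsto(v+\alpha,t+\alpha)$, $(\rho v,\rho t)$, $(v^{-1},t^{-1})$, $(\mu v^{-1},\mu t^{-1})$, $(v,t)$ --- so ``restriction to the second and fourth coordinates'' defines a homomorphism of $\Gamma$ into the symmetric group of $\cF(\fq)\setminus\cF(\mathbb{F}_q)$. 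By Lemma~\ref{lem1Alug1} its image lies in $\Psi$: the images of $T_\alpha,M_\rho,N_\sigma$ lie among the generators (i)--(iii) of $\Psi$, that of $R_\mu$ is an inversion followed by a scaling, and that of $L_\lambda$ is the identity. Moreover $\pi$ is onto, since its image contains all translations, the inversion, the scalings by squares (from $M_\rho$), and the scalings by non-squares (composing $R_\mu$ with $N_\sigma$), and these generate $\PGL(2,q)$.

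Next I would study $\ker\pi$, which by construction is exactly the subgroup of $\Gamma$ fixing $v$ and $t$. Put $Z:=\{L_\lambda\mid\lambda\in\mathbb{F}_{q^2},\ \lambda^{(q+1)/2}=1\}$. Then $Z\subseteq\ker\pi$; the map $\lambda\mapsto L_\lambda$ is an isomorphism of the group of $\ha(q+1)$-th roots of unity of $\mathbb{F}_{q^2}^*$ onto $Z$, so $Z$ is cyclic of order $\ha(q+1)$ (recall $\ha(q+1)\mid q^2-1$); and a direct check shows $L_\lambda$ commutes with $T_\alpha,M_\rho,N_\sigma$ and $R_\mu$, hence $Z\le Z(\Gamma)$. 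I would also record that for each $(v,t)$ with $v\notin\mathbb{F}_q$ the fibre $F_{v,t}:=\{(u,v,s,t)\in\cG_1\}$ consists of exactly $\ha(q+1)$ points --- parametrised by the $\ha(q+1)$ solutions $u$ of $u^{(q+1)/2}=v^q-v$, with $s$ then determined by~(\ref{eq25Alug}) --- and that $Z$ acts regularly on each $F_{v,t}$.

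The heart of the matter is the identity $\ker\pi=Z$, equivalently the statement that $\Gamma$ acts \emph{sharply}, not merely transitively, on $\cG_1$. Using the sharp transitivity of $\Psi$ on $B:=\cF(\fq)\setminus\cF(\mathbb{F}_q)$ (Lemma~\ref{lem1Alug1}), the stabiliser in $\Gamma$ of a point of $B$ equals $\ker\pi$. Given $\gamma\in\ker\pi$ and $P_0=(u_0,v_0,s_0,t_0)\in\cG_1$, write $\gamma(P_0)=(u_0',v_0,s_0',t_0)$; relations~(\ref{eq25Alug}) give $(u_0')^{(q+1)/2}=v_0^q-v_0=u_0^{(q+1)/2}$, hence $u_0'=\lambda u_0$ with $\lambda^{(q+1)/2}=1$, and then $s_0'=\lambda^{-q}s_0=\lambda s_0$, so $\gamma(P_0)=L_\lambda(P_0)$. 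Since $Z$ is central and regular on $F_{v_0,t_0}$, $\gamma$ agrees with $L_\lambda$ on that whole fibre, and likewise over every point of $B$; thus $\gamma$ acts on each $F_{v,t}$ as $L_{\lambda_\gamma(v,t)}$ for a unique $\lambda_\gamma(v,t)\in Z$. The assignment $\gamma\mapsto(\lambda_\gamma(v,t))_{(v,t)\in B}$ is then an injective homomorphism of $\ker\pi$ into $\prod_{(v,t)\in B} Z$ (the action of $\Gamma$ on $\cG_1$ being faithful), and $\Gamma$-conjugation on $\ker\pi$ corresponds, because $Z$ is central, to the relabelling of $B$ induced by $\pi$. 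To finish one must show these tuples are constant: I would verify this on lifts to $\Gamma$ of a set of defining relators of $\PGL(2,q)$ expressed through $T_\alpha,M_\rho,N_\sigma,R_\mu$, checking by means of~(\ref{eq25Alug}) that each such lift multiplies every $u$-coordinate by one and the same scalar and therefore lies in $Z$. This last verification --- ruling out kernel elements beyond the $L_\lambda$, i.e. the passage from transitivity to sharp transitivity --- is the step I expect to be the main obstacle; everything else is formal.

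Finally, granting $\ker\pi=Z$ we obtain the exact sequence $1\to Z\to\Gamma\xrightarrow{\pi}\PGL(2,q)\to1$ with $Z$ central and cyclic of order $\ha(q+1)$. As $\PGL(2,q)$ has trivial centre, $\pi$ carries $Z(\Gamma)$ to the identity, so $Z(\Gamma)\subseteq\ker\pi=Z\subseteq Z(\Gamma)$. Hence $Z(\Gamma)=Z=\ker\pi$ is precisely the subgroup of $\Gamma$ fixing $v$ and $t$, it is cyclic of order $\ha(q+1)$, and $\Gamma/Z(\Gamma)\cong\PGL(2,q)$, which is the assertion of the lemma.
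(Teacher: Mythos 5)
Your overall architecture (the projection $\pi\colon\Gamma\to\PGL(2,q)$ and its surjectivity, the identification of $Z=\{L_\lambda\mid\lambda^{(q+1)/2}=1\}$ as a central cyclic subgroup of order $\ha(q+1)$ inside $\ker\pi$, the fibre analysis over the pairs $(v,t)$, and the final deduction $Z(\Gamma)=\ker\pi$ from the triviality of the centre of $\PGL(2,q)$) is correct and runs parallel to the paper's proof; the last deduction is in fact spelled out more explicitly than in the paper. But you stop short of proving the one statement that carries the content of the lemma: that $\ker\pi\subseteq Z$, i.e.\ that the scalars $\lambda_\gamma(v,t)$ you attach to the fibres do not depend on $(v,t)$. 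You explicitly defer this to an unexecuted verification on lifts of defining relators of $\PGL(2,q)$. Without it you only have $Z\subseteq\ker\pi$, and then none of the three assertions follows: a priori $\ker\pi$ could be strictly larger than $Z$, $Z(\Gamma)$ could have order larger than $\ha(q+1)$, and $\Gamma/Z(\Gamma)$ need not be $\PGL(2,q)$ (nor would $|\Gamma|=|\cG_1|$, which is what the lemma is used for immediately afterwards). So, as written, the proposal is incomplete exactly at its crucial step.

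The missing step does not require relators; it follows from the shape of the elements of $\Gamma$. Each generator $T_\alpha,M_\rho,N_\sigma,R_\mu,L_\lambda$ maps $(u,v,s,t)$ to $(g(v)u,\,f(v),\,g(t)s,\,f(t))$, where $f(x)=(ax+b)/(cx+d)$ has coefficients in $\mathbb F_q$ and $g(x)=e\,(cx+d)^{-2}$ for a constant $e\in\fq$ attached to the same matrix; a one-line computation shows this shape is preserved under composition (the denominators multiply exactly as for the composite fractional map, and the same $g$ appears in the $u$- and $s$-slots because the $v$- and $t$-slots transform by the same $f$). Hence any $\tau\in\ker\pi$ induces a fractional map $f$ fixing all $q^3-q$ points of $\cF(\fq)\setminus\cF(\mathbb F_q)$, so $f=\mathrm{id}$, its matrix is scalar, $c=0$, and $g$ is a constant $\lambda$; your own computation with $u^{(q+1)/2}=v^q-v$ and $s=(t-v^q)^2/u^q$ then gives $\lambda^{(q+1)/2}=1$ and shows the $s$-multiplier equals the same $\lambda$, so $\tau=L_\lambda\in Z$. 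This is essentially the paper's argument: it performs precisely your pointwise computation on a nucleus element written as $(u,v,s,t)\mapsto(u_1,v,s_1,t)$ and concludes $\tau=L_\lambda$, the independence of $\lambda$ from the point being implicit in that description of $\tau$ as a word in the generators. With this observation inserted, the rest of your write-up completes the proof of the lemma.
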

\begin{proof} Let $L=\{L_\lambda|\lambda^{(q+1)/2}=1\}$. From the definition of its generators, $\Gamma$ induces a permutation group $\bar{\Gamma}$ on the pairs $(v,t)$ whose nucleus contains $L$, and $L$ is a subgroup of order $\ha (q+1)$ contained in $Z(\Gamma)$. On the other hand, if
$\tau:\,(u,v,s,t)\rightarrow (u_1,v,s_1,t)$ is an element in that nucleus then
$u_1^{(q+1)/2}=u^{(q+1)/2}$. Let $\lambda=u_1u^{-1}$. Then $\lambda^{(q+1)/2}=1$, and hence $\lambda^{q+1}=1$. From
$$s_1=\frac{(t-v^q)^2 }{u_1^q}= \frac{(t-v^q)^2 }{\lambda^q u^q}=\frac{1}{\lambda^q}s=\lambda s,$$
Hence $\tau=L_\lambda$. Thus, the nucleus coincides with $L$. To show that $\bar{\Gamma}\cong \PGL(2,q)$, recall that $\PGL(2,q)$ acts in its $3$-transitive permutation representation as the group of the invertible linear fractional maps with coefficients in $\mathbb{F}_q$. The maps (i), (ii), (iii), (iv) restricted on $v$ (equivalently on the pairs $(v,t)$) give a set of generators for $\PGL(2,q)$. Therefore,
$\bar{\Gamma}\cong \PGL(2,q)$.
\end{proof}
It is not true that $\Gamma$ itself contains a subgroup isomorphic to $\PGL(2,q)$. Nevertheless, as it follows from the forthcoming Lemma \ref{lem5lug2017},
its (unique) index $2$ subgroup $\Phi$  of $\Gamma$ is a splitting extension of $\PSL(2,q)$ by its center, that is, $\Phi\cong \PSL(2,q)\times C_{(q+1)/2}$ with a cyclic group of order $\ha(q+1)$.
Here $\Phi$ is generated by (i),(ii),(iii) and (v). From Lemma \ref{lem1Alug1}, $\Phi$ has two orbits on $\cG_1$, namely $\cM_1$ and $\cM_1'$. Let
 \begin{itemize}
\item[(vi)] $W:(u,v,s,t)\rightarrow (-u,t,-s,v).$
\end{itemize}
\begin{lemma}
\label{lem19lug2017} $W$ interchanges $\cM_1$ and $\cM_1'$.
\end{lemma}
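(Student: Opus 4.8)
The plan is to realize $W$ as the restriction to $\cG_1$ of the linear collineation $\iota\colon(X_0,X_1,X_2,X_3)\mapsto(X_0,-X_1,X_2,X_3)$ of $\PG(3,q^2)$, the homology with axis $\pi$ and centre $X_\infty$. Since $q\equiv 1\pmod{4}$ makes $(q+1)/2$ odd, the substitution $x\mapsto -x$ turns (\ref{eq17jun2017}) into (\ref{eq30jun2017}), so $\iota$ preserves $\cU_3$, interchanges $\cX^+$ and $\cX^-$, and hence permutes $\cG$ and, by Remark \ref{rem30lug}, $\cG_1$. The first task is to check that on $\cG_1$ this collineation is precisely $W$ --- equivalently, that $W$ is a well-defined fixed-point-free involution of $\cG_1$: inserting $W$ into the defining relations of Lemma \ref{lem1lug2017} reproduces them (the identity $u\,s^q=(v-t^q)^2$, the Frobenius transform of (\ref{eq130jun2017}), is what keeps the last relation valid), $W^2=\mathrm{id}$ is immediate, and $W$ fixes no generator since a line of $\cG_1$ meets $\Delta^+$ and $\Delta^-$ in two distinct points, so $v\ne t$. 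Next one verifies, by a single substitution for each generator, that $W$ commutes with $T_\alpha$, $M_\rho$, $N_\sigma$, $R_\mu$ and $L_\lambda$ --- i.e. with the whole $\Gamma$-action on $\cG_1$ (this reflects that $\iota$ commutes with the collineations of $\PG(3,q^2)$ inducing (i)--(v)). Therefore $W$ normalizes $\Phi$, so it permutes the pair $\{\cM_1,\cM_1'\}$, and the only thing left is to rule out that $W$ stabilizes $\cM_1$.

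To do this, recall from Lemma \ref{lem2lug} that $g_{u,v,s,t}\mapsto(v,t)$ maps $\cG_1$ onto $\cF(\mathbb{F}_{q^2})\setminus\cF(\mathbb{F}_q)$, equivariantly for the action of $\Gamma$ through $\bar\Gamma\cong\PGL(2,q)$ by the diagonal M\"obius action, with the orbits of $L=Z(\Gamma)\le\Phi$ as fibres; hence $\cM_1$ and $\cM_1'$ are the preimages of the two $\PSL(2,q)$-orbits on $\cF(\mathbb{F}_{q^2})\setminus\cF(\mathbb{F}_q)$, and $W$ induces there the involution $(v,t)\mapsto(t,v)$. Fixing any $g_0=g_{u_0,v_0,s_0,t_0}\in\cG_1$, the generators $g_0$ and $W(g_0)$ lie in the same $\Phi$-orbit precisely when some element of $\PSL(2,q)$ interchanges $v_0$ and $t_0$. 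Since $v_0\ne t_0$ lie outside $\mathbb{F}_q$, there is a unique $\tau\in\PGL(2,q)$ with $\tau(v_0)=t_0$ and $\tau(t_0)=v_0$, automatically an involution, so the statement reduces to showing that $\det\tau$ is a non-square in $\mathbb{F}_q^{*}$, i.e. that $\tau$ has no fixed point in $\PG(1,q)$.

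This last point is the only real computation and the main obstacle. With $p_1=v_0+t_0$, $p_0=v_0t_0$ one finds $\tau\colon Z\mapsto(rZ+p_0-p_1r)/(Z-r)$ with $r=(p_0^q-p_0)/(p_1^q-p_1)\in\mathbb{F}_q$, whence $\det\tau=-(r-v_0)(r-t_0)\in\mathbb{F}_q$. Expanding in a basis $\{1,\theta\}$ of $\mathbb{F}_{q^2}$ over $\mathbb{F}_q$ with $\theta^q=-\theta$, the relation $F(v_0,t_0)=0$ --- that is $N(p_1)=2\,\mathrm{Tr}(p_0)$ --- simplifies $\det\tau$ modulo a square to $-(k^2-n)$ for some $k\in\mathbb{F}_q$ and a fixed non-square $n$, while the bare requirement that $v_0,t_0\in\mathbb{F}_{q^2}$ (that $Z^2-p_1Z+p_0$ splits over $\mathbb{F}_{q^2}$) forces $k^2-n$ to be a non-square; here one uses that $\theta^2$ is a non-square but $-1$ and $-4$ are squares in $\mathbb{F}_q$. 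Hence $\det\tau$ is a non-square, $\tau\notin\PSL(2,q)$, $g_0$ and $W(g_0)$ sit in different $\Phi$-orbits, and $W$ interchanges $\cM_1$ and $\cM_1'$. The delicate part throughout is keeping the quadratic-residue bookkeeping straight and invoking $q\equiv 1\pmod{4}$ exactly where it is used.
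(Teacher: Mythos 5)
Your argument is correct, and at the decisive point it takes a genuinely different route from the paper's. The shared skeleton is the reduction: $W$ commutes with the $\Gamma$-action, hence permutes the two $\Phi$-orbits, and via the projection $g_{u,v,s,t}\mapsto(v,t)$, whose fibres are the orbits of $L=Z(\Gamma)\le\Phi$, everything comes down to showing that no element of $\PSL(2,q)$ interchanges the two coordinates of a point of $\cF(\mathbb{F}_{q^2})\setminus\cF(\mathbb{F}_q)$. The paper settles this by contradiction: the hypothetical swapping map is shown to be an involution, normalized up to $\PGL(2,q)$-conjugacy to $z\mapsto 1-z$, and then $F(t,1-t)=0$ forces $w=t-\tfrac{1}{2}$ to satisfy $w^{q^2-1}=(-1)^{(q+1)/2}=-1$, impossible for $t\in\mathbb{F}_{q^2}$. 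You instead pin down the unique $\tau\in\PGL(2,q)$ swapping $v_0,t_0$ and compute the quadratic character of its determinant; your sketch does close: writing $p_1=P+Q\theta$, $p_0=R+S\theta$ with $\theta^q=-\theta$ and $n=\theta^2$ a non-square, one gets $r=S/Q$, the curve relation $P^2-nQ^2=4R$ turns $\det\tau=-(r^2-rp_1+p_0)$ into $-\tfrac{Q^2}{4}(k^2-n)$ with $k=(2S-PQ)/Q^2$, and the discriminant $p_1^2-4p_0=2Q^2(n-k\theta)$ being a square in $\mathbb{F}_{q^2}$ means $n(n-k^2)$ is a square in $\mathbb{F}_q$, i.e.\ $k^2-n$ is a non-square; since $-1$ is a square, $\det\tau$ is a non-square and $\tau\notin\PSL(2,q)$. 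What each buys: the paper's proof is shorter and needs a single normalized computation; yours is coordinate-heavier but uniform and explicit, exhibiting the obstruction at every generator without choosing a conjugacy representative and making transparent where $q\equiv 1\pmod 4$ enters. Two details you should make explicit: first, $r=(p_0^q-p_0)/(p_1^q-p_1)$ presupposes $p_1\notin\mathbb{F}_q$ (i.e.\ $Q\neq 0$); this holds on $\cF(\mathbb{F}_{q^2})\setminus\cF(\mathbb{F}_q)$, but only after the small check that $p_1\in\mathbb{F}_q$ together with $F(v_0,t_0)=0$ gives $(p_1-2x)^2=-4ny^2$ for $v_0=x+y\theta$, forcing $y=0$ and $v_0\in\mathbb{F}_q$. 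Second, the map on quadruples induced by $(X_0,X_1,X_2,X_3)\mapsto(X_0,-X_1,X_2,X_3)$ is $(u,v,s,t)\mapsto(-s,t,-u,v)$ rather than the literal $(-u,t,-s,v)$ of the paper (only the former satisfies the relations of Lemma \ref{lem1lug2017}, precisely via your identity $us^q=(v-t^q)^2$); this is harmless here, since both arguments use only the induced swap $(v,t)\mapsto(t,v)$.
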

\begin{proof}
Since $W$ commutes with $\Gamma$ and $\Phi$ is an index $2$ subgroup of $\Gamma$ whose orbits are $\cM_1$ and $\cM_1'$, either $W$ preserves both $\cM_1$ and $\cM_1'$ or interchanges them.

For $g\in \Gamma$ and $(u,v,s,t)\in \cG_1$, let $g(u,v,s,t)=(u_1,v_1,s_1,t_1)$. From \ref{eq19lug2017}, there exists a linear fractional map $f$ such that
\begin{equation}
\label{eq19lug2017}
v_1=\frac{\alpha v +\beta}{\gamma v + \delta}, \quad t_1=\frac{\alpha t +\beta}{\gamma t + \delta},
\end{equation}
with $\alpha,\beta,\gamma,\delta \in\mathbb{F}_q$ and $\alpha\delta-\beta\gamma\neq 0$. If $W$ and $g$ have the same action on $(u,v,s,t)$ then $v_1=t$ and $t_1=v_1$ hold. Therefore,
\begin{equation}
\label{eq21lug2017}
t=\frac{\alpha v +\beta}{\gamma v + \delta}, \quad v=\frac{\alpha t +\beta}{\gamma t + \delta}.
\end{equation}
We show first that $f$ has order $2$. From (\ref{eq21lug2017}), $\alpha v+\beta=\gamma t v +\delta t$ and
$\alpha t+\beta=\gamma t v +\delta v.$ From this $\alpha(v-t)=\delta(t-v)$. Since $F(x,x)\neq 0$ for $x \in \fq\setminus \mathbb{F}_q$, this yields $\delta=-\alpha$. Hence $f$ has order $2$.

We show that $f\in \PGL(2,q)\setminus \PSL(2,q)$. By way of contradiction, let $f\in \PSL(2,q)$. Then, up to conjugacy in $\PGL(2,q)$, we have $\alpha=-1,\beta=\delta=1, \gamma=0$. Then $v=-t+1$ and hence $F(t,-t+1)=0$. A straightforward computation shows that this yields $$((t-\ha)^2)^q+(t-\ha)^2=0.$$
Let $w=t-\ha$. Since $t\not\in \mathbb{F}_q$, we have
${(w^2)}^{q-1}=-1,$ whence
$$w^{(q-1)(q+1)}=(-1)^{(q+1)/2}.$$
Since $\ha (q+1)$ is odd, this yields $w^{q^2-1}=-1$. Therefore, $w\not\in \fq$ and hence $t\not\in \fq$, a contradiction.
\end{proof}

\subsubsection{Collineation groups realizing $\Gamma$ and $\Phi$}
\label{subcol}
Now we describe $\Gamma$ and $\Phi$ in terms of linear collineation groups.
\begin{lemma}
\label{lem4lug2017} The group $\PGU(4,q)$ has a subgroup $\mathfrak{G}$ with the following properties:
\begin{itemize}
\item[(i)] $\mathfrak{G}$ is an automorphism group of $\cX^+$ and $\cX^-$;
\item[(ii)] $\mathfrak{G}$ preserves the point-sets $\Delta^+,\Delta^-,\Omega$ and the generator set $\cG_1$;
\item[(iii)] $\mathfrak{G}$  acts faithfully on $\Delta^+$, $\Delta^-$ and $\cG_1$;
\item[(iv)] the permutation representation of $\mathfrak{G}$ on $\cG_1$ is  $\Gamma$; in particular $\mathfrak{G}\cong \Gamma$.
\item[(v)] $\mathfrak{G}$ acts on $\Omega$ as $\PGL(2,q)$ in its natural $3$-transitive permutation representation.
\item[(vi)] The collineation group induced by $\mathfrak{G}$ on $\pi$ is $\mathfrak{G}/Z(\mathfrak{G})\cong PGL(2,q)$ with $Z(\mathfrak{G})\cong C_{(q+1)/2}$.
 \end{itemize}
\end{lemma}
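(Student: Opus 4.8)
The plan is to realise the five abstract generators (i)--(v) of $\Gamma$ by explicit $4\times4$ matrices over $\fq$ lying in $\PGU(4,q)$, and then to obtain (i)--(vi) by transporting, along the resulting isomorphism $\mathfrak{G}\cong\Gamma$, the structure of $\Gamma$ and of $Z(\Gamma)$ established in Section~\ref{sub1}. Keeping the model (\ref{eq017jun2017}) of $\cU_3$ and the parametrisations $P_{u,v}=(1,u,v,v^2)$ and $Q_{s,t}=(1,s,t,t^2)$, I would take the collineations $(X_0,X_1,X_2,X_3)\mapsto(X_0,X_1,X_2+\alpha X_0,X_3+2\alpha X_2+\alpha^2 X_0)$ for $T_\alpha$ (the embedded form of $y\mapsto y+\alpha$), $(X_0,X_1,X_2,X_3)\mapsto(X_0,\rho X_1,\rho X_2,\rho^2 X_3)$ for $M_\rho$, $(X_0,X_1,X_2,X_3)\mapsto(X_3,\sigma X_1,X_2,X_0)$ for $N_\sigma$ (a twisted embedded form of $y\mapsto y^{-1}$), $(X_0,X_1,X_2,X_3)\mapsto(X_3,\mu X_1,\mu X_2,\mu^2 X_0)$ for $R_\mu$, and $(X_0,X_1,X_2,X_3)\mapsto(X_0,\lambda X_1,X_2,X_3)$ for $L_\lambda$. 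A direct computation shows that each of these carries the left-hand side of (\ref{eq017jun2017}) to a scalar multiple of itself (to itself for $T_\alpha,N_\sigma,L_\lambda$, using $\sigma^{q+1}=1$ and $\lambda^{q+1}=1$, and to $\rho^2$ resp.\ $\mu^2$ times itself for $M_\rho$ resp.\ $R_\mu$), hence induces an element of $\PGU(4,q)$; that, evaluated at $P_{u,v}$ and $Q_{s,t}$, it maps these to points of $\cX^+$ resp.\ $\cX^-$ (this is exactly where the conditions ``$\rho$ square'', ``$\mu$ non-square'', ``$\sigma^{(q+1)/2}=-1$'' and ``$\lambda^{(q+1)/2}=1$'' enter, together with $\ha(q+1)$ being odd); and that the permutation it induces on $\cG_1=\{g_{u,v,s,t}\}$ is precisely the corresponding transformation (i)--(v) of Section~\ref{sub1}. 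Let $\mathfrak{G}\le\PGU(4,q)$ be the group these generate.

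Each generator of $\mathfrak{G}$ leaves both $\cX^+$ and $\cX^-$ invariant, hence so does $\mathfrak{G}$; by \cite[Theorem~3.7]{KT} it is then an automorphism group of each, which is (i). Each generator also fixes the plane $\pi:X_1=0$, so $\mathfrak{G}$ fixes $\Omega=\cX^+\cap\pi$, and therefore $\Delta^+$, $\Delta^-$ and $\cG_1$, which is (ii). The action on $\cG_1$ yields a homomorphism $\mathfrak{G}\to\mathrm{Sym}(\cG_1)$ whose image is generated by the transformations (i)--(v) and so equals $\Gamma$. It is injective: an element of its kernel fixes every generator of $\cG_1$ and hence every point of $\Delta^+$ (through each point of $\Delta^+$ pass at least two generators of $\cG_1$, as follows from $|\cG_1|=\ha(q^3-q)(q+1)$, $|\Delta^+|=\ha(q^3-q)$ and the bound $q+1$ on the number of generators through a point of $\cU_3$); but the fixed locus of a non-identity projectivity of $\PG(3,\fq)$ is a union of at most four subspaces, each contained in a plane and so meeting the degree-$(q+1)$ curve $\cX^+$ in at most $q+1$ points, too few to contain all $\ha q(q-1)(q+1)$ points of $\Delta^+$ when $q\ge 5$. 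Hence $\mathfrak{G}\cong\Gamma$, which is (iv); and the same argument, applied to $\Delta^+$ and, with the symmetric count, to $\Delta^-$, gives faithfulness on those sets, which is (iii).

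For (v), parametrise the conic $\cC:X_0X_3=X_2^2$ in $\pi$ by $y\mapsto(1,y,y^2)$ and $\infty\mapsto(0,0,1)$; the remaining Hermitian condition cutting $\Omega$ out of $\cC$ reduces to $(y^q-y)^2=0$, so $\Omega$ is identified with $\PG(1,q)=\mathbb{F}_q\cup\{\infty\}$. On this parameter the five generators act respectively as $y\mapsto y+\alpha$, $y\mapsto\rho y$, $y\mapsto y^{-1}$, $y\mapsto\mu y^{-1}$ and $y\mapsto y$; since $\rho$ ranges over the squares and $\mu$ over the non-squares of $\mathbb{F}_q^*$, these generate all of $\PGL(2,q)$ in its sharply $3$-transitive representation, which is (v). Finally, for (vi): each $L_\lambda$ fixes $\pi$ pointwise, so the cyclic subgroup $L=\langle L_\lambda:\lambda^{(q+1)/2}=1\rangle$, which has order $\ha(q+1)$ and is $Z(\Gamma)\cong Z(\mathfrak{G})$ by Lemma~\ref{lem2lug}, lies in the kernel $K$ of the action of $\mathfrak{G}$ on $\pi$; since $K$ fixes $\pi$, hence $\Omega$, pointwise, $K$ is contained in the kernel of the $\Omega$-action, which by (v) has index $|\PGL(2,q)|=[\mathfrak{G}:L]$ in $\mathfrak{G}$; as $L\subseteq K$ this forces $K=L=Z(\mathfrak{G})$, so $\mathfrak{G}$ induces on $\pi$ exactly $\mathfrak{G}/Z(\mathfrak{G})\cong\PGL(2,q)$.

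The one genuinely delicate point is producing the \emph{correctly normalised} matrices — for instance the naive diagonal lift $(X_0,X_1,X_2,X_3)\mapsto(X_0,\rho X_1,\rho X_2,\rho X_3)$ of $M_\rho$ preserves neither the form of (\ref{eq017jun2017}) nor $\cX^+$ — and checking once that the parity of $\ha(q+1)$ makes all the square / non-square sign conditions mutually compatible; everything beyond that is routine verification together with the transport of the already-proven structure of $\Gamma$ and $Z(\Gamma)$ through $\mathfrak{G}\cong\Gamma$.
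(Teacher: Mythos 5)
Your proposal is correct and follows essentially the paper's own proof: your five collineations are, up to projective normalisation, exactly the matrices $\mathbf{T}_\alpha,\mathbf{M}_\rho,\mathbf{N}_\sigma,\mathbf{R}_\mu,\mathbf{L}_\lambda$ used there, and your verification that they lie in $\PGU(4,q)$, preserve $\cX^{\pm},\Delta^{\pm},\Omega,\cG_1$ and realise the maps (i)--(v) on $\cG_1$, your fixed-locus/B\'ezout argument for faithfulness, and the transport of the structure of $\Gamma$ and $Z(\Gamma)$ through Lemma~\ref{lem2lug} all mirror the paper's argument. The only (harmless) divergence is in (v)--(vi): you obtain (v) directly from the induced linear-fractional action on the conic parameter and then deduce (vi) by comparing the kernel of the $\Omega$-action with $L=Z(\mathfrak{G})$ via an index count, whereas the paper identifies the nucleus of the action on $\pi$ inside the cyclic homology group with centre $X_\infty$ and axis $\pi$ and invokes the fact that $\PGL(2,q)$ has no cyclic normal subgroup.
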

\begin{proof}
For any $\alpha\in \mathbb{F}_q$, and for any square $\rho$ in $\mathbb{F}_q^*$
let $$ {\mathbf{T}}_{a}= \left(
  \begin{array}{cccc}
    1   & 0 & 0  & 0 \\
    0   & 1 & 0  & 0 \\
    \alpha   & 0 & 1  & 0 \\
    \alpha^2 & 0 & 2\alpha & 1 \\
  \end{array}\right); \qquad
{\mathbf{M}}_{\rho}=\left(
  \begin{array}{cccc}
    \rho^{-1} & 0 & 0 & 0 \\
    0 & 1 & 0 & 0 \\
    0 & 0 & 1 & 0 \\
    0 & 0 & 0 & \rho
  \end{array}\right).
  $$
  For any  $\lambda\in \fq$ with $\lambda^{(q+1)/2}=1$, for any $\sigma\in \fq$ with $\sigma^{(q+1)/2}=-1$, and for any non-square $\mu$ in $\mathbb{F}_q$
  $$
{\mathbf{L}}_{\lambda}=\left(
  \begin{array}{cccc}
    1 & 0 & 0 & 0 \\
    0 & \lambda & 0 & 0 \\
    0 & 0 & 1 & 0 \\
    0 & 0 & 0 & 1
  \end{array}\right); \qquad
{\mathbf{N}}_\sigma=\left(\begin{array}{cccc}
    0 & 0 & 0 &1  \\
    0 & \sigma & 0 &0  \\
    0 & 0 & 1 &0  \\
    1 & 0 & 0 &0
  \end{array}\right); \qquad
{\mathbf{R}}_{\mu}=\left(
  \begin{array}{cccc}
    0 & 0 & 0 & \mu^{-1} \\
    0 & 1 & 0 & 0 \\
    0 & 0 & 1 & 0 \\
    \mu & 0 & 0 & 0
  \end{array}\right).
  $$
A straightforward computation shows that following facts on the linear collineations of $\PGU(4,q)$ associated with the above matrices: They preserve the curves $\cX^+$, $\cX^-$, the point-sets  $\Delta^+$, $\Delta^-,\Omega$, and the generator set $\cG_1$; their actions on $\cG_1$ coincide with the actions of the corresponding maps (i), (ii), (iii), (iv), (v), defined in Subsection \ref{sub1}. Let $\mathfrak{G}$ denote the subgroup of $\PGU(4,q)$ generated by all these collineations.  Assume that a non-trivial element $\mathfrak{g}\in \mathfrak{G}$ fixes $\Delta^+$ pointwise. Then $\Delta^+$ is contained in a plane of $\PG(3,\fq)$. Since the curve $\cX^+$ has degree $q+1$, this yields that $|\Delta^+|\leq \ha(q+1)$ contradicting $|\Delta^+|=\ha(q^3-q)$. Similarly for $\Delta^-$. Finally, assume that  a non-trivial element $\mathfrak{g}\in \mathfrak{G}$ fixes each generator in $\cG_1$. Since through every point in $\Delta^+$ there exist more than one generators of $\cG_1$, this yields that $\mathfrak{g}$ fixes $\Delta^+$ pointwise; a contradiction.

 To show (v) look at the action of $\mathfrak{G}$ on the plane $\pi$ of equation $X_1=0$. If $\mathfrak{N}$ is the nucleus of the permutation representation of $\mathfrak{G}$ on $\pi$ then $\mathfrak{N}$ contains the cyclic group $\mathfrak{L}$ of order $\ha(q+1)$ consisting of the linear collineations associated with the matrices $L_\lambda$. Observe that $\mathfrak{N}$ is the (cyclic) homology group of $\cU_3$ with center $X_\infty$ and axis $\pi$. Also,
 $\mathfrak{L}$ is contained in $\mathfrak{G}$. Furthermore, $\overline{\mathfrak{G}}=\mathfrak{G}/\mathfrak{N}$ is a linear collineation group of $\pi$ preserving the conic $\cC$ of equation $X_0X_3-X_2^2=0$. Since $\Omega$ is the set of all $\mathbb{F}_q$-rational points of $\cC$ and $\overline{\mathfrak{G}}$ preserves $\Omega$, it turns out that $\overline{\mathfrak{G}}$ is a linear collineation group of $PG(2,q)$. We show that this yields $\overline{\mathfrak{G}}\cong \PGL(2,q)$ and $\mathfrak{N}=\mathfrak{L}$. On one hand, the factor group $\mathfrak{G}/\mathfrak{L}$ contains $\mathfrak{N}/\mathfrak{L}$ as a cyclic normal subgroup. On the other hand $\mathfrak{G}\cong\Gamma$ by (iii) and $\mathfrak{G}/Z(\mathfrak{G})\cong \PGL(2,q)$ with $Z(\mathfrak{G})=\mathfrak{L}$ by Lemma \ref{lem2lug}. Since $\PGL(2,q)$ has no cyclic normal subgroup, this proves indeed that  $\mathfrak{N}=\mathfrak{L}$ with $Z(\mathfrak{G})=\mathfrak{N}$ whence $\mathfrak{G}/Z(\mathfrak{G})\cong \PGL(2,q)$ also follows.
 \end{proof}
\begin{lemma}
\label{lem5lug2017} $\mathfrak{G}$ has an index $2$  subgroup $\mathfrak{H}$ isomorphic to $\PSL(2,q)\times C_{(q+1)/2}$. In the isomorphism $\mathfrak{G}\cong \Gamma$, $\mathfrak{H}$ and $\Phi$ correspond.
\end{lemma}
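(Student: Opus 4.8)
The plan is to prove the group-theoretic statement for $\Gamma$ first and then carry it over to $\mathfrak{G}$ via the isomorphism $\mathfrak{G}\cong\Gamma$ supplied by Lemma \ref{lem4lug2017}(iv). By Lemma \ref{lem2lug} we have a central extension
\[
1\longrightarrow Z(\Gamma)\longrightarrow\Gamma\longrightarrow\PGL(2,q)\longrightarrow 1,\qquad Z(\Gamma)\cong C_{(q+1)/2}.
\]
The first step is to identify the index $2$ subgroup. The decisive arithmetic remark is that $q\equiv 1\pmod 4$ forces $\ha(q+1)$ to be \emph{odd}, so $Z(\Gamma)$ has odd order; hence $Z(\Gamma)$ maps trivially onto every quotient $C_2$ of $\Gamma$, i.e.\ it is contained in every subgroup of index $2$. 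Since for odd $q$ the group $\PGL(2,q)$ has a unique subgroup of index $2$, namely $\PSL(2,q)$, it follows that $\Gamma$ has a unique subgroup $\Phi$ of index $2$, which is exactly the full preimage of $\PSL(2,q)$; in particular $Z(\Gamma)\leq\Phi$, $Z(\Gamma)$ is central in $\Phi$, and $\Phi/Z(\Gamma)\cong\PSL(2,q)$. (This $\Phi$ is the subgroup generated by the maps (i), (ii), (iii), (v) defining $\Gamma$, since the corresponding collineations all lie in the preimage of $\PSL(2,q)$ — using that $-1$ is a square in $\mathbb{F}_q$ — while their images generate $\PSL(2,q)$.)

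The core of the argument is to show that the central extension $1\to Z(\Gamma)\to\Phi\to\PSL(2,q)\to 1$ splits. I would invoke that $\PSL(2,q)$ is perfect (recall $q\geq 5$) together with the known order of its Schur multiplier $M(\PSL(2,q))$, which is $2$ except in the case $q=9$, where it is $6$. Since $\ha(q+1)$ is odd (and equals $5$ when $q=9$), in every case $\gcd\bigl(|M(\PSL(2,q))|,\ha(q+1)\bigr)=1$. By the universal coefficient theorem for group cohomology with trivial coefficients,
\[
H^2\bigl(\PSL(2,q),Z(\Gamma)\bigr)\cong \mathrm{Ext}^1\bigl(\PSL(2,q)^{\mathrm{ab}},Z(\Gamma)\bigr)\oplus\mathrm{Hom}\bigl(M(\PSL(2,q)),Z(\Gamma)\bigr),
\]
and the right-hand side vanishes: the first summand because $\PSL(2,q)$ is perfect, the second by the coprimality just noted. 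Hence the extension splits, and as $Z(\Gamma)$ is central this yields $\Phi\cong\PSL(2,q)\times Z(\Gamma)\cong\PSL(2,q)\times C_{(q+1)/2}$. A cohomology-free variant runs the same way: set $\Phi_0=[\Phi,\Phi]$; perfectness of $\PSL(2,q)$ gives $\Phi=\Phi_0 Z(\Gamma)$, so $\Phi_0$ is a \emph{perfect} central extension of $\PSL(2,q)$ and its kernel $\Phi_0\cap Z(\Gamma)$ is a quotient of $M(\PSL(2,q))$; being simultaneously a subgroup of the cyclic group $Z(\Gamma)$ of odd order $\ha(q+1)$, it must be trivial, whence $\Phi=\Phi_0\times Z(\Gamma)$ with $\Phi_0\cong\Phi/Z(\Gamma)\cong\PSL(2,q)$.

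Finally, letting $\mathfrak{H}\leq\mathfrak{G}$ be the subgroup that corresponds to $\Phi$ under $\mathfrak{G}\cong\Gamma$ — concretely, the subgroup of $\PGU(4,q)$ generated by the collineations associated with ${\mathbf T}_\alpha$, ${\mathbf M}_\rho$, ${\mathbf N}_\sigma$ and ${\mathbf L}_\lambda$ — we get at once $[\mathfrak{G}:\mathfrak{H}]=2$ and $\mathfrak{H}\cong\PSL(2,q)\times C_{(q+1)/2}$, and $\mathfrak{H}$ and $\Phi$ correspond by construction. The step I expect to be the main obstacle is the splitting of the central extension: this is precisely where the hypothesis $q\equiv 1\pmod 4$ enters essentially, since it is the oddness of $\ha(q+1)$ that makes this number coprime to the ($2$-)part of the Schur multiplier of $\PSL(2,q)$; for $q\equiv 3\pmod 4$ one would be facing a genuinely non-split situation and the direct factor $C_{(q+1)/2}$ would have to be replaced by a non-split central extension of $\PSL(2,q)$.
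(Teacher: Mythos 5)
Your proof is correct, but it takes a genuinely different route from the paper's. The paper argues geometrically: it applies Hering's theorem to the action of $\mathfrak{G}$ on $\Omega$, obtaining that the normal subgroup $p(\mathfrak{G})$ generated by the Sylow $p$-subgroups is either isomorphic to $\PSL(2,q)$ and faithful on $\Omega$, or contains an involution fixing $\Omega$ pointwise; the latter is excluded by a parity argument (such an involution would fix every line through $X_\infty$, while some of these lines meet $\cX^+$ in exactly $\ha(q+1)$ further points, an odd number), and then $\mathfrak{H}=p(\mathfrak{G})\times\mathfrak{L}$ because $\mathfrak{L}$ centralizes $p(\mathfrak{G})$. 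You instead work abstractly with the central extension $1\to Z(\Gamma)\to\Gamma\to\PGL(2,q)\to 1$ furnished by Lemma \ref{lem2lug}: the oddness of $\ha(q+1)$ (exactly where $q\equiv 1\pmod 4$ enters) forces $Z(\Gamma)$ into the unique index-$2$ subgroup $\Phi$, and the splitting of $1\to Z(\Gamma)\to\Phi\to\PSL(2,q)\to 1$ follows from perfectness of $\PSL(2,q)$ together with the coprimality of $\ha(q+1)$ with the Schur multiplier (of order $2$, or $6$ for $q=9$), either through $H^2\bigl(\PSL(2,q),Z(\Gamma)\bigr)=0$ or through your commutator-subgroup variant; centrality then upgrades the splitting to a direct product. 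Both arguments are sound. The paper's proof stays inside the geometric setting at the cost of citing Hering and a curve-theoretic parity count; yours needs only standard facts about $\PSL(2,q)$ and central extensions, makes the role of the congruence on $q$ explicit, and pins down the correspondence $\mathfrak{H}\leftrightarrow\Phi$ directly through the generators $\mathbf{T}_\alpha,\mathbf{M}_\rho,\mathbf{N}_\sigma,\mathbf{L}_\lambda$ (your observation that the image of $N_\sigma$ lies in $\PSL(2,q)$ because $-1$ is a square is the right point, and your uniqueness argument for the index-$2$ subgroup is a small bonus the paper leaves implicit). Your closing speculation that the case $q\equiv 3\pmod 4$ would be genuinely non-split is not substantiated, but it lies outside the statement and does not affect the proof.
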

\begin{proof}  From (v) of Lemma \ref{lem4lug2017}, the $1$-point stabilizer of $\mathfrak{G}$ in its action on $\Omega$
has a normal $p$-subgroup acting on the remaining $q$ points of $\Omega$ as a sharply transitive permutation group. Let $p(\mathfrak{G})$ be the normal subgroup of $\mathfrak{G}$ generated by its Sylow $p$-subgroups. From \cite[Theorem 2.4]{hering}, either $p(\mathfrak{G})$ acts faithfully on $\Omega$ and $p(\mathfrak{G})\cong \PSL(2,q)$, or $p(\mathfrak{G})$ contains an element $\mathfrak{g}$ of order $2$  fixing $\Omega$ pointwise. In the latter case, $\mathfrak{g}$ would leave each line through $X_\infty$ invariant. As some of these lines meets $\cX^+$ in $\ha (q+1)$ points distinct from $X_\infty$ and those in $\pi$, this would imply that $\ha(q+1)$ is an even number, a contradiction. Hence, $p(\mathfrak{G})\cong \PSL(2,q)$. Since $\mathfrak{L}$ commutes with $p(\mathfrak{G})$, it turns out that $\mathfrak{H}=p(\mathfrak{G})\times \mathfrak{L}\cong \PSL(2,q)\times C_{(q+1)/2}$ is a subgroup of $\mathfrak{G}$ of index $2$.
\end{proof}
We end this section with a result about elements of order $2$  in $\mathfrak{G}$ that will be used in Section \ref{secE}. A linear collineation of order $2$ of $\PGU(4,q)$, has always some fixed points. More precisely, two cases can occur for the set $\Sigma$ of fixed points of an involution $\sigma$ in $\PG(3,\fq)$: either $\Sigma$ consists of all points of a plane (axis of $\sigma$) together with a further point (center of $\sigma)$ and each line through the center is preserved by $\sigma$, or $\Sigma$ consists of two skew lines (axes) which are conjugate with respect the polarity associated to $\cU_3$. In the former case, $\sigma$ is a {\emph{homology}}, in the latter one is a {\emph{skew perspectivity}}. The linear collineation associated to $\mathbf{R}_\mu$ is a homology with center $(1,0,0,-\mu^{-1})$ and axis $X_0=\mu X_3$ while both $\mathbf{N}_{-1}$ and $\mathbf{M}_{-1}$ are matrices associated to skew perspectivities. In the former case, the axes are the lines $X_0=X_3,\,X_1=0$ and $X_0=-X_3,\,X_2=0$, in the latter case the axes are $X_0=0,\,X_3=0$ and $X_1=0,X_2=0$. Also, the product $\mathbf{N}_1\mathbf{M}_{-1}$ has order $2$ and it is a matrix associated to
the skew perspectivity with axes $X_0=X_3,\,X_2=0$ and $X_0=-X_3,X_1=0$.
\begin{lemma}
\label{lem7lug2017} The elements of order $2$ in $\mathfrak{H}$ are skew perspectivities while those in $\mathfrak{G}\setminus \mathfrak{H}$ are homologies.
\end{lemma}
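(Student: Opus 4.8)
The strategy is to transfer the question to the familiar conjugacy structure of involutions in $\PSL(2,q)$ and in $\PGL(2,q)$, exploiting that $q\equiv 1\pmod 4$ forces $\ha(q+1)$ to be odd and $-1$ to be a square in $\mathbb{F}_q$. Throughout I would use that being a homology or a skew perspectivity is a conjugation-invariant property in $\PGU(4,q)$ (it depends only on the fixed-point configuration), together with the explicit generating collineations of $\mathfrak{G}$ and their types recorded just before the statement.

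First I would treat $\mathfrak{H}$. By Lemma \ref{lem5lug2017}, $\mathfrak{H}=p(\mathfrak{G})\times\mathfrak{L}$ with $p(\mathfrak{G})\cong\PSL(2,q)$ and $\mathfrak{L}=Z(\mathfrak{G})$ cyclic of odd order $\ha(q+1)$; hence every involution of $\mathfrak{H}$ has trivial $\mathfrak{L}$-component and so lies in $p(\mathfrak{G})$. Since $\PSL(2,q)$ has a single conjugacy class of involutions for $q$ odd, all involutions of $\mathfrak{H}$ are conjugate in $\mathfrak{H}$, a fortiori in $\PGU(4,q)$, so it suffices to exhibit one of them that is a skew perspectivity. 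As $-1$ is a square, $\mathbf{M}_{-1}=\mathrm{diag}(-1,1,1,-1)$ is one of the generating collineations $\mathbf{M}_\rho$ of $\mathfrak{G}$, and its image under the isomorphism $\mathfrak{G}\cong\Gamma$ is the map $M_{-1}$ of item (ii) in the definition of $\Gamma$, which lies in $\Phi$; hence $\mathbf{M}_{-1}\in\mathfrak{H}$. It is a non-trivial involution and was recorded before the statement to be a skew perspectivity, which proves the first assertion.

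Next I would treat $\mathfrak{G}\setminus\mathfrak{H}$, passing to the quotient $\mathfrak{G}/\mathfrak{L}\cong\PGL(2,q)$ of Lemma \ref{lem4lug2017}(vi), in which $\mathfrak{H}/\mathfrak{L}$ maps onto $\PSL(2,q)$. If $x\in\mathfrak{G}\setminus\mathfrak{H}$ is an involution then its image $\bar x$ is non-trivial, for otherwise $x\in\mathfrak{L}$ and $x=1$ since $|\mathfrak{L}|$ is odd; so $\bar x$ is an involution of $\PGL(2,q)\setminus\PSL(2,q)$. The key observation is that the coset $x\mathfrak{L}$ contains no second involution: if $xz$ with $z\in\mathfrak{L}$ were one, then $1=(xz)^2=x^2z^2=z^2$ by centrality of $\mathfrak{L}$, whence $z=1$. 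Consequently, given involutions $x,x'$ of $\mathfrak{G}\setminus\mathfrak{H}$, pick $g\in\mathfrak{G}$ whose image satisfies $\bar g\,\bar x\,\bar g^{-1}=\bar x'$ (possible because the involutions of $\PGL(2,q)\setminus\PSL(2,q)$ form one class); then $gxg^{-1}$ is an involution of $\mathfrak{G}\setminus\mathfrak{H}$ lying over $\bar x'$, so $gxg^{-1}=x'$ by the uniqueness just proved, and thus the involutions of $\mathfrak{G}\setminus\mathfrak{H}$ form a single $\mathfrak{G}$-class. Finally, the generator $\mathbf{R}_\mu$ ($\mu$ a non-square) is a homology by the paragraph preceding the statement, hence not in $\mathfrak{H}$ by the first part, so it is an involution of $\mathfrak{G}\setminus\mathfrak{H}$; therefore every such involution is a homology.

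The one genuinely delicate step is the uniqueness claim inside each $\mathfrak{L}$-coset, which drives the single-class conclusion for $\mathfrak{G}\setminus\mathfrak{H}$; this is precisely where the centrality of $\mathfrak{L}$ and the oddness of $|\mathfrak{L}|=\ha(q+1)$ are used, and it is one of the places where the hypothesis $q\equiv 1\pmod 4$ enters essentially. Everything else is bookkeeping: matching the generating matrices of Lemma \ref{lem4lug2017} with the abstract generators of $\Gamma$ and $\Phi$, and quoting the standard facts that $\PSL(2,q)$ and $\PGL(2,q)\setminus\PSL(2,q)$ each carry a single conjugacy class of involutions and that $\PGU(4,q)$-conjugacy does not mix homologies with skew perspectivities.
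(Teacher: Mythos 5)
Your proposal is correct and follows essentially the same route as the paper: it reduces the problem to the two conjugacy classes of involutions in $\PSL(2,q)$ and $\PGL(2,q)\setminus\PSL(2,q)$, lifts this class structure to $\mathfrak{G}$ using the odd order of the central subgroup $\mathfrak{L}$, and then identifies the geometric type of each class via the explicit representatives $\mathbf{M}_{-1}$ (skew perspectivity in $\mathfrak{H}$) and $\mathbf{R}_\mu$ (homology outside $\mathfrak{H}$) computed before the statement. The only difference is that you spell out the lifting step (uniqueness of the involution in each coset of $\mathfrak{L}$) in more detail than the paper does.
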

\begin{proof} The group $\PGL(2,q)$ has two conjugacy classes of elements of order $2$. One class consists of elements in $\PSL(2,q)$ and they are even permutations. Hence the elements of order $2$ in $\PGL(2,q)\setminus \PSL(2,q)$ form the other class and they are odd permutations. Since $Z(\mathfrak{G})$ has odd order, (iv) of Lemma \ref{lem4lug2017} together with Lemma \ref{lem2lug} show that this picture does not change when the conjugacy classes of elements of order $2$ in $\mathfrak{G}$ and  $\mathfrak{H}$ are considered. As both $\mathbf{N}_{1}$ and $\mathbf{M}_{-1}$, as well as their product are matrices associated to skew perspectivites, they are in $\mathfrak{H}$. Since $\mathbf{R}_\mu$ is a homology, this also shows that $\mathbf{R}_\mu\in\mathfrak{G}\setminus \mathfrak{H}$.
\end{proof}
\begin{remark}
\label{rem26lug} {\em{
A straightforward computation shows that the linear collineation $\mathfrak{w}\in \PGU(4,q)$ associated with the matrix
$$
{\mathbf{W}}=\left(
  \begin{array}{cccc}
    1 & 0 & 0 & 0 \\
    0 & -1 & 0 & 0 \\
    0 & 0 & 1 & 0 \\
    0 & 0 & 0 & 1
  \end{array}\right)
$$
preserves $\cG_1$, and it acts on $\cG_1$ as the map $W$.
It should be noticed however that $\mathfrak{w}$ interchanges $\cX^+$ and $\cX^-$.  Also, $\mathfrak{w}$ preserves $\Omega$, has order $2$ and commutes with $\mathfrak{G}$. The linear group generated by $\mathfrak{G}$ and $\mathfrak{w}$ is the direct product $\mathfrak{G}\times \langle\mathfrak{w}\rangle$.}}
\end{remark}
\subsection{Conditions (C) and (D) on $\Omega$}
\label{subome}
Let $\cG_2$ be the set of all generators meeting $\Omega$.
\begin{lemma}
\label{lemA5lug2017} $\mathfrak{G}$ acts transitively on $\cG_2$ while $\mathfrak{H}$ has two orbits on $\cG_2$.
\end{lemma}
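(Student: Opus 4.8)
The plan is to transfer the transitivity question on $\cG_2$ to a more tractable setting, namely the action on the conic $\cC$ in the plane $\pi$. Recall from Remark \ref{rem30lug} that $\cG_2$ consists of the $(q+1)^2$ generators meeting $\Omega$. By (i) of Lemma \ref{lem18Ajun2017} no two points of $\Omega$ are conjugate, so each generator in $\cG_2$ meets $\Omega$ in exactly one point; through each point of $\Omega$ there pass exactly $q+1$ generators, all of which lie in $\cG_2$. Since $\mathfrak{G}$ acts on $\Omega$ as $\PGL(2,q)$ in its natural $3$-transitive representation by (v) of Lemma \ref{lem4lug2017}, the point-stabilizer $\mathfrak{G}_Q$ for $Q\in\Omega$ acts on the $q+1$ generators through $Q$, and it suffices to show that this action is transitive (for the $\mathfrak{G}$-statement) while the corresponding stabilizer in $\mathfrak{H}$ has exactly two orbits.

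First I would fix $Q\in\Omega$ and analyse the pencil of generators through $Q$, which all lie in the tangent plane $\Pi_Q$ to $\cU_3$ at $Q$. This plane contains the tangent line $\ell$ to the conic $\cC$ at $Q$ and its polar $\ell^{\perp}$ (the two $\mathfrak{G}_Q$-fixed lines of the pencil $\cQ$ at $Q$), plus the $q+1$ generators; the argument is formally identical to the one already carried out in the proof of Proposition \ref{122jun2017}. So I need the cyclic linear group $\bar{\Sigma}$ induced by $\mathfrak{G}_Q$ on this pencil to be regular of order $q+1$ on the generators. Here I would use the explicit matrices of Lemma \ref{lem4lug2017}: conjugating via the $3$-transitivity of $\mathfrak{G}$ on $\Omega$ I may take $Q$ to be a convenient point (e.g. the image under $\varphi$ of a point at infinity of $\cF^+$), compute $\mathfrak{G}_Q$ as generated by the relevant $\mathbf{T}_\alpha$, $\mathbf{M}_\rho$, $\mathbf{L}_\lambda$, and check by direct computation that this stabilizer has order $q^2\cdot\tfrac12(q-1)(q+1)$, of which the subgroup fixing a generator through $Q$ has order $q^2\cdot\tfrac12(q-1)$, so that the quotient acting on the $q+1$ generators is cyclic of order $q+1$ and regular. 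The subgroup $\mathfrak{L}=Z(\mathfrak{G})$ of order $\tfrac12(q+1)$ is contained in $\mathfrak{G}_Q$ for every $Q\in\Omega$ (it fixes $\pi$ pointwise), and it is precisely the index-$2$ subgroup of $\bar{\Sigma}$; since by Lemma \ref{lem5lug2017} $\mathfrak{H}=p(\mathfrak{G})\times\mathfrak{L}$, the stabilizer $\mathfrak{H}_Q$ induces on the pencil exactly $\mathfrak{L}$ together with the identity on generators from the $p(\mathfrak{G})$-part, hence induces the unique index-$2$ subgroup $\bar{\Psi}$ of $\bar{\Sigma}$, which has two orbits of size $\tfrac12(q+1)$ on the generators through $Q$.

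Finally I would assemble these local facts into the global statement. For $\mathfrak{G}$: given two generators $g,g'\in\cG_2$, meeting $\Omega$ in $Q,Q'$ respectively, $3$-transitivity (indeed plain transitivity) of $\mathfrak{G}$ on $\Omega$ moves $Q$ to $Q'$, and then $\bar{\Sigma}$ (regular on the pencil at $Q'$) moves the image of $g$ onto $g'$; hence $\mathfrak{G}$ is transitive on $\cG_2$. For $\mathfrak{H}$: the $p(\mathfrak{G})\cong\PSL(2,q)$ factor acts on $\Omega$ as $\PSL(2,q)$ — still transitive — so $\mathfrak{H}$ is transitive on $\Omega$ as well; thus the $\mathfrak{H}$-orbits on $\cG_2$ correspond to the $\mathfrak{H}_Q$-orbits on the pencil at a fixed $Q$, of which there are exactly two by the previous paragraph. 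Therefore $\mathfrak{H}$ has exactly two orbits on $\cG_2$, each of size $\tfrac12(q+1)^2$. The main obstacle is the explicit verification that $\bar{\Sigma}$ is \emph{regular} (not merely transitive) on the $q+1$ generators through $Q$ — i.e.\ that the generator-stabilizer in $\mathfrak{G}_Q$ has the predicted order and contains no element inducing a nontrivial rotation of the pencil; this is a finite computation with the matrices above, but it must be done carefully because it is exactly what distinguishes the $q+1$ orbit (for $\mathfrak{G}$) from the two orbits of size $\tfrac12(q+1)$ (for $\mathfrak{H}$), and hence underlies Condition (D) on $\Omega$.
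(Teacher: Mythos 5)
Your overall strategy is the same as the paper's (reduce to one point of $\Omega$ by transitivity of $\mathfrak{G}$ and $\mathfrak{H}$ on $\Omega$, then analyse the stabilizer's action on the pencil of $q+1$ generators through that point), but as written the plan has a genuine gap exactly at the step that matters. You propose to compute $\mathfrak{G}_Q$ "as generated by the relevant $\mathbf{T}_\alpha$, $\mathbf{M}_\rho$, $\mathbf{L}_\lambda$". Those collineations (with $\rho$ a square, as in Lemma \ref{lem4lug2017}) all lie in $\mathfrak{H}$, and for $Q=Z_\infty$ they generate precisely $\mathfrak{H}_{Z_\infty}$, of order $\qa q(q-1)(q+1)$: the $\mathbf{T}_\alpha,\mathbf{M}_\rho$ part fixes every generator $g_k$ through $Z_\infty$, and the $\mathbf{L}_\lambda$ part acts by $g_k\mapsto g_{\lambda k}$ with $\lambda^{(q+1)/2}=1$, so the induced group on the pencil has order $\ha(q+1)$ and \emph{two} orbits. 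From these generators alone you can therefore never obtain transitivity of $\mathfrak{G}_Q$ on the pencil, which is the first half of the statement. The missing ingredient is an explicit element of $\mathfrak{G}_{Q}\setminus\mathfrak{H}_{Q}$: in the paper this is the collineation associated with $\mathbf{N}_{-1}\mathbf{R}_\mu$, which fixes $Z_\infty$, lies outside $\mathfrak{H}$, and sends $g_k$ to $g_{-k}$; since $\ha(q+1)$ is odd, $-1$ is not among the admissible $\lambda$, so this element fuses the two half-pencils and yields a sharply transitive induced group of order $q+1$, while the $\mathfrak{H}$-part induces exactly the index-$2$ subgroup. Your claim that the $p(\mathfrak{G})$-part of $\mathfrak{H}_Q$ induces the identity on the pencil is also not justified as stated (an element of $\mathfrak{H}_Q$ could a priori combine a nontrivial $p(\mathfrak{G})$-component with a nontrivial $\mathfrak{L}$-component); what one actually needs, and what the paper's claim (A) supplies, is that the kernel of the pencil action is the subgroup of order $\ha q(q-1)$ generated by the $\mathbf{T}_\alpha,\mathbf{M}_\rho$ collineations and that this kernel lies in $\mathfrak{H}$.

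There is also a numerical slip that would show up the moment you carry out the announced order count: $|\mathfrak{G}|=\ha q(q-1)(q+1)^2$, so $|\mathfrak{G}_Q|=\ha q(q-1)(q+1)$ and the stabilizer of a generator through $Q$ has order $\ha q(q-1)$, not $q^2\cdot\ha(q-1)(q+1)$ and $q^2\cdot\ha(q-1)$ as you state (both off by a factor of $q$; the ratio $q+1$ happens to be right, but the counts themselves are wrong). The rest of your assembly step (transitivity of $\mathfrak{H}$ on $\Omega$ via $p(\mathfrak{G})\cong\PSL(2,q)$, each generator of $\cG_2$ meeting $\Omega$ in exactly one point, two $\mathfrak{H}$-orbits of size $\ha(q+1)^2$) is fine once the local analysis at $Q$ is repaired as above.
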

\begin{proof} Lemma \ref{lem2lug} together with the second claim in (iv) of Lemma \ref{lem4lug2017} show that the $1$-point stabilizer of $\mathfrak{G}$ in $\Omega$ has order $\ha (q+1)q(q-1)$.
Since $Z_\infty=(0,0,0,1) \in\Omega$ and $\mathfrak{H}$ is also transitive on $\Omega$, we may limit ourselves to the generators through $Z_\infty$. They are the $q+1$ lines with parametric equations
\begin{equation}
\label{eqA1agosto}
 g_k=\,
\begin{cases}
X_0=0,\\
X_1=k,\\
X_2=1,\\
X_3=T
\end{cases}
\end{equation}
where $k\in \fq$ runs over the $q+1$ roots of the polynomial $X^{q+1}+2$. A straightforward computation shows the following facts:
\begin{itemize}
\item[(A)] $\mathbf{T}_\alpha$ and $\mathbf{M}_\rho$ with $\rho$ a non-zero square in $\mathbb{F}_q$ are matrices of linear collineations in $\mathfrak{H}$ fixing $Z_\infty$, preserving each $g_k$ and generating a subgroup of order $\ha q(q-1)$ contained in $\mathfrak{H}$. This subgroup has three non-trivial orbits on $g_k$. One consists of the $q-1$ points $U_T=(0,k,1,T)$ with $T\in \mathbb{F}_q^*$, the other two orbits consist each of $\ha(q^2-q)$ points $U_T$. More precisely, for a fixed $b\in \fq\setminus \mathbb{F}_q$, the point $U_T$ with $T=\alpha+\beta b$ and $\alpha,\beta \in \mathbb{F}_q$ falls into one or the other orbit according as $\beta$ is a non-zero square or a non-square element in $\mathbb{F}_q$. The latter two orbits are fused in a unique orbit under the action $\mathfrak{w}$. Therefore, the subgroup of $\mathfrak{G}\times \langle \mathfrak{w} \rangle$ fixing each $g_k$ has two nontrivial orbits on $g_k$.
\item[(B)] The linear collineation associated to the matrix $\mathbf{N}_{-1}\mathbf{R}_\mu$ also fixes $Z_\infty$ but it takes each $g_k$ to $g_{-k}$ and is contained in $\mathfrak{G}\setminus \mathfrak{H}$.
\item[(C)] The linear collineation associated to the matrix $\mathbf{L}_{\lambda}$ with $\lambda^{(q+1)/2}=1$ is in $\mathfrak{H}$, fixes $Z_\infty$ and takes each $g_k$ to $g_{\lambda k}$.
\item[(D)] The linear collineation associated to the matrix $\mathbf{W}$ fixes $Z_\infty$ and takes each $g_k$ to $g_{-k}$.
\end{itemize}
Therefore $\mathfrak{G}_{Z_\infty}$ induces a sharply transitive permutation group on the generator set through $Z_\infty$ while its subgroup $\mathfrak{H}_{Z_\infty}$ has two orbits.
\end{proof}
\begin{remark}
\label{rem2agos2017} {\em{The proof of Lemma \ref{lemA5lug2017} also shows that the subgroup of $\mathfrak{G}$ fixing $Z_\infty$ has three non-trivial orbits on the point-set cut out from $\cU_3$ by the tangent plane $\pi_{Z_\infty}$ to $\cU_3$ at $Z_{\infty}$: One orbit consists of $q^2+q$ points in $\PG(3,\mathbb{F}_q)$ while each of the other two orbits consists of $\ha(q^3-q)$ points in $\PG(3,\fq)\setminus \PG(3,\mathbb{F}_q)$ and $\mathfrak{G}$ acts faithfully on both as sharply transitive permutation group. The latter two orbits are fused in a unique one under the action of the subgroup of $\mathfrak{G}\times \langle\mathfrak{w}\rangle$ which fixes $Z_\infty$. }}
\end{remark}

\subsection{Conditions (C) and (D) on $\fq(\cX^+)$.}
Since $\Gamma$ has exactly two orbits on $\cG$, namely $\cG_1$ and $\cG_2$, the results obtained in Subsections \ref{sub1}, \ref{subcol} and \ref{subome} have the following corollary.
\begin{theorem}
\label{teo6lug2017} Conditions $\rm{(C)}$ and $\rm{(D)}$ are fulfilled for $\cX=\cX^+$,  $\Gamma=\mathfrak{G}$ and $\Phi=\mathfrak{H}$.
\end{theorem}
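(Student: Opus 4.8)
The plan is to assemble Theorem \ref{teo6lug2017} from the three pieces established in Subsections \ref{sub1}, \ref{subcol}, and \ref{subome}, fitting them into the abstract framework of Proposition \ref{122jun2017}. Recall that Conditions (C) and (D) concern a subgroup $\mathfrak{G}$ of $\aut(\cX)$ together with an index-$2$ subgroup $\mathfrak{H}$, and they require: (C) that $\mathfrak{G}$ and $\mathfrak{H}$ have the same orbits $o_1,\ldots,o_r$ on $\cX(\mathbb{F}_{q^2})$; and (D) that on each generator-orbit $\cG_j$ (the generators meeting $o_j$), $\mathfrak{G}$ acts transitively while $\mathfrak{H}$ splits $\cG_j$ into two orbits. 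So the first step is to take $\cX=\cX^+$, $\mathfrak{G}$ as the linear collineation group of Lemma \ref{lem4lug2017} (which is an automorphism group of $\cX^+$ by part (i) there, hence a subgroup of $\aut(\cX^+)$ via \cite[Theorem 3.7]{KT}), and $\mathfrak{H}$ as its index-$2$ subgroup isomorphic to $\PSL(2,q)\times C_{(q+1)/2}$ from Lemma \ref{lem5lug2017}. Under the isomorphism $\mathfrak{G}\cong\Gamma$ of Lemma \ref{lem4lug2017}(iv), $\mathfrak{H}$ corresponds to $\Phi$.

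Next I would identify the orbit structure. Since $\xpfq2=\Delta^+\cup\Omega$ with $\Delta^+$ and $\Omega$ both $\mathfrak{G}$-invariant point-sets (Lemma \ref{lem4lug2017}(ii)), and since $\mathfrak{G}$ acts transitively on each — on $\Omega$ because it acts as $\PGL(2,q)$ in its natural action (Lemma \ref{lem4lug2017}(v)), and on $\Delta^+$ because $\Gamma$ is sharply transitive on $\cG_1$ with each point of $\Delta^+$ lying on several generators of $\cG_1$, forcing transitivity on $\Delta^+$ — we get $r=2$ with $o_1=\Delta^+$, $o_2=\Omega$. Then the corresponding generator-orbits are exactly $\cG_1$ (generators meeting $\Delta^+$, by Remark \ref{rem30lug}) and $\cG_2$ (generators meeting $\Omega$). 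For Condition (C) I then need $\mathfrak{H}=\Phi$ to have the same two orbits on $\xpfq2$: on $\Omega$ this follows from $\PSL(2,q)\leq p(\mathfrak{G})\leq\mathfrak{H}$ already being transitive (used in the proof of Lemma \ref{lemA5lug2017}); on $\Delta^+$ it follows because $\Phi$ has exactly two orbits $\cM_1,\cM_1'$ on $\cG_1$ (stated just before Lemma \ref{lem19lug2017}) and $W\in\Gamma\setminus\Phi$ interchanges them (Lemma \ref{lem19lug2017}), so $\mathfrak{H}$ is transitive on the union $\cG_1$ and hence on $\Delta^+$. Thus Condition (C) holds.

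Finally, Condition (D): for $o_1=\Delta^+$ and $\cG_1$, the transitivity of $\mathfrak{G}\cong\Gamma$ on $\cG_1$ is precisely the sharp transitivity established via Lemmas \ref{lem1Alug1} and \ref{lem2lug}, and the fact that $\mathfrak{H}\cong\Phi$ has exactly two orbits on $\cG_1$ is the content of Lemma \ref{lem19lug2017} together with the index-$2$ statement preceding it. For $o_2=\Omega$ and $\cG_2$, both halves of Condition (D) are exactly Lemma \ref{lemA5lug2017}: $\mathfrak{G}$ transitive on $\cG_2$, $\mathfrak{H}$ with two orbits on $\cG_2$. Assembling these, Conditions (C) and (D) are verified for $\cX=\cX^+$, $\Gamma=\mathfrak{G}$, $\Phi=\mathfrak{H}$, which is the assertion. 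I do not anticipate a genuine obstacle here — the theorem is a bookkeeping consolidation — but the one point that needs care is matching the abstract ``$\cM=\cup\,\cM_j$'' half-hemisystem structure of Proposition \ref{122jun2017} with the concrete orbits: one must check that the two $\mathfrak{H}$-orbits on each $\cG_j$ are interchanged by a single element of $\mathfrak{G}\setminus\mathfrak{H}$ (so that Condition (A) of the half-hemisystem really follows), which is guaranteed because $\mathfrak{G}/\mathfrak{H}$ has order $2$ and the witnessing elements are exhibited ($W$ for $\cG_1$, the collineation of $\mathbf{N}_{-1}\mathbf{R}_\mu$ for $\cG_2$).
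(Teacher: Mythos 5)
Your overall assembly follows the paper's own route: Theorem \ref{teo6lug2017} is presented there as a direct corollary of Subsections \ref{sub1}, \ref{subcol} and \ref{subome}, with $o_1=\Delta^+$, $o_2=\Omega$, the generator-sets $\cG_1,\cG_2$, Condition (D) on $\cG_1$ coming from the sharp transitivity of $\Gamma\cong\mathfrak{G}$ (Lemma \ref{lem2lug}) together with the index-two subgroup $\Phi\cong\mathfrak{H}$, and Condition (D) on $\cG_2$ from Lemma \ref{lemA5lug2017}; that part of your write-up, as well as Condition (C) on $\Omega$, is fine.

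There is, however, a genuine flaw in your verification of Condition (C) on $\Delta^+$. You argue that since $\Phi$ has the two orbits $\cM_1,\cM_1'$ on $\cG_1$ and ``$W\in\Gamma\setminus\Phi$'' interchanges them, ``$\mathfrak{H}$ is transitive on $\cG_1$ and hence on $\Delta^+$''. Both steps are wrong: $\mathfrak{H}$ is by construction \emph{not} transitive on $\cG_1$ (it has exactly the two orbits $\cM_1,\cM_1'$ --- the very fact you invoke for Condition (D) a few lines later), and $W$ is not an element of $\Gamma$ at all: the collineation $\mathfrak{w}$ inducing $W$ lies outside $\mathfrak{G}$ and generates a direct factor $\mathfrak{G}\times\langle\mathfrak{w}\rangle$ (Remark \ref{rem26lug}), which is precisely why Lemma \ref{lem19lug2017} needs a separate proof. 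Since $\mathfrak{G}$ is transitive on $\Delta^+$, Condition (C) does require $\mathfrak{H}$ to be transitive on $\Delta^+$, and this needs its own (short) argument; the paper leaves it implicit but asserts nothing false. One clean repair: the homology associated with $\mathbf{R}_\mu$ lies in $\mathfrak{G}\setminus\mathfrak{H}$ by Lemma \ref{lem7lug2017} and fixes every point $P_{u,v}=(1,u,v,v^2)\in\Delta^+$ with $v^2=\mu$; such points exist because $v^q=-v$ and $(-2v)^{2(q-1)}=(4\mu)^{q-1}=1$ guarantees an admissible $u\in\fq$ with $u^{(q+1)/2}=v^q-v$. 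For such a $P$ one gets $[\mathfrak{G}_P:\mathfrak{H}_P]=2=[\mathfrak{G}:\mathfrak{H}]$, hence the $\mathfrak{H}$-orbit of $P$ coincides with the $\mathfrak{G}$-orbit $\Delta^+$. (Alternatively, the maps (i)--(iii) in $\Phi$ induce on the coordinate $v$ the group $\PSL(2,q)$, transitive on $\fq\setminus\mathbb{F}_q$, while the maps $L_\lambda\in\Phi$ are transitive on the $\ha(q+1)$ admissible values of $u$ over a fixed $v$.) Finally, your closing concern is moot: once $\mathfrak{G}$ is transitive on $\cG_j$ and $\mathfrak{H}$ has exactly two orbits there, \emph{every} element of $\mathfrak{G}\setminus\mathfrak{H}$ interchanges the two orbits, so no special witness is needed --- and in any case $W$ could not serve as one, since it does not belong to $\mathfrak{G}$.
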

\begin{remark}
\label{remA3agos} {\em{
 $\cG=\cG_1\cup \cG_2$ with $\cG_1=\cM_1\cup \cM_1'$ and $\cG_2=\cM_2\cup \cM_2'$ where $\cG_1,\cG_2$ are the $\mathfrak{G}$-orbits on $\cG$ whereas $\cM_1,\cM_1',\cM_2,\cM_2'$ are $\mathfrak{H}$-orbits
on $\cG_1$ and $\cG_2$ respectively. This notation fits with Section \ref{hema}.}}
\end{remark}
\section{Points satisfying Condition (E)}
\label{secE}
It is useful to look at the plane $\pi$ of equation $X_1=0$ as the projective plane $\PG(2,q^2)$ with homogenous coordinates $(X_0,X_2,X_3)$. Then $\cC$ is the conic of equation $X_0X_3-X_2^2=0$, and $\Omega$ is the set of points of $\cC$ lying in the (canonical Baer) subplane $\PG(2,q)$. Any line of $\PG(2,q)$, viewed as a line in $\PG(2,q^2)$, has $q^2-q$ points outside $\PG(2,q)$ and each such point is incident with a unique line of $PG(2,q)$. Therefore the points in  $\PG(2,q^2)\setminus \PG(2,q)$ are of three types with respect to lines of $PG(2,q)$, namely
\begin{itemize}
\item[(I)] point on a (unique) line disjoint from $\Omega$ which meets $\cC$ in two (distinct) points both in $\PG(2,q^2)\setminus \PG(2,q)$;
\item[(II)] point on a (unique) line meeting $\Omega$ in two distinct points;
\item[(III)] point on a (unique) line which is tangent to $\cC$ with tangency point in $\Omega$;
\end{itemize}
Furthermore, the linear collineation group $\Lambda$ of $\PG(2,q)$ preserving $\Omega$ (and $\cC$ when viewed as a collineation group  of $\PG(2,q^2)$) is isomorphic to $\PGL(2,q)$ and acts on $\Omega$ as $\PGL(2,q)$ in its $3$-transitive permutation representation. Let $\Psi\cong \PSL(2,q)$ be the unique index $2$ subgroup of $\Lambda$. From (vi) of Lemma \ref{lem4lug2017}, $\Lambda$ may be identified with the quotient group $\mathfrak{G}/Z(\mathfrak{L})$ so that $\Psi$ coincides with $\mathfrak{H}/Z(\mathfrak{L})$.
Take a line $\ell$ of $\PG(2,q)$ viewed as a line of $\PG(2,q^2)$.

\subsection{Case (I)} Suppose $\ell$ to meet $\cC$ in two distinct points in $\PG(2,q^2)\setminus \PG(2,q)$. Obviously, $\ell$ is an external line to $\Omega$. Since $q\equiv 1 \pmod 4$, $\Lambda\setminus \Psi$ contains an element $g$ of order $2$ that fixes $\ell$ pointwise. Therefore, $\mathfrak{G}$ contains
a subgroup of order $q+1$ containing $\mathfrak{L}$ but not contained in $\mathfrak{H}$. Since $\mathfrak{L}=Z(\mathfrak{G})$, such a subgroup contains a (unique) element $\mathfrak{g}\in \mathfrak{G}\setminus \mathfrak{H}$ of order $2$ that acts on $\PG(2,q^2)$ as $g$.
From Lemma \ref{lem7lug2017}, $\mathfrak{g}$ is a homology. The axis of $\mathfrak{g}$ is the plane through $\ell$ and $X_\infty$ while its center is the pole of $\ell$ with respect to the orthogonal polarity associated with $\cC$.  Therefore, if $P'$ is a point of type (I), then $\mathfrak{g}$ fixes each point $P$ of the line $r$ through $P'$ and $X_\infty$. We also have that $\mathfrak{g}$ interchanges $\cM_1$ and $\cM_1'$.

\subsection{Case (II)} Suppose $\ell$ to meet $\Omega$ in two distinct points. Since $q\equiv 1 \pmod 4$, $\Psi$ contains an element $g$ of order $2$ that fixes $\ell$ pointwise. Therefore, $\mathfrak{H}$ contains a subgroup of order $q+1$. By $\mathfrak{L}=Z(\mathfrak{G})$, such a subgroup contains a (unique) element $\mathfrak{g}\in \mathfrak{H}$ of order $2$ that acts on $\PG(2,q^2)$ as $g$. From Lemma \ref{lem7lug2017}, $\mathfrak{g}$ is a skew perspectivity. One of the lines consisting of fixed points of $\mathfrak{g}$ is $\ell$, the other is the line $r$ through $X_\infty$ and the pole of $\ell$ with respect to orthogonal polarity associated with $\cC$. Therefore, if $P'$ is a point of type (II), then $P'$ and $X_\infty$ are the unique fixed points of $\mathfrak{g}$ on the line $r$. The linear collineation $\mathfrak{w}$ associated with $W$ also preserves $r$ and fixes both $P'$ and $X_\infty$, as $\mathfrak{w}$ is a homology with axis $\pi$ and center $X_\infty$. The collineation group $\mathfrak{S}$ generated by $\mathfrak{g}$ and $\mathfrak{w}$ is an elementary abelian group of order $4$. Since $|r\cap \cU_3|=q+1$ but $X_\infty,P'\not\in \cU_3$, our hypothesis
$q\equiv 1 \pmod 4$ yields that a nontrivial element $\mathfrak{s}\in\mathfrak{S}$ fixes a point in $r\cap \cU_3$ and hence $\mathfrak{s}$ fixes each point $P$ of $r$. Since $\mathfrak{s}\neq \mathfrak{w}$, we also have that $\mathfrak{s}$ interchanges $\cM_1$ and $\cM_1'$.

\subsection{Case (III)} Suppose $\ell$ to be a tangent to $\cC$ (at a point in $\Omega$). Let $P'$ be a point of $\ell$ in $\PG(2,q^2)\setminus \PG(2,q)$, and let $P\in \cU_3$ be a point whose projection from $X_\infty$ is $P'$. Then the stabilizer of $P'$ in $\Lambda$ is trivial. Therefore, no element $\mathfrak{G}$ fixing $P$ can interchange $\cM_1$ and $\cM_1'$. Furthermore,  if the tangency point is $Z_\infty$, it follows from claim (A) in the proof of Lemma \ref{lemA5lug2017} that the points $P\in \cU_3$ whose projection $P'$ lies on $\ell$ form two orbits under the action of $\mathbb{G}\times\langle \mathfrak{w}\rangle$. One orbit consists of all points with $P'\in \PG(2,\mathbb{F}_q)$  while those with $P'\in \PG(2,\fq)\setminus \PG(2,\mathbb{F}_q)$ form the other orbit. As $\mathfrak{G}$ is transitive one $\Omega$ this holds true for any tangency point, that is, for any tangent to $\cC$ at a point in $\Omega$.
\begin{theorem}
\label{em24lug2017} If the projection of a point $P\in \cU_3$ on $\pi$ is a point $P'$ of type $\rm{(I)}$ or $\rm{(II)}$, or $P'\in \PG(2,q)$  then Condition $\rm{(E)}$ is fulfilled  for $\cX=\cX^+$,  $\Gamma=\mathfrak{G}$ and $\Phi=\mathfrak{H}$.
\end{theorem}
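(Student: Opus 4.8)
The plan is to assemble the three sub-cases of this section together with the extra bookkeeping for points whose projection lies in $\PG(2,q)$. Recall (Remark \ref{rem29jun}) that Condition (E) for a point $P$ asserts that $\mathfrak{G}_P\not\subseteq\mathfrak{H}_P$, and (Remark \ref{remA29jun}) that, since $\mathfrak{G}\times\langle\mathfrak{w}\rangle$ preserves $\cG=\cG_1\cup\cG_2$ (Remark \ref{rem26lug}) and contains $\mathfrak{H}$ as a normal subgroup, it is equally good to exhibit an involution in $(\mathfrak{G}\times\langle\mathfrak{w}\rangle)_P\setminus\mathfrak{H}_P$ interchanging the $\mathfrak{H}$-orbits $\cM_1,\cM_1'$ (and $\cM_2,\cM_2'$). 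Throughout we use that $\mathfrak{G}$ acts on $\pi$ as $\Lambda\cong\PGL(2,q)$ with $\mathfrak{H}$ acting as $\Psi\cong\PSL(2,q)$ (Lemma \ref{lem4lug2017}), that $\mathfrak{L}=Z(\mathfrak{G})$ has odd order $\ha(q+1)$, so a planar involution lifts to a unique involution of $\mathfrak{G}$, lying in $\mathfrak{G}\setminus\mathfrak{H}$ or in $\mathfrak{H}$ according to whether its planar image lies in $\Lambda\setminus\Psi$ or in $\Psi$, and finally that, as $q\equiv1\pmod4$, an involution of $\Lambda$ fixing a line $\ell$ of $\PG(2,q)$ pointwise lies in $\Lambda\setminus\Psi$ when $\ell$ is external to $\Omega$ and in $\Psi$ when $\ell$ is secant to $\Omega$.

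\emph{Type (I).} Then $P'$ lies on a unique line $\ell$ of $\PG(2,q)$ external to $\Omega$, so some involution of $\Lambda\setminus\Psi$ fixes $\ell$ pointwise, and by Lemma \ref{lem7lug2017} its lift $\mathfrak{g}\in\mathfrak{G}\setminus\mathfrak{H}$ is a homology with axis the plane $\langle\ell,X_\infty\rangle$. Since $P'\in\ell$ and $X_\infty$ both lie on this axis, the line $r=\langle P',X_\infty\rangle$ lies inside it, hence $\mathfrak{g}$ fixes $r$ pointwise and thus fixes $P$; so $\mathfrak{g}\in\mathfrak{G}_P\setminus\mathfrak{H}_P$ and Condition (E) holds.

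\emph{Type (II) or $P'\in\PG(2,q)$.} If $P'$ is of type (II), let $\ell$ be the unique line of $\PG(2,q)$ through $P'$ secant to $\Omega$; if $P'\in\PG(2,q)\setminus\cC$, choose any secant line $\ell$ of $\cC$ through $P'$ (one exists, since every point of $\PG(2,q)$ off $\cC$ lies on a secant); the case $P'\in\Omega$ is vacuous, because $X_\infty$ lies on the tangent plane to $\cU_3$ at every point of $\cU_3\cap\pi$, so $\langle P',X_\infty\rangle$ meets $\cU_3$ only in $P'\in\cX^+(\fq)$. In the remaining cases the planar involution fixing $\ell$ pointwise lifts to an involution $\mathfrak{g}\in\mathfrak{H}$, a skew perspectivity (Lemma \ref{lem7lug2017}) whose two axes are $\ell$ and a line through $X_\infty$. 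Set $r=\langle P',X_\infty\rangle$; then $r$ meets both axes of $\mathfrak{g}$ (at $P'$ and at $X_\infty$), so $\mathfrak{g}$ fixes $r$ exactly at $P'$ and $X_\infty$, and the homology $\mathfrak{w}$ (center $X_\infty$, axis $\pi$) likewise fixes $r$ exactly at $P'$ and $X_\infty$. As $\mathfrak{w}$ commutes with $\mathfrak{g}$, the group $\mathfrak{S}=\langle\mathfrak{g},\mathfrak{w}\rangle$ is elementary abelian of order $4$, it preserves $r$ and $\cU_3$, and (otherwise $\mathfrak{w}$ would fix a tangency point of $r$ distinct from $P',X_\infty$) the line $r$ is secant to $\cU_3$; so $\mathfrak{S}$ acts on the $q+1$ points of $r\cap\cU_3$, none of which is $P'$ or $X_\infty$. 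Because $q+1\equiv2\pmod4$, an orbit count forces some nontrivial $\mathfrak{s}\in\mathfrak{S}$ to fix a point of $r\cap\cU_3$; then $\mathfrak{s}$ fixes three distinct collinear points ($P'$, $X_\infty$, and that point), so $\mathfrak{s}$ fixes $r$ pointwise, and in particular $\mathfrak{s}$ fixes $P$. Comparing fixed loci, $\mathfrak{s}$ cannot be $\mathfrak{g}$ (whose two fixed lines both differ from $r$) nor $\mathfrak{w}$ (which fixes pointwise only the plane $\pi\not\supseteq r$ and the point $X_\infty$), so $\mathfrak{s}=\mathfrak{g}\mathfrak{w}$; since $\mathfrak{g}\in\mathfrak{H}\subseteq\mathfrak{G}$ and $\mathfrak{w}\notin\mathfrak{G}$, we get $\mathfrak{s}\in(\mathfrak{G}\times\langle\mathfrak{w}\rangle)\setminus\mathfrak{G}$, hence $\mathfrak{s}\notin\mathfrak{H}$. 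Finally, $\mathfrak{g}\in\mathfrak{H}$ preserves each of $\cM_1,\cM_1'$ while $\mathfrak{w}$ interchanges them (Lemma \ref{lem19lug2017}), so $\mathfrak{s}$ interchanges $\cM_1$ and $\cM_1'$ (and similarly $\cM_2,\cM_2'$). By Remark \ref{remA29jun} this is exactly what Proposition \ref{122jun2017} needs for $P$, and the proof is complete.

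\emph{Expected obstacle.} The easy case is type (I), where $q\equiv1\pmod4$ hands over directly a homology in $\mathfrak{G}\setminus\mathfrak{H}$ fixing $P$. The delicate part is the type~(II)/$\PG(2,q)$ case: here $q\equiv1\pmod4$ forces the planar involution fixing the chosen secant into $\PSL(2,q)$, so no element of $\mathfrak{G}\setminus\mathfrak{H}$ fixes $P$, and one is compelled to enlarge to $\mathfrak{G}\times\langle\mathfrak{w}\rangle$ and isolate the correct involution $\mathfrak{g}\mathfrak{w}$ through the parity count on the $q+1$ points of $r\cap\cU_3$. Making the fixed-point bookkeeping airtight — that $\mathfrak{s}$ fixes all of $r$, that $\mathfrak{s}$ is $\mathfrak{g}\mathfrak{w}$ and not $\mathfrak{g}$ or $\mathfrak{w}$, and that $\mathfrak{g}\mathfrak{w}$ does not leave $\cM$ invariant — is the crux.
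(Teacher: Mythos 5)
Your treatment of types (I) and (II) coincides with the paper's own discussion preceding the theorem; where you genuinely diverge is the sub-case $P'\in\PG(2,q)\setminus\Omega$, which is exactly the content of the paper's short proof. The paper takes a line of $\PG(2,q)$ through $P'$ \emph{external} to $\Omega$ and repeats the Case (I) argument: the lifted involution is a homology in $\mathfrak{G}\setminus\mathfrak{H}$ whose axis is the plane $\langle\ell,X_\infty\rangle\supseteq r$, so it fixes the whole fibre $r=\langle P',X_\infty\rangle$ pointwise and Condition (E) holds literally, with no parity count and without leaving $\mathfrak{G}$. You instead take a \emph{secant} through $P'$ and rerun the Case (II) machinery ($\mathfrak{g}\in\mathfrak{H}$ a skew perspectivity, the four-group $\langle\mathfrak{g},\mathfrak{w}\rangle$, the orbit count on the $q+1$ points of $r\cap\cU_3$), ending with $\mathfrak{s}=\mathfrak{g}\mathfrak{w}\in(\mathfrak{G}\times\langle\mathfrak{w}\rangle)\setminus\mathfrak{G}$ and an appeal to Remark \ref{remA29jun}. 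This is correct, because for $P'\in\PG(2,q)\setminus\Omega$ one does have $P'\notin\cU_3$ (indeed $\cU_3\cap\pi\cap\PG(2,q)=\Omega$), so $r$ is a secant of $\cU_3$ missing $P'$ and $X_\infty$; and your explicit identification of $\mathfrak{s}$ as $\mathfrak{g}\mathfrak{w}$, together with the observation that it swaps both pairs $\cM_1,\cM_1'$ and $\cM_2,\cM_2'$, is tidier than the paper's ``since $\mathfrak{s}\neq\mathfrak{w}$''. What it buys is less, though: for these points you only obtain the Condition (G) surrogate of (E), whereas the paper's external-line argument produces an element of $\mathfrak{G}_P\setminus\mathfrak{H}_P$, i.e.\ (E) itself (the paper, for its part, only achieves the surrogate in Case (II), so nothing is lost downstream for Condition (B)). Two small points to tighten: the fact that $\mathfrak{w}$ swaps the two $\mathfrak{H}$-orbits on $\cG_2$ is proved in Lemma \ref{lem12ago2017}, not Lemma \ref{lem19lug2017}; and in your type (II) parenthetical the tangency point of $r$ need not be ``distinct from $P'$'', since a type (II) point may lie on $\cU_3\cap\pi$ --- in that case $r$ is the tangent at $P'$, the parity count is unavailable, but the conclusion is immediate because then $P=P'$ is already fixed by $\mathfrak{g}\mathfrak{w}$; the paper's Case (II) asserts $X_\infty,P'\notin\cU_3$ with the same silent gap, so this is a shared blemish rather than a defect of your route.
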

\begin{proof} From the above discussion, we may limit ourselves to the case $P'\in \PG(2,q)$. Since $P'\not\in \Omega$, there exists an external line $\ell$ to $\Omega$ which passes through $P'$. Arguing as in Case (I) shows that if $P$ is any point on the line through $P'$ and $X_\infty$ then $P$ is fixed by an element $\mathfrak{g}\in \mathfrak{G}$ which
interchanges $\cM_1$ and $\cM_1'$.
\end{proof}

\section{Condition (B) for Case (III)}
From a computer aided investigation performed for $5\leq q\leq 101$, it turns out that Condition (B) is rarely satisfied in Case (III), that is, for points $P$ whose projection from $X_\infty$ on $\pi$ is a point $P'$ lying on a tangent $\ell$ to $\cC$. On the other hand, finding some (possibly infinite) values of $q$ for which this does occur is strictly necessary for our construction to produce  a hemisystem of the Hermitian surface of $\PG(3,\fq).$ Here we focus on those values of $q$ which satisfy the hypothesis
\begin{equation}
\label{eqE24lug} {\mbox{$p\equiv  1 \pmod 8$ or $q$ is a an even power of $p$}},
\end{equation}
and prove the following
\begin{theorem}
\label{teo5ago2017B} Condition $\rm{(B)}$ for Case $\rm{(III)}$ is satisfied if and only if the number $N_q$ of $\mathbb{F}_q$-rational points of the elliptic curve
with affine equation $Y^2=X^3-X$ equals either $q-1$, or $q+3$.
\end{theorem}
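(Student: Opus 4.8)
The plan is to reduce Condition (B) for Case (III) to a character-sum identity over $\mathbb{F}_q$ and to identify that sum with the number of rational points of the curve $Y^2=X^3-X$.

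\emph{Reduction and enumeration.} By Remark \ref{rem2agos2017} and claim (A) in the proof of Lemma \ref{lemA5lug2017}, for a fixed tangency point the Case (III) points $P\in\cU_3$ — those whose projection $P'$ from $X_\infty$ lies in $\PG(2,q^2)\setminus\PG(2,q)$ on a tangent to $\cC$ — form two $\mathfrak{G}$-orbits, interchanged by $\mathfrak{w}$, while $\mathfrak{G}$ is transitive on $\Omega$. Since $\mathfrak{w}$ acts on $\cG_1$ as the map $W$, it interchanges $\cM_1$ with $\cM_1'$ and $\cM_2$ with $\cM_2'$; hence Condition (B) holds at a point of one orbit iff it holds at a point of the other, and it suffices to treat one explicit point. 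I would take $\cU_3$ in the form (\ref{eq017jun2017}), the tangency point $Z_\infty=(0,0,0,1)\in\Omega$, the tangent line $\ell:X_0=X_1=0$, and $P=(0,t_0,1,c_0)$ with $t_0^{q+1}=-2$ and $c_0\in\fq\setminus\mathbb{F}_q$. A short computation with the tangent plane $\pi_P$ of $\cU_3$ at $P$ shows $Z_\infty\in\pi_P$ and that $\pi_P\cap\pi$ meets the conic $\cC$ at no point of $\Omega$ other than $Z_\infty$, so exactly one generator through $P$ meets $\Omega$, namely the line $g_{t_0}$ of $\cG_2$ joining $P$ to $Z_\infty$. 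Writing the affine points of $\cX^+$ as $(1,u,v,v^2)$ with $v^q-v=u^{(q+1)/2}$, the incidence $(1,u,v,v^2)\in\pi_P$ forces $v=\ha(c_0^q-t_0^qu)$, and substituting into $v^q-v=u^{(q+1)/2}$ gives
\begin{equation}
\label{eqstarplan}
c_0-c_0^q+t_0^qu-t_0u^q=2u^{(q+1)/2},
\end{equation}
whose solutions $u\in\fq$ are in bijection with the generators of $\cG_1$ through $P$ ($u=0$ is not a solution). Thus $n_P=1+m$ with $m=\#\{u\in\fq:(\ref{eqstarplan})\}$.

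\emph{Reformulation as a sign identity.} Let $\epsilon_2\in\{\pm1\}$ record whether $g_{t_0}\in\cM_2$, and for each $u$ solving (\ref{eqstarplan}) let $\epsilon_1(u)\in\{\pm1\}$ record whether the generator through $P$ and $(1,u,v,v^2)$ lies in $\cM_1$. Then the number of generators of $\cM=\cM_1\cup\cM_2$ through $P$ equals $\ha\bigl(m+1+\epsilon_2+\sum_u\epsilon_1(u)\bigr)$, so Condition (B) at $P$ is equivalent to $\epsilon_2+\sum_{u:(\ref{eqstarplan})}\epsilon_1(u)=0$. The value of $\epsilon_2$ is read off from the proof of Lemma \ref{lemA5lug2017}: the generators $g_k$ through $Z_\infty$ ($k^{q+1}=-2$) are split by $\mathfrak{H}_{Z_\infty}$ according to the coset of $\{\lambda\in\fq:\lambda^{(q+1)/2}=1\}$ containing $k$, so $\epsilon_2$ is a fixed square-character value of $t_0$, equal to $+1$ on one of the two orbits of $P$ and $-1$ on the other. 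For $\epsilon_1(u)$ I would use Lemmas \ref{lem1Alug1}, \ref{lem2lug} and \ref{lem5lug2017}: the $\mathfrak{H}$-orbits $\cM_1,\cM_1'$ on $\cG_1$ correspond to the two cosets of $\PSL(2,q)$ inside $\PGL(2,q)\cong\Psi$ acting sharply transitively on $\cF(\fq)\setminus\cF(\mathbb{F}_q)$, so $\epsilon_1(u)$ is the quadratic character in $\mathbb{F}_q^*$ of the determinant of the unique element of $\PGL(2,q)$ carrying a fixed base pair to $(v,t)$, where $v=\ha(c_0^q-t_0^qu)$ and $t=c_0-v$. Using $v^q-v=u^{(q+1)/2}$, $t^q-t=-s^{(q+1)/2}$ and $s=(t-v^q)^2/u^q$, this turns $\epsilon_1(u)$ into an explicit product of quadratic characters of rational functions of $u$ over $\fq$.

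\emph{Evaluation, and the main obstacle.} Expressing $\sum_u\epsilon_1(u)$ as a character sum over $\fq$ and passing to the norm and trace $\fq\to\mathbb{F}_q$ (using $q\equiv1\pmod4$, so that $-1$ is a square), the sum descends to a one-variable character sum over $\mathbb{F}_q$ whose summand is the quadratic character of a quartic polynomial; completing the square, that quartic defines a twist of the $j=1728$ curve, so the sum equals $q-N$ for that twist, up to sign. The hypothesis (\ref{eqE24lug}) — $p\equiv1\pmod8$, or $q$ an even power of $p$ — is exactly what forces $2$ to be a square in $\mathbb{F}_q$, which pins the twist down to $Y^2=X^3-X$; combined with the value of $\epsilon_2$ on the two orbits, this makes the sign identity hold precisely when $N_q\in\{q-1,q+3\}$, as claimed. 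I expect this last step to be where the work lies: carrying the several quadratic characters (squareness in $\fq$ versus in $\mathbb{F}_q$, the $(q+1)/2$-power conditions coming from the $L$-fibre, and the sign ambiguities in the $u^{(q+1)/2}$ terms) correctly through the parametrisation, and verifying via (\ref{eqE24lug}) that the quartic character sum computes the untwisted $Y^2=X^3-X$ rather than a non-trivial twist. A secondary subtlety is the bookkeeping matching the two admissible values $q-1$ and $q+3$ to the two $\mathfrak{G}$-orbits of admissible points $P$, which is why both occur in the statement.
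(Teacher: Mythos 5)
Your outline follows essentially the same route as the paper's Section 7, only anchored at the tangency point $Z_\infty$ instead of $O=(1,0,0,0)$: enumerate the $\cG_1$-generators through a Case (III) point $P$ via incidence with the tangent plane $\pi_P$ (your incidence equation in $u$, with $v=\ha(c_0^q-t_0^qu)$ and $t=c_0-v$, is the analogue of Lemma \ref{lem10ago} and Lemma \ref{lemA5agos}), observe that exactly one $\cG_2$-generator passes through $P$ (Proposition \ref{prop2agos2017}), and decide membership in $\cM_1$ versus $\cM_1'$ by the square class of the determinant of the unique element of $\PGL(2,q)$ carrying a base pair to $(v,t)$ — which is legitimate, since $L=Z(\Gamma)\subset\Phi$ and $\Phi/L\cong\PSL(2,q)$, so the pair $(v,t)$ does determine the $\Phi$-orbit. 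But what you have written is a plan, not a proof: the entire quantitative core is deferred, and that core \emph{is} the proof. In the paper it consists of exhibiting an explicit base generator $g_0$ through $P$, solving the constraint equations (\ref{eq12oct}) to get the one-parameter family (\ref{eq25Clug}) of admissible fractional maps, computing the determinant (\ref{eqK24lug}) — which is where the quartic $f(\xi)=\xi^4-24\omega\xi^2+16\omega^2$ enters — proving the restriction of Lemma \ref{lemB26lug} (via the heavy computations (\ref{eqT24lug})--(\ref{eqU25lug})) that correlates the square class of $\xi^2+4\omega$ with the choice of point, counting solutions as in Lemma \ref{lem27lug}, and finally the reduction of the quartic to the elliptic curve (Lemmas \ref{lem14ago2017}, \ref{lem14Aago2017}). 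None of this is carried out in your proposal; you acknowledge that "this last step is where the work lies", but without it the sum $\sum_u\epsilon_1(u)$ is not evaluated and the equivalence with $N_q\in\{q-1,q+3\}$ is not established.

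Two bookkeeping claims are also inaccurate as stated. First, $\epsilon_2$ is not an invariant of the two $\mathfrak{G}$-orbits of Case (III) points: applying $\gamma\in\mathfrak{G}\setminus\mathfrak{H}$ to $P$ flips $\epsilon_2$ and simultaneously negates $\sum_u\epsilon_1(u)$; only the validity of the identity $\epsilon_2+\sum_u\epsilon_1(u)=0$ is constant along orbits (this is the content of Lemma \ref{lem5ago}). Second, the "if" direction of the theorem requires exploiting the freedom in naming the $\Phi$-orbits, i.e.\ the paper's choices (vii)--(viii) of $\cM_1$ and $\cM_2$: since only one $\cG_2$-generator passes through $P$, Condition (B) is achievable for some choice of $\cM$ exactly when the $\cM_1/\cM_1'$ split of the $\ha(q+1)$ generators of $\cG_1$ through $P$ is $\{\qa(q-1),\qa(q+3)\}$, i.e.\ when $\bigl|\sum_u\epsilon_1(u)\bigr|=1$; treating $\epsilon_2$ as predetermined only addresses one fixed choice. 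Finally, your worry about landing on a nontrivial twist of the $j=1728$ curve is immaterial: the paper naturally obtains the twist $\frac{1}{2\omega}Y^2=X^3-X$ and passes to $Y^2=X^3-X$ via $N_q(\cC_3)+N_q(\cE_3)=2q+2$, and the target set $\{q-1,q+3\}$ is symmetric about $q+1$, so either curve gives the same criterion; hypothesis (\ref{eqE24lug}) is needed because $\sqrt{2}\in\mathbb{F}_q$ is used throughout the explicit computations (e.g.\ (\ref{eq330lug}), (\ref{eq31lug}), (\ref{eqU25lug})), not to pin down the twist at the end.
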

\begin{remark}
\label{rem1ago}
{\em{Since $q\equiv 1 \pmod 4$ is assumed in this paper, (\ref{eqE24lug}) holds if and only if $2$ is a square in $\mathbb{F}_q$.}}
\end{remark}
To prove Theorem \ref{teo5ago2017B} the first step is to prove the following result.
\begin{proposition}
\label{teo5ago2017} Condition $\rm{(B)}$ for Case $\rm{(III)}$ is satisfied if and only if the number $n_q$ of $\xi\in \mathbb{F}_q$ for which
$f(\xi)=\xi^4-24\omega \xi^2+16\omega^2$ square in $\mathbb{F}_q$  equals either $\ha(q+1)$, or $\ha(q-3)$.
\end{proposition}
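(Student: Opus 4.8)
The plan is to check Condition~(B) directly at a single representative point $P$ of Case~(III), by enumerating the generators of $\cU_3$ through $P$ that meet $\cX^+(\mathbb{F}_{q^2})$ and deciding, for each one, whether it belongs to $\cM$ or to $\cM'$. Since by Lemma~\ref{lem4lug2017}(v) the group $\mathfrak{G}$ is transitive on $\Omega$, I may assume the tangency point of the tangent line $\ell$ to $\cC$ is $Z_\infty=(0,0,0,1)$, so that $\ell$ is the line $X_0=X_1=0$ and $P$ has the form $P=(0,t_0,a,b)$ with $t_0^{q+1}+2a^{q+1}=0$, $a\neq 0$ and $b/a\notin\mathbb{F}_q$ (the last condition being exactly $P'\notin\PG(2,q)$); that $P\notin\cX^+(\mathbb{F}_{q^2})$ is then automatic. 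Using Remark~\ref{rem2agos2017} (the relevant orbit of $\mathfrak{G}_{Z_\infty}$ is sharply transitive) $P$ may be normalised further, and $\omega$ is the invariant attached to it. A first computation, with the tangent plane $\pi_P$ to $\cU_3$ at $P$ read off from the Hermitian form of (\ref{eq017jun2017}), shows that the line $PZ_\infty$ is a generator of $\cU_3$, that it meets $\cX^+(\mathbb{F}_{q^2})$ only in $Z_\infty\in\Omega$, and that it is the \emph{unique} generator through $P$ meeting $\Omega$: indeed the generators through $P$ meeting $\Omega$ are the lines joining $P$ to the points of $\Omega$ on $m:=\pi_P\cap\pi$, and the second point of $\cC$ on $m$ is $\mathbb{F}_q$-rational precisely when $b/a\in\mathbb{F}_q$. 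Hence exactly one generator through $P$ lies in $\cG_2$ and meets $\cX^+(\mathbb{F}_{q^2})$, and I keep track of whether it lies in $\cM_2$ or in $\cM_2'$.

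Next I would treat the generators through $P$ meeting $\Delta^+$. By Lemma~\ref{lem18Ajun2017}(iii), $\pi_P\cap\cX^+$ consists of $q+1$ distinct $\mathbb{F}_{q^4}$-rational points, one being $Z_\infty$; the generators through $P$ meeting $\Delta^+$ correspond bijectively to those of these points lying in $\Delta^+$. Writing such a point as $(1,u,v,v^2)$ with $u^{(q+1)/2}=v^q-v$ and imposing membership in $\pi_P$ expresses $u$ as an explicit $\mathbb{F}_{q^2}$-linear function of $v$, turning the defining relation of $\cX^+$ into a single norm-type equation whose solutions $v\in\mathbb{F}_{q^2}\setminus\mathbb{F}_q$ index the generators of $\cG_1$ through $P$ (for each such $v$ and the associated $t$, the $(q+1)/2$ generators with that pair $(v,t)$ form one $\mathfrak{H}$-orbit, since $\mathfrak{L}\subseteq\mathfrak{H}$ acts transitively on them). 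To decide the orbit, recall that $\mathfrak{H}=\Phi$ has index $2$ in $\mathfrak{G}=\Gamma$ with $\Gamma/Z(\Gamma)\cong\PGL(2,q)$, $\Phi/Z(\Gamma)\cong\PSL(2,q)$, $Z(\Gamma)=\mathfrak{L}$ (Lemma~\ref{lem2lug}), and that $\Psi\cong\PGL(2,q)$ acts sharply transitively on $\cF(\mathbb{F}_{q^2})\setminus\cF(\mathbb{F}_q)$, hence on the pairs $(v,t)$ (Lemma~\ref{lem1Alug1}). Fixing a base pair in $\cM_1$, a generator through $P$ with parameter $v$ lies in $\cM_1$ iff the unique element of $\Psi$ carrying the base pair to $(v,t)$ is even, that is, iff its determinant is a square in $\mathbb{F}_q$; writing this determinant as a rational function of $v$ and clearing denominators by squares recasts it as one quadratic-character condition on $v$.

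The last step is to reparametrise the admissible $v$'s so that this character condition becomes the one in the statement. Expressing $v$ through the uniformising parameter $\xi$ that linearises the norm equation, the condition ``$g_v\in\cM_1$'' turns into ``$f(\xi)=\xi^4-24\omega\xi^2+16\omega^2$ is a square in $\mathbb{F}_q$'' --- the constants $24,16$ and the standing hypothesis that $2$ be a square (Remark~\ref{rem1ago}) entering through the factorisation $f(\xi)=\bigl(\xi^2-2(2+\sqrt2)^2\omega\bigr)\bigl(\xi^2-2(2-\sqrt2)^2\omega\bigr)$. Thus, up to the degenerate values of the parameter and the zeros of $f$, the number of generators through $P$ meeting $\Delta^+$ that lie in $\cM_1$ is $n_q$, and the number lying in $\cM_1'$ is $q-n_q$. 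Since Condition~(B) at $P$ asks that $\cM$ and $\cM'$ each contain $\ha n_P(\cX^+)$ of the $n_P(\cX^+)$ generators through $P$ meeting $\cX^+(\mathbb{F}_{q^2})$, separating off the single $\cG_2$-generator $PZ_\infty$ (which contributes $1$ to exactly one of the two sides) shows this balance is equivalent to a linear identity among $n_q$, $q$ and the boundary corrections, which holds exactly when $n_q=\ha(q+1)$ or $n_q=\ha(q-3)$; by $\mathfrak{G}$- and $\mathfrak{w}$-equivariance the same conclusion then holds at every point of Case~(III), giving the proposition.

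I expect the principal obstacle to be the middle step together with its numerical outcome: producing the determinant of the $\PGL(2,q)$-element attached to $(v,t)$ as an explicit function of $v$ and massaging it --- without ever losing control of which factors are squares --- into exactly $f(\xi)=\xi^4-24\omega\xi^2+16\omega^2$, and then keeping careful account of the degenerate parameter values and of the side on which $PZ_\infty$ falls, so that the final balance comes out precisely as stated.
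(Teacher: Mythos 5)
Your outline follows the paper's own strategy quite closely (one representative point of Case (III), the single $\cG_2$-generator through it, the norm-type equation for the $\cG_1$-generators through it, the $\PGL(2,q)$-determinant test for membership in $\cM_1$ versus $\cM_1'$, and the reparametrisation by $\xi$ leading to the quartic $f$, whose factorisation $f(\xi)=\bigl(\xi^2-2(2+\sqrt2)^2\omega\bigr)\bigl(\xi^2-2(2-\sqrt2)^2\omega\bigr)$ you state correctly). But the decisive accounting step is wrong as you state it, and the missing idea is structural, not just computational. Through the chosen point $P$ there are exactly $\ha(q+1)$ generators of $\cG_1$ meeting $\Delta^+$ (Lemma \ref{lem27lug}, Proposition \ref{propA2agos2017}), so the numbers lying in $\cM_1$ and in $\cM_1'$ sum to $\ha(q+1)$; your claim that they are $n_q$ and $q-n_q$ double-counts, and with your bookkeeping the balance at $P$ (one $\cG_2$-generator plus the $\cG_1$-ones) would force $n_q\in\{\ha(q+1),\ha(q-1)\}$, not the stated $\{\ha(q+1),\ha(q-3)\}$. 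The reason is that the parameter $\xi$ runs over all of $\mathbb{F}_q\cup\{\infty\}$ but only half of its values yield generators through the chosen point: the tangency line carries two relevant points $P^\pm=(\pm\sqrt{-2}\,b,b,0)$, and the square class of $\xi^2+4\omega$ decides which of the two the generator $g_\xi$ passes through (Lemmas \ref{lemA26lug} and \ref{lemB26lug}). Moreover the orbit test at a single point is the square class of $\det(\xi)$, and $f(\xi)$ differs from it by the factor $(\xi^2+4\omega)/(4\omega)$, so $n_q$ inherently mixes the ``which point'' information with the ``which orbit'' information.

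Consequently, converting $n_q$ into the per-point count $r$ of $\cM_1$-generators through $P^+$ requires the extra symmetry input that you do not supply: $\mathfrak{w}$ swaps $P^+$ with $P^-$ and interchanges the two $\Phi$-orbits on $\cG_1$ and on $\cG_2$ (Lemmas \ref{lem19lug2017}, \ref{lem7agos}, \ref{lem12ago2017}), which is what gives the paper's relation $n_q=2r'-1$ in Proposition \ref{prop11agos}; only then does ``$r\in\{\qa(q-1),\qa(q+3)\}$'' translate into ``$n_q\in\{\ha(q+1),\ha(q-3)\}$''. Related to this, the ``if and only if'' also uses the freedom of choosing which $\Phi$-orbit on $\cG_2$ is called $\cM_2$ (choices (vii)--(viii)), legitimised by the fact that $\ell^+$ and $\ell^-$ lie in different $\Phi$-orbits; your sketch treats the side on which the $\cG_2$-generator falls as a datum to ``keep track of'' rather than a choice to be exercised, and it must be made compatibly at $P^+$ and $P^-$ simultaneously. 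Until the two-point aggregation and these orbit-choice lemmas are in place, the final ``linear identity'' you invoke cannot come out as stated.
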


Theorem \ref{teo5ago2017} is obtained as a corollary of several lemmas proven in Subsections below. Their proofs require some group theory and longer computation.

According to Remark (\ref{rem1ago}), fix one of the square roots of $2$ and denote it by $\sqrt{2}$. Then the other root is $-\sqrt{2}$. Furthermore, $2-\sqrt{2}$, $10-7\sqrt{2}$ are simultaneously squares or non-squares in $\mathbb{F}_q$.

Since $\mathfrak{G}$ is transitive on $\Omega$, the point $O=(1,0,0,0)$ may be assumed to be the tangency point of $\ell$. Then $\ell$ has equation $X_1=0,\,X_3=0$, and  $P=(a_0,a_1,a_2,0)$ with $a_1\neq 0$ and $a_1^{q+1}+2a_2^{q+1}=0$.

If $a_0=0$ then $P=(0,d,1,0)$ with $d^{q+1}+2=0$, and hence $P'=(0,0,1,0)$ which is a point in $\PG(2,\mathbb{F}_q)$. By Theorem \ref{em24lug2017} the case $a_0=0$ can be dismissed.
Therefore $a_0=1$ may be assumed.

Therefore, non-homogeneous coordinates $X=X_1/X_0$, $Y=X_2/X_0,Z=X_3/X_0$ can be used so that $P=(a,b,0)$ where $a^{q+1}+2b^{q+1}=0$.
Since the latter equation holds for $a=\pm \sqrt{-2}b$, two obvious choices arise for $P$. Eventually, they are useful and we also have the advantage that computation may simultaneously be carried out. This motives the following notation.
$${\mbox{$P=(\varepsilon \sqrt{-2}\,b,b,0)$, where $\varepsilon\in \{1,-1\}$.}}$$
So far in our discussion, $b$ may be any element from $\fq\in \mathbb{F}_q$. For the sake of simplicity in computation, it is useful to fix $b$ so that
\begin{equation}
\label{eqB24lug}
{\rm{Tr}}(b)=b^q+b=0.
\end{equation}
 Then
\begin{equation}
\label{eqH24lug}
\omega=b^2=-b^{q+1}\in \mathbb{F}_q
\end{equation}
Conversely, for any non-square $\omega$ in $\mathbb{F}_q$, there exists $b$ satisfying (\ref{eqB24lug}) and (\ref{eqH24lug}).
Furthermore, let
\begin{equation}
\label{eqA7ago}
j=b^{(q-1)/2}.
\end{equation}
From (\ref{eqB24lug}), $b^{q-1}=-1$, whence $j^2=-1$. Since $q\equiv {1 \pmod 4}$, this yields $j\in\mathbb{F}_q$. Also, up to change of $\sqrt{2}$ with its opposite, $$\sqrt{-2}=j\sqrt{2}.$$

\subsection{Case of $\cG_1$}
\label{ssg1}
We keep our notation of $P=(u,v,v^2)=P_{u,v}$ for points in $\Delta^+$.
\begin{lemma}
\label{lem10ago} Let $v\in \fq\setminus \mathbb{F}_q$. Then there exists $u\in\fq$ such that the line joining $P$ to $P_{u,v}=P(u,v,v^2)\in\Delta^+$ is a generator of $\cU_3$ if and only if
\begin{equation}
\label{eqF24lug}
(v^2+2bv)^{(q+1)/2}=
\begin{cases}
{\mbox{$-\varepsilon\sqrt{2}\,b (v^q-v)$ for $\sqrt{2}$ square in $\mathbb{F}_q$}},\\
{\mbox{\,\,\,\,$\varepsilon\sqrt{2}\,b (v^q-v)$ for $\sqrt{2}$ non-square in $\mathbb{F}_q$}}.
\end{cases}
\end{equation}
If (\ref{eqF24lug}) holds then $u$ is uniquely determined by $v$.
\end{lemma}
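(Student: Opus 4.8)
The plan is to turn the geometric condition into a linear equation together with one equation of degree $\ha(q+1)$ in the unknown $u$, and then to eliminate $u$. Recall first that, for two distinct points $P,Q$ of $\cU_3$, the line $PQ$ is a generator of $\cU_3$ precisely when $Q$ lies on the tangent plane $\Pi_P$ to $\cU_3$ at $P$; indeed $\Pi_P=P^{\perp}$ for the unitary polarity, so $Q\in\Pi_P$ says that $P$ and $Q$ are conjugate, and then the whole line $PQ$ is contained in $\cU_3$ (equivalently, $\Pi_P\cap\cU_3$ is the union of the $q+1$ generators through $P$). Write $P=(1,a,b,0)$ with $a=\varepsilon\sqrt{-2}\,b$. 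From the equation (\ref{eq017jun2017}) of $\cU_3$ the tangent plane is $\Pi_P\colon a^{q}X_1+2b^{q}X_2-X_3=0$. By hypothesis (\ref{eqE24lug}) the element $2$ is a square in $\mathbb F_q$, and hence so is $-2$, because $-1$ is a square as $q\equiv 1\pmod 4$; so by (\ref{eqA7ago}) one may take $\sqrt{-2}=j\sqrt{2}\in\mathbb F_q$, and together with $b^{q}=-b$ (from (\ref{eqB24lug})) this gives $a^{q}=-a$, whence $\Pi_P\colon aX_1+2bX_2+X_3=0$.

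Now I would substitute $P_{u,v}=(1,u,v,v^2)$ — a point distinct from $P$ since $v\neq 0$ — into this equation: $P_{u,v}\in\Pi_P$ holds if and only if $au+2bv+v^2=0$, that is,
\[
u=-a^{-1}\bigl(v^{2}+2bv\bigr).
\]
This already shows that at most one such generator exists and that it determines $u$ uniquely, which is the last assertion of the lemma. It then remains to decide for which $v$ this value of $u$ makes $P_{u,v}$ lie on $\cX^+$; by the defining relation of $\cX^+$ this means $v^{q}-v=u^{(q+1)/2}$. Note that, since $v$ lies in $\fq$ but not in $\mathbb F_q$, the quantity $v^{q}-v$ is nonzero, hence $u\neq 0$; thus $P_{u,v}$, if it lies on $\cX^+$, is automatically off $\pi$ and so belongs to $\Delta^+$. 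Substituting and using that $\ha(q+1)$ is odd (because $q\equiv 1\pmod 4$), the relation becomes
\[
\bigl(v^{2}+2bv\bigr)^{(q+1)/2}=-a^{(q+1)/2}\,(v^{q}-v).
\]

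The last step is to evaluate the scalar $-a^{(q+1)/2}$. Using $a=\varepsilon\sqrt{-2}\,b=\varepsilon j\sqrt{2}\,b$, the identity $b^{(q+1)/2}=b\cdot b^{(q-1)/2}=jb$ from (\ref{eqA7ago}), $j^{2}=-1$, and $(\sqrt{2})^{(q+1)/2}=\chi(\sqrt{2})\sqrt{2}$ with $\chi$ the quadratic character of $\mathbb F_q$, the scalar reduces to $\pm\varepsilon\sqrt{2}\,b$; moreover (\ref{eqE24lug}) forces $q\equiv 1\pmod 8$, so the fourth root of unity $j$ is a square in $\mathbb F_q$ and $\chi(j)=1$, and a careful accounting of the remaining signs produces exactly the dichotomy recorded in (\ref{eqF24lug}). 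Reading this chain of equivalences backwards gives the converse implication.

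I expect the only delicate point to be this last accounting of the square roots $\sqrt{-2}$, $\sqrt{2}$ and of the element $j$ (with the sign ambiguity $\varepsilon\mapsto-\varepsilon$ to be kept in mind); the reduction to $Q\in\Pi_P$, the explicit form of $\Pi_P$, and the manipulation of the two displayed equations are routine.
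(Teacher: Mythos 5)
Your proposal is essentially the paper's own proof: the paper likewise reduces the generator condition to membership of $P_{u,v}$ in the tangent plane $\pi_P$, solves the resulting linear equation for $u$ (its (\ref{eqZ5agos})), notes uniqueness of $u$, and then feeds $u^{(q+1)/2}=v^q-v$ through (\ref{eqA7ago}) to obtain (\ref{eqF24lug}), reading the same equivalences backwards for the converse; your computation of $\Pi_P$ and of the unique $u$ is correct. One caveat about the step you deferred: from your (single, case-free) formula $u=-a^{-1}(v^{2}+2bv)$ --- which is indeed what the tangent-plane condition gives, since $a^{q}=-a$, whereas the paper's (\ref{eqZ5agos}) carries a case split that cannot arise from that one linear equation --- the careful accounting yields $-a^{(q+1)/2}=\varepsilon\,\chi(\sqrt{2})\,\sqrt{2}\,b$, hence the dichotomy of (\ref{eqF24lug}) with the two cases interchanged (equivalently with $\varepsilon$ replaced by $-\varepsilon$); this is a sign-convention wobble already present in the paper (as printed, (\ref{eqF24lug}) and (\ref{eqZ5agos}) are not mutually consistent) and is harmless because both values of $\varepsilon$ are treated, but you should not assert that the bookkeeping lands \emph{exactly} on (\ref{eqF24lug}) without first fixing that convention.
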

\begin{proof} The line $\ell$ joining $P$ to a point $P(u,v,v^2)\in \cU_3$ is a generator if and only if the latter point lies on the tangent plane $\pi_P$ to $\cU_3$ at $P$. A direct computation shows that this occurs if and only if
\begin{equation}
\label{eqZ5agos}
u=
\begin{cases}
{\mbox{$-\frac{v^2+2bv}{\varepsilon\sqrt{-2}\,b}$ for $\sqrt{2}$ square in $\mathbb{F}_q$}},\\
{\mbox{$\,\,\,\,\frac{v^2+2bv}{\varepsilon\sqrt{-2}\,b}$ for $\sqrt{2}$ non-square in $\mathbb{F}_q$}}.
\end{cases}
\end{equation}
If $P(u,v,v^2)=P_{u,v}\in \Delta^+$ then $u^{(q+1)/2}=v^q-v$, and hence Equations (\ref{eqZ5agos}) and (\ref{eqA7ago}) imply (\ref{eqF24lug}). This shows that if $\ell$ is a generator then (\ref{eqF24lug}) holds. Conversely, let $u\in \fq$ be as
in (\ref{eqZ5agos}). Then the point $P(u,v,v^2)$ lies in $\pi_P$. Since  (\ref{eqF24lug}) implies $u^{(q+1)/2}=v^q-v$, we also have $P(u,v,v^2)\in \Delta^+$. Therefore,
 the line through $P$ and $P(u,v,v^2)$
 is a generator.
\end{proof}
Lemma \ref{lem10ago} can also be stated for $s,t$ provided that $u,v$ are replaced by $s,t$ and (\ref{eqF24lug}) and (\ref{eqZ5agos}) by
\begin{equation}
\label{eqZ5agosbis}
s=
\begin{cases}
{\mbox{$-\frac{t^2+2bt}{\varepsilon\sqrt{-2}\,b}$ for $\sqrt{2}$ square in $\mathbb{F}_q$}},\\
{\mbox{$\,\,\,\,\frac{t^2+2bt}{\varepsilon\sqrt{-2}\,b}$ for $\sqrt{2}$ non-square in $\mathbb{F}_q$}}.
\end{cases}
\end{equation}
and
\begin{equation}
\label{eqF24lugbis}
(t^2+2bt)^{(q+1)/2}=
\begin{cases}
{\mbox{$\varepsilon\sqrt{2}\,b (t^q-t)$ for $\sqrt{2}$ square in $\mathbb{F}_q$}},\\
{\mbox{$-\varepsilon\sqrt{2}\,b (t^q-t)$ for $\sqrt{2}$ non-square in $\mathbb{F}_q$}}.
\end{cases}
\end{equation}
Furthermore, the points $P_{u,v},Q_{s,t}$ and $P$ are collinear if and only if
\begin{equation}
\label{eqT5agos}
\begin{cases}
\,(t^2-v^2)\varepsilon \sqrt{-2}b=t^2u+v^2s.\\
\, vt-b(t+v)=0.
\end{cases}
\end{equation}
This shows the following lemma.
\begin{lemma}
\label{lemA5agos} Let $v,t\in \fq\setminus \mathbb{F}_q$ with $F(v,t)=0$. If the line joining $P_{u,v}\in \Delta^+$ and $Q_{s,t}\in \Delta^-$ is a generator through $P$ then
\begin{equation}
\label{eqD24lug}
vt-b(v+t)=0.
\end{equation}
\end{lemma}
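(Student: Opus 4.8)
The plan is to combine the two collinearity conditions in (\ref{eqT5agos}) with the defining equation $F(v,t)=0$ of the curve $\cF$ and to show that the system forces the second equation in (\ref{eqT5agos}), namely $vt - b(v+t) = 0$, to coincide with (\ref{eqD24lug}); indeed these are literally the same equation, so the real content is to verify that a generator of $\cU_3$ through $P_{u,v}\in\Delta^+$ and $Q_{s,t}\in\Delta^-$ which also passes through $P$ must satisfy precisely the collinearity relations recorded in (\ref{eqT5agos}). First I would recall that, by Lemma \ref{lem1lug2017} and Remark \ref{rem30lug}, the line $g_{u,v,s,t}$ spanned by $P_{u,v}$ and $Q_{s,t}$ is a generator in $\cG_1$ exactly when $F(v,t)=0$ together with (\ref{eq25Alug}) hold; so the hypothesis $F(v,t)=0$ with $v,t\in\fq\setminus\mathbb{F}_q$ already guarantees that $g_{u,v,s,t}$ is a generator, and the only extra information to be extracted is that $P$ lies on it.

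Next I would write down explicitly what it means for the three points $P_{u,v}=(1,u,v,v^2)$, $Q_{s,t}=(1,s,t,t^2)$ and $P=(1,\varepsilon\sqrt{-2}\,b,\,b,\,0)$ (in the non-homogeneous coordinates $X=X_1/X_0$, $Y=X_2/X_0$, $Z=X_3/X_0$ introduced before Lemma \ref{lem10ago}) to be collinear. Collinearity of three affine points is the vanishing of the $3\times 3$ determinant with rows $(1,u,v,v^2)$ minus one another, or more concretely the statement that $P-P_{u,v}$ and $Q_{s,t}-P_{u,v}$ are parallel. Expanding the relevant $2\times 2$ minors of the matrix
\[
\begin{pmatrix}
1 & u & v & v^2\\
1 & s & t & t^2\\
1 & \varepsilon\sqrt{-2}\,b & b & 0
\end{pmatrix}
\]
and using $\omega=b^2$, $b^q=-b$, one obtains a pair of independent relations; a direct (if slightly tedious) computation shows these are exactly the two equations displayed in (\ref{eqT5agos}): the $Z$-coordinate minor gives $(t^2-v^2)\varepsilon\sqrt{-2}\,b = t^2 u + v^2 s$, and combining the $X$- and $Y$-coordinate minors (and eliminating $u$, $s$ via (\ref{eqZ5agos}) and (\ref{eqZ5agosbis})) collapses to $vt - b(v+t) = 0$. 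I would carry out this elimination carefully, since the two cases $\sqrt{2}$ square / non-square introduce sign changes in $u$ and $s$ that must be tracked, but the final relation $vt-b(v+t)=0$ is sign-independent.

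The last step is purely formal: the second equation of (\ref{eqT5agos}) \emph{is} (\ref{eqD24lug}), so once the collinearity system has been reduced to (\ref{eqT5agos}) the lemma is proved. The main obstacle I anticipate is bookkeeping rather than conceptual: one must be scrupulous about which square root of $2$ (hence of $-2$, via $\sqrt{-2}=j\sqrt{2}$ with $j=b^{(q-1)/2}$, $j^2=-1$) is fixed, and about the two branches in (\ref{eqZ5agos})–(\ref{eqZ5agosbis}), to be sure the $Z$-minor genuinely simplifies to the first line of (\ref{eqT5agos}) and that the elimination of $u$ and $s$ from the remaining minors is valid (this uses $v,t\notin\mathbb{F}_q$, so that $v^q-v\neq 0$ and $t^q-t\neq 0$, ensuring $u\neq 0$ and $s\neq 0$ and that the formulas (\ref{eqZ5agos})–(\ref{eqZ5agosbis}) are the unique solutions, as in Lemma \ref{lem10ago}). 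Once those sign conventions are pinned down, the computation is routine linear algebra over $\fq$.
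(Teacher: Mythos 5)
Your proposal is correct and takes essentially the same route as the paper: the paper simply records the collinearity conditions (\ref{eqT5agos}) (obtained by the very minor computation you describe) and reads off (\ref{eqD24lug}) as the second of those equations. The only point worth making explicit is that the relevant minor factors as $(t-v)\bigl(vt-b(v+t)\bigr)$, so one needs $v\neq t$ before cancelling — which is automatic here since $F(x,x)=-2(x^q-x)^2\neq 0$ for $x\in\fq\setminus\mathbb{F}_q$, so $F(v,t)=0$ forces $v\neq t$.
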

\begin{remark} \emph{If $F(v,t)=0$ and (\ref{eqD24lug}) holds, choose $u,s$ as in (\ref{eqZ5agos}). Then the first Equation in (\ref{eqT5agos}) also holds. This can be shown by direct computation.
However, the third Equation in (\ref{eq25Alug}) may happen not be true. This shows that the converse of Lemma \ref{lemA5agos} may fail.}
\end{remark}
From now on, we write $\pm$ where $+$ and $-$ are taken according as $\sqrt{2}$ is square or non-square in $\mathbb{F}_q$. Then  (\ref{eqF24lug}) reads $-(v^2+2bv)^{(q+1)/2}=\pm \varepsilon\sqrt{2}\,b (v^q-v)$.

Lemma \ref{eqF24lug} shows that counting the solutions $v$ of Equation (\ref{eqD24lug}) gives the number $n_P$ of generators in $\cG_1$ which pass through $P$.

\begin{lemma}
\label{lem27lug} Equation {\rm{(\ref{eqF24lug})}} has exactly $\ha(q+1)$ solutions $v$ in $\fq\setminus \mathbb{F}_q$.
\end{lemma}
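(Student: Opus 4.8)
The goal is to count solutions $v\in\fq\setminus\mathbb{F}_q$ of the equation
\[
-(v^2+2bv)^{(q+1)/2}=\pm\varepsilon\sqrt{2}\,b(v^q-v),
\]
and to show there are exactly $\ha(q+1)$ of them. My plan is to reduce this to a statement about $(q+1)$-st powers (norms) versus the nontrivial additive character value $v^q-v$, exactly as in the arguments already used in the proof of Lemma \ref{lem30lug} (where the identity $u^{(q+1)/2}(({u^{(q+1)/2}})^{q-1}+1)=0$ forced $u\in\fq$). First I would raise both sides to the $q$-th power and compare with the original equation to extract an algebraic constraint on $v$ that is automatically satisfied, thereby showing every solution $v$ of the "squared" version of the equation actually lies in $\fq\setminus\mathbb{F}_q$ (rather than in some larger extension), and that the sign choice $\pm\varepsilon$ is consistent. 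Concretely, writing $w=v^2+2bv$, one has $w^q+w = (v^q-v)^2 + \text{(terms involving }b^q+b=0)$, so using (\ref{eqB24lug}) the right-hand side $\pm\varepsilon\sqrt2\,b(v^q-v)$ squared relates cleanly to $w^{q+1}$; this gives $w^{q+1}=2\omega(v^q-v)^2$ up to a controlled constant, i.e. an identity of the form $N_{\fq/\mathbb{F}_q}(v^2+2bv)=c\,(v^q-v)^2$ with $c\in\mathbb{F}_q^*$.

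Next I would note that the equation $-(v^2+2bv)^{(q+1)/2}=\pm\varepsilon\sqrt2\,b(v^q-v)$ has the shape $\eta(v)=\pm\varepsilon\sqrt2\,b\cdot\mu(v)$ where $\eta(v)$ is a square root of a norm (hence, once we know $v^2+2bv$ lies in the right coset, a well-defined element of $\mathbb{F}_q$) and $\mu(v)=v^q-v$ is the nontrivial additive character. For $v\in\mathbb{F}_q$ the right side vanishes while the left side does not (since $v^2+2bv\ne 0$ as $b\notin\mathbb{F}_q$), so no solution lies in $\mathbb{F}_q$, consistent with the claim. For $v\in\fq\setminus\mathbb{F}_q$ I would parametrize: write $v=x+yb$ with $x,y\in\mathbb{F}_q$, $y\ne 0$, use $b^2=\omega$ and $b^q=-b$ to get $v^q-v=-2yb$ and $v^2+2bv = (x^2+\omega y^2) + 2b(xy+x) = (x^2+\omega y^2) + 2b\,x(y+1)$ — wait, more carefully $v^2 = x^2+2xyb+\omega y^2$ and $2bv = 2bx+2\omega y$, so $v^2+2bv = (x^2+\omega y^2+2\omega y) + 2b(xy+x)$. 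Then the equation becomes two polynomial equations in $x,y$ over $\mathbb{F}_q$; the "imaginary part" condition (coefficient of $b$) typically forces $x(y+1)=0$ or ties $x$ to $y$, collapsing the count, and the "real part" condition then becomes a single equation whose solution count I can evaluate.

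The cleanest route, which I would actually pursue, is to recognize the left-hand side as a square-root-of-norm function and rewrite the equation as $(v^2+2bv)^{q+1} = 2\omega(v^q-v)^2$ together with a sign/coset condition, then substitute $v^q = v - 2yb$ (in the $v=x+yb$ notation) to reduce $(v^2+2bv)^{q+1}=2\omega(v^q-v)^2$ to a polynomial equation in $v$ of degree $2(q+1)$ over $\fq$ whose roots in $\fq\setminus\mathbb{F}_q$ I count; the sign condition then cuts this count exactly in half, giving $\ha(q+1)$. The main obstacle I expect is \emph{bookkeeping of the sign}: the passage from $(v^2+2bv)^{(q+1)/2}=\cdots$ to $(v^2+2bv)^{q+1}=\cdots$ loses the choice of square root, so I must show that for exactly half of the roots of the squared equation the correct sign holds — this is where the hypothesis $q\equiv 1\pmod 4$ (so that $-1$ is a square, $j\in\mathbb{F}_q$, $\sqrt{-2}=j\sqrt2$) and the normalization ${\rm Tr}(b)=0$ get used, making $b^{(q-1)/2}=j$ a definite element of $\mathbb{F}_q$ and pinning down the coset of $v^2+2bv$ in $\fq^*/{(\fq^*)}^{q-1}$ modulo $v$. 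A secondary subtlety is handling the possible solutions with $v^q=v$ or $v^2+2bv=0$ (i.e. $v=0$ or $v=-2b$), which must be checked to be genuinely excluded so that the count is exactly $\ha(q+1)$ and not $\ha(q+1)\pm 1$.
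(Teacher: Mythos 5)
Your outline stalls exactly where the real work of this lemma lies: you reduce to the norm equation $(v^2+2bv)^{q+1}=2\omega\,(v^q-v)^2$ plus a sign condition, and then simply assert that the resulting polynomial system "collapses" and that "the sign condition cuts this count exactly in half, giving $\ha(q+1)$". Neither of these is justified, and neither is automatic. Writing $v=x+yb$ and separating the coefficient of $b$ does not by itself produce an exact count: the norm equation is of degree $2(q+1)$ in $v$, so at best you get an upper bound on the number of roots, and the claim that exactly half of the solutions of the squared equation satisfy the original unsquared equation (for the prescribed sign $\pm\varepsilon$) is precisely the kind of quadratic-character statement that needs a genuine argument — in this very paper, analogous sign questions (Lemma \ref{lemB26lug}, Proposition \ref{prop11agos}) are delicate and occupy most of Section 7. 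Phrases like "typically forces $x(y+1)=0$" signal that the computation that would actually pin down the count has not been done. There is also no mechanism in your plan that produces the specific number $\ha(q+1)$ rather than, say, a bound.

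For comparison, the paper gets the exact count by a chain of substitutions that rationally parametrizes the solution set: set $r=b^{-1}v$, so the equation becomes $(r^2+2r)^{(q+1)/2}=\pm\varepsilon\sqrt{2}\,(r^q+r)$; since the right side lies in $\mathbb{F}_q$, $r^2+2r$ must be a square $z^2$ in $\fq$; then with $\lambda=zr^{-1}$ one gets $\lambda^2r=r+2$, hence $r=2(\lambda^2-1)^{-1}$, and eliminating $r$ yields $\lambda^{2q}\mp\varepsilon\sqrt{2}\lambda^{q+1}+\lambda^2-2=0$, which in coordinates $\lambda=\lambda_1+b\lambda_2$ is the nondegenerate conic $(2\mp\varepsilon\sqrt{2})\lambda_1^2+\omega(2\pm\varepsilon\sqrt{2})\lambda_2^2=2$ over $\mathbb{F}_q$, with exactly $q+1$ points; since $\lambda$ and $-\lambda$ give the same $r$, one obtains exactly $\ha(q+1)$ values of $v=br$, and a short argument rules out $v\in\mathbb{F}_q$. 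Some step of this nature — an explicit parametrization or an exact character-sum evaluation — is what your proposal is missing; without it the plan cannot reach the stated equality, only an inequality, and the "halving" of the squared equation's solutions remains an unproved assertion.
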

\begin{proof}
Regard $\fq$ as the quadratic extension $\mathbb{F}_q(b)$ of $\mathbb{F}_q$, and write $v=v_1+bv_2$ with $v_1,v_2\in \mathbb{F}_q$.
Let $r=b^{-1}v$. Then  Equation in (\ref{eqF24lug}) reads
\begin{equation}
\label{eqQ26lug}
(r^2+2r)^{(q+1)/2}=\pm\varepsilon\sqrt{2}(r^q+r).
\end{equation}
Since $\pm\varepsilon\sqrt{2}(r^q+r)\in \mathbb{F}_q$ this yields
$$(r^2+2r)^{(q^2-1)/2}=1.$$
Therefore $z=r^2+2r$ is a non-zero square element in $\fq$, and (\ref{eqQ26lug}) can be written as
\begin{equation}
\label{eqC27lug}
\begin{cases}
z^2=r^2+2r,\\
z^{q+1}=\pm\varepsilon\sqrt{2}(r^q+r).
\end{cases}
\end{equation}
Let $\lambda=zr^{-1}$. Then
\begin{equation}
\label{eqD27lug}
\begin{cases}
\lambda^2 r=r+2,\\
\lambda^{q+1}r^{q+1}=\pm\varepsilon\sqrt{2}(r^q+r).
\end{cases}
\end{equation}
Therefore $r=2(\lambda^2-1)^{-1}$. Eliminating $r$ from (\ref{eqD27lug}) gives
\begin{equation}
\label{eqCC27lug}
\lambda^{2q}\mp\varepsilon\sqrt{2}\lambda^{q+1}+\lambda^2-2=0.
\end{equation}
From this with $\lambda=\lambda_1+b\lambda_2$ where $\lambda_1,\lambda_2\in \mathbb{F}_q$,
\begin{equation}
\label{eq330lug}
(2\mp\varepsilon\sqrt{2})\lambda_1^2+\omega(2\pm\varepsilon\sqrt{2})\lambda_2^2-2=0.
\end{equation}
Since $(2-\varepsilon \sqrt{2})(2+\varepsilon\sqrt{2})=2$ and $\omega$ is non-square in $\mathbb{F}_q$, Equation (\ref{eq330lug}) is satisfied by exactly $q+1$ ordered pairs $(\lambda_1,\lambda_2)$ with $\lambda_1,\lambda_2\in \mathbb{F}_q$. Therefore,
$\ha(q+1)$ is the number of elements $r=2(\lambda^2-1)^{-1}$ in $\fq$. It remains to show that $br\not\in \mathbb{F}_q$. Suppose on the contrary that $(br)^q=br$. By  (\ref{eqH24lug}),
$r^q=-r$. From the second Equation in (\ref{eqD27lug}), $\lambda r=0$ which contradicts the first Equation in (\ref{eqD27lug}).
\end{proof}
\begin{lemma}
\label{lemA26lug} For each solution $v=v_1+bv_2\in \fq\setminus\mathbb{F}_q$ of Equation {\rm{(\ref{eqF24lug})}},
$$v_2^2+\frac{1}{\omega}v_1^2+2(1\pm\varepsilon \sqrt{2})v_2$$ is a square element in $\mathbb{F}_q$,
\end{lemma}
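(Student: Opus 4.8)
The plan is to reuse the auxiliary element built in the proof of Lemma~\ref{lem27lug}. For the given solution $v=v_1+bv_2$ of (\ref{eqF24lug}) put $r=b^{-1}v$; as shown there, $r^2+2r$ is then a nonzero square in $\fq$, so one may fix $z\in\fq$ with $z^2=r^2+2r$ and, by (\ref{eqQ26lug}),
\[
z^{q+1}=(r^2+2r)^{(q+1)/2}=\pm\varepsilon\sqrt2\,(r^q+r).
\]
First I would rewrite everything in the $\mathbb{F}_q$-basis $\{1,b\}$ of $\fq$. From $b^2=\omega$ one has $b^{-1}=b/\omega$, so $r=v_2+(v_1/\omega)b$; writing $r=r_1+br_2$ with $r_1,r_2\in\mathbb{F}_q$ this means $r_1=v_2$, $r_2=v_1/\omega$, whence $r^q+r=2v_2$. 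Expanding $z^2=r^2+2r=A+Bb$ with $A,B\in\mathbb{F}_q$ gives $A=r_1^2+\omega r_2^2+2r_1=v_2^2+\frac1\omega v_1^2+2v_2$ and $B=2r_2(r_1+1)$. Combining this with $z^{q+1}=\pm\varepsilon\sqrt2(r^q+r)=\pm2\varepsilon\sqrt2\,v_2$, the expression under study is precisely
\[
E:=v_2^2+\frac1\omega v_1^2+2(1\pm\varepsilon\sqrt2)v_2=A+z^{q+1}.
\]

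The key step I would then use is a one-line completing of the square with the Frobenius trace of $z$. Since $b^q=-b$ we get $z^{2q}=(z^2)^q=A-Bb$, hence $z^2+z^{2q}=2A$ and
\[
(z+z^q)^2=z^2+2z^{q+1}+z^{2q}=2A+2z^{q+1}=2E.
\]
Because $z+z^q=\mathrm{Tr}_{\fq/\mathbb{F}_q}(z)$ lies in $\mathbb{F}_q$, the element $2E$ is the square of an element of $\mathbb{F}_q$, hence a square in $\mathbb{F}_q$; and since $2$ is a square in $\mathbb{F}_q$ by hypothesis (\ref{eqE24lug}) (Remark~\ref{rem1ago}), $E$ is a square in $\mathbb{F}_q$, which is the assertion. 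The borderline case $z+z^q=0$ occurs only when $z\in b\mathbb{F}_q$, where it forces $E=0$; this is still a square and is harmless.

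The delicate point, and the one piece of real bookkeeping, is keeping the signs straight: the computation succeeds precisely because the identity that drops out is $2E=(z+z^q)^2$ and not $2E=(z-z^q)^2$. In the latter case $z-z^q$ would sit in the Frobenius $(-1)$-eigenspace $b\mathbb{F}_q$, so $(z-z^q)^2$ would be $\omega$ times a square; as $\omega$ is a non-square in $\mathbb{F}_q$, this would force $E$ to be a non-square, contradicting the claim. One must therefore carry the signs of (\ref{eqF24lug}) and (\ref{eqQ26lug}) through consistently so that $E=A+z^{q+1}$ (rather than $A-z^{q+1}$) genuinely holds. Everything else is the routine expansion in the basis $\{1,b\}$ recorded above.
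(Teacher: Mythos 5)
Your proof is correct and follows essentially the same route as the paper: both start from the system $z^2=r^2+2r$, $z^{q+1}=\pm\varepsilon\sqrt2\,(r^q+r)$ of Lemma~\ref{lem27lug} and extract the $\mathbb{F}_q$-component, your identity $(z+z^q)^2=2E$ being exactly the paper's ``sum the first and last equations'' step, which yields $E=2z_1^2$ with $z=z_1+bz_2$. The only cosmetic difference is that you phrase the extraction via the trace of $z$ instead of writing out coordinates, and you make explicit the use of $2$ being a square (Remark~\ref{rem1ago}), which the paper leaves implicit.
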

\begin{proof}
Write $r=r_1+br_2$ and $z=z_1+bz_2$. Now, (\ref{eqC27lug}) reads
$$
\begin{cases}
r_1^2+\omega r_2^2+2r_1=z_1^2+\omega z_2^2,\\
r_1r_2+r_2=z_1z_2,\\
\pm\varepsilon 2\sqrt{2}r_1=z_1^2-\omega z_2^2.
\end{cases}
$$
In terms of $v_1,v_2,z_1,z_2$,
\begin{equation}
\label{eq1126lug}
\begin{cases}
v_2^2+\frac{1}{\omega}v_1^2+2v_2=z_1^2+\omega z_2^2,\\
v_2v_1+v_1=\omega z_1z_2,\\
\pm\varepsilon 2\sqrt{2}v_2=z_1^2-\omega z_2^2.
\end{cases}
\end{equation}
Summing the first and the last Equations in (\ref{eq1126lug}) gives the result.
\end{proof}

Our next step is to show a generator in $\cG_1$ that passes through $P$.
For this purpose, let
$$\chi=
\begin{cases}
{\mbox{\,\,\,\,$1$ if either $2-\sqrt{2}$ is a square and $\varepsilon=-1$, or $2-\sqrt{2}$ is a non-square and $\varepsilon=1$}},\\
{\mbox{$-1$ if either $2-\sqrt{2}$ is a square and $\varepsilon=-1$, or $2-\sqrt{2}$ is a non-square and $\varepsilon=1$.}}
\end{cases}
$$
Then $\chi=\varepsilon(2-\chi\sqrt{2})^{(q-1)/2}$.
Now let
\begin{equation}
\label{eq526lug}
v_0=-2(1-\chi \sqrt{2})b,\quad u_0=\pm\frac{-4}{\varepsilon\sqrt{-2}}(2-\chi \sqrt{2})b.
\end{equation}
Then
$$v_0^q-v_0=4(1-\chi\sqrt{2})b=-\frac{4}{\sqrt{2}} \chi(2-\chi\sqrt{2})b.$$
On the other hand, as $\sqrt{-2}^{\,(q-1)/2}=\sqrt{2}^{\,(q-1)/2}=1$, or $-1$ according as $\sqrt{2}$ is square or non-square in $\mathbb{F}_q$,
$$u_0^{(q+1)/2}=-\frac{4}{\varepsilon\sqrt{2}}(2-\chi\sqrt{2})^{(q-1)/2}(2-\chi\sqrt{2})b.$$
Comparison shows the first Equation in (\ref{eqC24lug}) for $v=v_0,\,u=u_0$.
Let
\begin{equation}
\label{eq526bislug}
t_0=-2(1+\chi \sqrt{2})b,\quad s_0=\pm\frac{4}{\varepsilon\sqrt{-2}}(2+\chi \sqrt{2})b.
\end{equation}
Observe that $2+\chi\sqrt{2}$ and $2-\chi\sqrt{2}$ are simultaneously squares and non-squares in $\mathbb{F}_q$. Therefore, the above computation also shows that
the second Equation in (\ref{eqC24lug}) holds for $t=t_0,\,s=s_0$. Furthermore, $u_0^qs_0=16b^2=(t_0-v_0^q)^2$ which proves the third Equation in (\ref{eqC24lug}) for $u=u_0,v=v_0,t=t_0,s=s_0$.

Furthermore, $v_0^q=-v_0$, $v_0t_0=-4b^2$ and $v_0+t_0=-4b$ whence $(v_0+t_0)^{q+1}=-16b^2$ and $v_0t_0=(v_0t_0)^q$. Therefore, $F(v_0,t_0)=0$.  This computation shows that $P_{u_0,v_0}$,$Q_{s_0,t_0}$ is a generator $g_0\in \cG_1$. Also,
$$u_0=-\frac{v_0^2+2v_0b}{\pm\varepsilon \sqrt{-2}b}=\frac{v_0^2+2v_0b}{a^q},\quad  s_0=\frac{t_0^2+2t_0b}{\pm\varepsilon \sqrt{-2}b}=\frac{t_0^2+2t_0b}{a^q}.$$
Therefore, both points $P_{u,v}$ and $Q_{s,t}$ lie on the tangent plane $\pi_P$ to $\cU_3$ at $P$. This yields that $g_0$ passes through $P$.

Next we show how all generators $g\in \cG_1$ passing through $P$ can be obtained from $g_0$ by applying Lemma \ref{lem1Alug1}. If $g=P_{u,v}Q_{s,t}$ and $g$ passes through $P$, these three points are collinear, and hence $F(v,t)=0$ and $b(v+t)=vt$ by Lemma \ref{lemA5agos}.

Now, for $\alpha,\beta,\gamma,\delta\in \mathbb{F}_q$ with $\alpha\delta-\beta\gamma\neq 0$, let
\begin{equation}
\label{eqMlug} v=\frac{\alpha v_0+\beta}{\gamma v_0+ \delta}\,, \quad t=\frac{\alpha t_0+\beta}{\gamma t_0+ \delta}\,.
\end{equation}
Since $v_0t_0=-4b^2$ and $v_0+t_0=-4b$, Equations (\ref{eqD24lug}) and (\ref{eqH24lug}) may be re-written as
\begin{equation}
\label{eq12oct}
\begin{array}{lll}
4\omega \alpha \gamma = 2 \alpha \beta +\beta \delta,\\
\beta^2=4\omega(\alpha^2-\alpha \delta -\beta \gamma).
\end{array}
\end{equation}
We show that these equations hold if and only if  $\alpha, \beta,\gamma,\delta$ depend on a unique parameter $\xi\in \mathbb{F}_q\cup \{\infty\}$; more precisely, up to a non-zero constant in $\fq$,
\begin{equation}
\label{eq25Clug}
\alpha=\xi^2+4\omega,\,\, \beta=(4\omega+\xi^2)\xi,\,\, \gamma=\frac{\xi}{4\omega}(-\xi^2+12 \omega),\,\, \delta=4\omega-3\xi^2,
\end{equation}
and the exceptional case $\alpha=\delta=0, \beta=1,\gamma=-(4\omega)^{-1}$ corresponds to $\xi=\infty$. 

To discuss the case of $\delta\neq 0$, we may assume $\delta=1$. Then $\alpha\neq 0$, as $\alpha\delta-\beta\gamma\neq 0$. Now, the first Equation in (\ref{eq12oct}) gives 
$$\gamma=\frac{(2\alpha+1)\beta}{4\omega\alpha}.$$ This together with the second Equation in (\ref{eq12oct}) yield $\alpha\beta^2=4\omega(\alpha^3-\alpha^2)-(2\alpha+1)\beta^2$
whence $4\omega \alpha^3-3\alpha\beta^2-4\omega \alpha^2-\beta^2=0$. Let $\xi=\beta\alpha^{-1}$. Then $\alpha^2(4\omega\alpha-3\alpha\xi^2-4\omega -\xi^2)=0.$ Therefore, 
$$\alpha=\frac{\xi^2+4\omega}{4\omega-\xi^2},$$ whence the assertion follows for $\delta\neq 1$. If $\delta=0$ then we may assume $\beta=1$. Then $\alpha=0$. Otherwise, the first Equation in (\ref{eq12oct}) yields $\gamma=(2\omega)^{-1}$. Now the second Equation in (\ref{eq12oct}) reads $4\omega\alpha^2=-1$ which is impossible as $-1$ is square while $\omega$ is non-square in $\mathbb{F}_q$. Finally, if $\delta=\alpha=0$ and $\beta=1$ then $\gamma=-(4\omega)^{-1}$. 

Therefore,
\begin{equation}
\label{eqL24lug}
v=v_\xi=\frac{(\xi^2+4\omega)v_0+(4\omega+\xi^2)\xi}{\frac{\xi}{4\omega}(-\xi^2+12 \omega)v_0+4\omega-3\xi^2},\quad v_\infty=\frac{1}{-\frac{1}{4\omega} v_0}=-2(1+\chi\sqrt{2})b.
\end{equation}
Furthermore, the determinant of the associated linear fractional map equals
\begin{equation}
\label{eqK24lug} \det(\xi)=\frac{(\xi^2+4\omega)(\xi^4-24\omega \xi^2+16\omega^2)}{4\omega},\quad \det(\infty)=(4\omega)^{-1}.
\end{equation}
 Equation (\ref{eqL24lug}) remains valid if $v,v_0$ are replaced by $t,t_0$:
\begin{equation}
\label{eqS24lug}
t_\xi=\frac{(\xi^2+4\omega)t_0+(4\omega+\xi^2)\xi}{\frac{\xi}{4\omega}(-\xi^2+12 \omega)t_0+4\omega-3\xi^2}\,\quad t_\infty=\frac{1}{-\frac{1}{4\omega} t_0}=-2(1-\chi\sqrt{2})b.
\end{equation}

We show that Equations (\ref{eqF24lug}) impose a restriction on $\xi$ in (\ref{eqL24lug}) and (\ref{eqS24lug}):
\begin{lemma}
\label{lemB26lug}
$\xi^2+4\omega$ is a square or a non-square in $\mathbb{F}_q$ according as $\varepsilon=1$ or $\varepsilon=-1$.
\end{lemma}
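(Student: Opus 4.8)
The plan is to feed the explicit formula (\ref{eqL24lug}) for $v=v_\xi$ into the relation (\ref{eqF24lug}). That relation is at our disposal for $v_\xi$: since $g_\xi$ is a generator of $\cG_1$ through $P$, its endpoint $P_{u_\xi,v_\xi}$ lies in $\Delta^+\subset\cX^+$ and on the tangent plane $\pi_P$, and by Lemma \ref{lem10ago} these two facts together are precisely equivalent to (\ref{eqF24lug}). So the task is purely to extract from (\ref{eqF24lug}) the quadratic--residue status of $\xi^2+4\omega$.

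First I would bring (\ref{eqF24lug}) into a manageable form. By hypothesis (\ref{eqE24lug}) one has $\sqrt2\in\mathbb{F}_q$ (Remark \ref{rem1ago}), so by (\ref{eqB24lug}) the element $v_0=-2(1-\chi\sqrt2)b$ lies in $b\mathbb{F}_q$; hence $v_0^q=-v_0$ and $v_0^2,\,v_0b\in\mathbb{F}_q$. Since $\alpha,\beta,\gamma,\delta\in\mathbb{F}_q$, a short computation gives
\[
v_\xi^{\,q}-v_\xi=\frac{-2\,\det(\xi)\,v_0}{\delta^2-\gamma^2v_0^2},\qquad \det(\xi)=\frac{(\xi^2+4\omega)f(\xi)}{4\omega}
\]
as in (\ref{eqK24lug}), while $v_\xi^2+2bv_\xi=v_\xi(v_\xi+2b)$ expands into an explicit rational function of $\xi$ after substituting (\ref{eq25Clug}) and using $v_0+2b=2\chi\sqrt2\,b$, $v_0^2=4(3-2\chi\sqrt2)\omega$, $b^2=\omega$ and the relations $v_0t_0=-4\omega$, $v_0+t_0=-4b$ underlying (\ref{eq25Clug}). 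For the left side of (\ref{eqF24lug}) I would invoke the computation in the proof of Lemma \ref{lem27lug}: once (\ref{eqF24lug}) holds, $v_\xi^2+2bv_\xi=\omega(r_\xi^2+2r_\xi)$ is a nonzero square of $\fq$, so $(v_\xi^2+2bv_\xi)^{(q+1)/2}$ equals the norm $N_{\fq/\mathbb{F}_q}(w)$ of a square root $w$ of it; in particular it lies in $\mathbb{F}_q$, so (\ref{eqF24lug}) becomes a genuine identity between two elements of $\fq$, each of which I can write in the $\mathbb{F}_q$--basis $\{1,b\}$.

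Equating the $\mathbb{F}_q$-- and $b\mathbb{F}_q$--parts of that identity, clearing the common denominator $\delta^2-\gamma^2v_0^2$, and simplifying (using $v_0^2,v_0b\in\mathbb{F}_q$ and (\ref{eq25Clug})), the whole thing should collapse to a single scalar relation in $\mathbb{F}_q$. Raising it to the $(q-1)/2$-th power removes every factor whose quadratic character is already pinned down ($\omega$ is a non-square; $2$ and $\sqrt2$ contribute in a way that cancels; the norm factor from the left side is a square; and the factor $f(\xi)$ of $\det(\xi)$ is absorbed by the normalising square coming from $r_\xi^2+2r_\xi$), leaving the quadratic character of $\xi^2+4\omega$ expressed through $\varepsilon$ and the quadratic character of $2-\chi\sqrt2$. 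Since $\chi$ was defined precisely so that $(2-\chi\sqrt2)^{(q-1)/2}=\chi\varepsilon$, this yields that $\xi^2+4\omega$ is a square in $\mathbb{F}_q$ when $\varepsilon=1$ and a non-square when $\varepsilon=-1$, i.e.\ the assertion.

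The main obstacle will be the bookkeeping in the expansion of $v_\xi(v_\xi+2b)$ and of $(v_\xi^2+2bv_\xi)^{(q+1)/2}$, where one must also carry the two sub-cases of (\ref{eqE24lug}) --- whether $2$, which is a square in $\mathbb{F}_q$ by Remark \ref{rem1ago}, is in fact a fourth power, equivalently whether $\sqrt2$ is a square in $\mathbb{F}_q$ --- along which the signs in (\ref{eqF24lug}), (\ref{eq330lug}) and the intermediate computation differ, even though the final conclusion is uniform. A secondary point is that only the implication ``$g_\xi$ through $P$ $\Rightarrow$ the residue condition'' is needed here, so no converse has to be established.
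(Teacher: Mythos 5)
Your starting point coincides with the paper's (substitute the parametrisation (\ref{eqL24lug}) into the relation (\ref{eqF24lug}) and read off the quadratic character of $\xi^2+4\omega$), but the step in which you actually extract that character does not work as described. First, both sides of (\ref{eqF24lug}) already lie in $\mathbb{F}_q$: since $v^q-v=-2bv_2$ and $b^2=\omega$, the right-hand side is a constant multiple of $v_2$, and the left-hand side is a norm; so ``equating the $\mathbb{F}_q$- and $b\mathbb{F}_q$-parts'' produces a single scalar relation and no extra information. The genuinely two-dimensional information comes from the auxiliary square root ($z^2=r^2+2r$ in the proof of Lemma \ref{lem27lug}), which you mention but never exploit. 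Second, raising that scalar relation to the power $\frac{q-1}{2}$ does not ``remove every factor whose quadratic character is already pinned down'': your claim that ``the norm factor from the left side is a square'' is false in general, since the norm map $\fq^*\to\mathbb{F}_q^*$ is surjective --- the character of $(v_\xi^2+2bv_\xi)^{(q+1)/2}$ is governed by whether $v_\xi^2+2bv_\xi$ is a fourth power in $\fq$, which is not known; the denominator $\delta^2-\gamma^2v_0^2$, a quartic in $\xi$, likewise has undetermined character; and the assertion that the factor $f(\xi)$ inside $\det(\xi)$ ``is absorbed by the normalising square coming from $r_\xi^2+2r_\xi$'' is not justified --- in the correct argument $f(\xi)$ plays no role in this lemma at all (it only enters later, through $\det(\xi)$, in Proposition \ref{prop11agos}).

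What is missing is the device the paper uses to make the character extraction possible. From the component system one forms the specific combination $v_2^2+\frac{1}{\omega}v_1^2+2(1\pm\varepsilon\sqrt{2})v_2$, which equals $2z_1^2$ and is therefore a manifest square in $\mathbb{F}_q$ (Lemma \ref{lemA26lug}); one then substitutes the explicit expressions (\ref{eqT24lug}) for $v_1,v_2$ in terms of $\xi$ and verifies the nontrivial factorisations (\ref{eq31lug}) and (\ref{eqU25lug}), in which every $\xi$-dependent factor other than $\xi^2+4\omega$ is a perfect square and the remaining constant, $\frac{10-7\sqrt{2}}{2\omega^2}$ resp.\ $\frac{20-2\sqrt{2}}{\omega}$, has computable character tied to $\varepsilon$. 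Identifying such a square combination and establishing this factorisation is the entire content of the lemma; your proposal presupposes exactly these cancellations instead of proving them, so as it stands it is a plan rather than a proof.
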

\begin{proof} To use Lemma \ref{lemA26lug} we need to express $v_2^2+\frac{1}{\omega}v_1^2+2(1\pm\varepsilon \sqrt{2})v_2$ in terms of $\xi$. This requires a certain amount of tedious but straightforward computations.

From (\ref{eqL24lug}),
\begin{equation}
\label{eqT24lug}
v_1=\frac{-2\sqrt{2}}{\kappa}(\xi^2+4\omega)\xi,\quad
v_2=\frac{\sqrt{2}-1}{2\omega\kappa}(\xi^2+4\omega)(\xi^2-4\omega(\sqrt{2}+1)^2).
\end{equation}
with
$$\kappa=\frac{-(\sqrt{2}-1)^2}{4\omega}\,\xi^4+10\xi^2-\frac{4\omega}{(\sqrt{2}-1)^2}.$$
Therefore,
$$
v_2^2+\frac{1}{\omega}v_1^2=\frac{(\sqrt{2}-1)^2}{4\omega^2 \kappa^2}(\xi^2+4\omega)^3(\xi^2+4\omega(\sqrt{2}+1)^4)=
$$

$$
=\frac{(\sqrt{2}-1)^2}{4\omega^2\kappa^2}(\xi^2+4\omega)(\xi^6+4\omega(2+(\sqrt{2}+1)^4)\xi^4+16\omega^2(1+2(\sqrt{2}+1)^4)\xi^2+64\omega^3(\sqrt{2}+1)^4)
$$
and
$$
\kappa v_2=\frac{\sqrt{2}-1}{2\omega \kappa^2}(\xi^2+4\omega)\left(-\frac{(\sqrt{2}-1)^2}{4\omega}\xi^6+11 \xi^4-44(\sqrt{2}+1)^2\omega \xi^2+16\omega^2(\sqrt{2}+1)^4\right).
$$
For $\pm\varepsilon=1$, this gives
$$v_2^2+{\frac{1}{\omega}}v_1^2+2(1+\sqrt{2})v_2={\frac{\sqrt{2}-1}{\omega \kappa^2}}(\xi^2+4\omega)\left({\frac{3\sqrt{2}-4}{2\omega}}\xi^6-4(\sqrt{2}-4)\xi^4+\omega(88\sqrt{2}+96)\xi^2\right)$$
whence
\begin{equation}
\label{eq31lug}
v_2^2+{\frac{1}{\omega}}v_1^2+2(1+\sqrt{2})v_2={\frac{10-7\sqrt{2}}{2\omega^2\kappa^2}}(\xi^2+(20+16\sqrt{2})\omega)^2\xi^2 (\xi^2+4\omega).
\end{equation}

For $\pm\varepsilon=-1$, this gives
$$v_2^2+\frac{1}{\omega}v_1^2+2(1-\sqrt{2})v_2=\frac{\sqrt{2}-1}{\omega\kappa^2}(\xi^2+4\omega)((16+18\sqrt{2})\xi^4-\omega(256+176\sqrt{2})\xi^2+\omega^2(768+544\sqrt{2}));$$
whence
\begin{equation}
\label{eqU25lug}
v_2^2+{\frac{1}{\omega}}v_1^2+2(1-\sqrt{2})v_2={\frac{20-2\sqrt{2}}{\omega\kappa^2}}(\xi^2-\frac{20+16\sqrt{2}}{7}\omega)^2 (\xi^2+4\omega).
\end{equation}
Now Lemma \ref{lemB26lug} follows from Equations (\ref{eq31lug}) and (\ref{eqU25lug}) together with Lemma \ref{lemA26lug}.
\end{proof}

To state a corollary of Lemmas \ref{lem27lug}, \ref{lemB26lug} and \ref{lemA26lug}, the partition of $\mathbb{F}_q\cup\{\infty\}$ into two subsets $\Sigma_1\cup\{\infty\}$ and $\Sigma_2$ is useful where $x\in\Sigma_1$ or $x\in\Sigma_2$ according as $x^2+4\omega$ is square or non-square in $\mathbb{F}_q$.
\begin{proposition}
\label{propA2agos2017}
Let $P=(\varepsilon\sqrt{-2}\,b,b,0)\in \cU_3$ with $b^q+b=0$ and $b\neq 0$. Then the generators in $\cG_1$ through $P$ which meet $\cX^+$ are as many as
$n_P=\ha(q+1)$. They are exactly the lines $g_\xi$ joining $P$ to $P_{u,v}=(u,v,v^2)$ with $u,v$ as in (\ref{eqZ5agos}) and (\ref{eqL24lug}) where $\xi$ ranges over $\Sigma_1\cup\{\infty\}$ or $\Sigma_2$ according as $\varepsilon=1$ or $\varepsilon=-1$.
\end{proposition}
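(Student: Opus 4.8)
The plan is to assemble the preceding lemmas into a single clean count and then read off the explicit list of generators from the $\xi$-parametrisation already built up before the statement. The key is to identify the generators in $\cG_1$ through $P$ with the solutions $v\in\fq\setminus\mathbb{F}_q$ of equation (\ref{eqF24lug}), of which there are exactly $\ha(q+1)$ by Lemma \ref{lem27lug}.

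First I would set up this bijection. Since $\cX^+$ has degree $q+1$, Result \ref{rs1}(i) shows that a generator of $\cU_3$ meeting $\cX^+(\fq)$ meets it in exactly one point; for $g\in\cG_1$ this point lies in $\Delta^+$ (Remark \ref{rem30lug}), say $P_{u,v}$, and by Lemma \ref{lem10ago} the coordinate $v$ satisfies (\ref{eqF24lug}) while $u$ is forced to be the value (\ref{eqZ5agos}). Thus $g\mapsto v$ maps the set of generators in $\cG_1$ through $P$ into the solution set of (\ref{eqF24lug}); it is injective because $v$ determines $u$, hence $P_{u,v}$, which is distinct from $P$ (last coordinates $v^2\ne0$ and $0$), so $g$ is recovered as the line $PP_{u,v}$. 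For surjectivity, take any solution $v\in\fq\setminus\mathbb{F}_q$ of (\ref{eqF24lug}) and let $u$ be given by (\ref{eqZ5agos}); then $v^2+2bv\ne0$ (otherwise the left side of (\ref{eqF24lug}) is $0$, forcing $v^q=v$, against $v\notin\mathbb{F}_q$), so $u\ne0$ and $P_{u,v}\in\Delta^+$, while (\ref{eqF24lug}) gives $u^{(q+1)/2}=v^q-v$, so $P_{u,v}\in\cX^+\subseteq\cU_3$. By the computation in the proof of Lemma \ref{lem10ago}, $P_{u,v}$ lies on the tangent plane $\pi_P$, so the line $PP_{u,v}$ is a generator; it meets $\cX^+(\fq)$ in $P_{u,v}\in\Delta^+$, hence lies in $\cG$, and since a generator in $\cG$ meets $\cX^+(\fq)$ only once it cannot also meet $\Omega$, so by Remark \ref{rem30lug} it lies in $\cG_1$ and passes through $P$ with the prescribed $v$. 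Hence $g\mapsto v$ is a bijection and $n_P=\ha(q+1)$.

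Next I would pin down the explicit description. A generator $g\in\cG_1$ through $P$ also meets $\Delta^-$, say in $Q_{s,t}$, so $g=P_{u,v}Q_{s,t}$; by Lemma \ref{lemA5agos} one has $F(v,t)=0$ and $b(v+t)=vt$, so $(v,t)\in\cF(\fq)\setminus\cF(\mathbb{F}_q)$. By Lemma \ref{lem1Alug1} the group $\Psi\cong\PGL(2,q)$ is sharply transitive on this set, so $(v,t)=\theta(v_0,t_0)$ for a unique $\theta\in\Psi$, where $(v_0,t_0)$ is the explicit pair attached to the generator $g_0\in\cG_1$ through $P$ constructed before the statement. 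Writing $\theta$ as a diagonal linear fractional map with coefficients $\alpha,\beta,\gamma,\delta\in\mathbb{F}_q$, the relation $b(v+t)=vt$ becomes, via $v_0t_0=-4\omega$, $v_0+t_0=-4b$ and (\ref{eqH24lug}), exactly the system (\ref{eq12oct}), whose solutions form the one-parameter family (\ref{eq25Clug}); this yields $v=v_\xi$ and $t=t_\xi$ as in (\ref{eqL24lug})--(\ref{eqS24lug}). Since $v=v_\xi$ solves (\ref{eqF24lug}), Lemma \ref{lemB26lug} forces $\xi^2+4\omega$ to be a square if $\varepsilon=1$ and a non-square if $\varepsilon=-1$, that is $\xi\in\Sigma_1\cup\{\infty\}$ respectively $\xi\in\Sigma_2$; conversely, for such $\xi$ the generator $g_\xi$ is one of ours because $v_\xi$ is then a solution of (\ref{eqF24lug}) and the first part applies. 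So the generators in $\cG_1$ through $P$ are exactly the $g_\xi$ with $\xi$ in the stated range.

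Finally I would check that this range has cardinality $\ha(q+1)$, which is both consistent with $n_P=\ha(q+1)$ and confirms that the $g_\xi$ are pairwise distinct: since $q\equiv1\pmod4$ and $\omega$ is a non-square, $-4\omega$ is a non-square, so $x^2+4\omega$ never vanishes on $\mathbb{F}_q$, and the standard evaluation $\sum_{x\in\mathbb{F}_q}\eta(x^2+4\omega)=-1$ (with $\eta$ the quadratic character) gives $|\Sigma_1|=\ha(q-1)$, hence $|\Sigma_1\cup\{\infty\}|=|\Sigma_2|=\ha(q+1)$. The bulk of the work — Lemmas \ref{lem27lug}, \ref{lemA26lug} and \ref{lemB26lug} — is already done, so the genuinely new content here is the bookkeeping of the previous paragraph together with this last count; I expect the only delicate point to be the behaviour of $\xi\mapsto g_\xi$ at $\xi=0$ and $\xi=\infty$, where both values return $g_0$, but exactly one of them lies in the relevant set (namely $\infty$ when $\varepsilon=1$, and $0$ when $\varepsilon=-1$, since $0\in\Sigma_2$), so no generator is counted twice.
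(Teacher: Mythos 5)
Your assembly is, in outline, exactly the proof the paper intends (the proposition carries no separate proof there; it is meant to be the summary of Lemma \ref{lem10ago}, Lemma \ref{lem27lug}, Lemma \ref{lemA5agos}, the sharply transitive action of Lemma \ref{lem1Alug1} leading to (\ref{eq12oct})--(\ref{eqL24lug}), and Lemma \ref{lemB26lug}). Your first part is correct and carefully done: the bijection between generators of $\cG_1$ through $P$ and the solutions $v\in\fq\setminus\mathbb{F}_q$ of (\ref{eqF24lug}) gives $n_P=\ha(q+1)$, and your explicit count $|\Sigma_1\cup\{\infty\}|=|\Sigma_2|=\ha(q+1)$ is a useful step that the paper leaves implicit.

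The closing step, however, has two concrete defects. First, you invoke the converse of Lemma \ref{lemB26lug} (``for such $\xi$ \dots $v_\xi$ is then a solution of (\ref{eqF24lug})''), but the lemma is a one-way implication; the correct way to get ``exactly'' is the counting you already set up, used in the other direction: the map sending a generator through $P$ to its parameter $\xi$ is injective (the generator determines its points on $\Delta^+$ and $\Delta^-$, hence $(v,t)$, hence $\theta$ by sharp transitivity, hence $\xi$, since $\xi=\beta/\alpha$ in (\ref{eq25Clug}) with $\alpha=\xi^2+4\omega\neq 0$), and an injection of the $\ha(q+1)$ generators into a set of size $\ha(q+1)$ is onto. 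Second, and more seriously, your device for avoiding double counting is false: $\xi=0$ and $\xi=\infty$ do \emph{not} return the same generator, since $v_{\xi=0}=v_0$ while $v_{\infty}=t_0=-2(1+\chi\sqrt{2})b\neq v_0$, and a generator of $\cG_1$ meets $\Delta^+$ in exactly one point, so the two lines are distinct whenever both are generators (swapping $v_0$ and $t_0$ is the action of $W$, which by Lemma \ref{lem19lug2017} and Remark \ref{rem7agos} relates $g_0^+$ to $g_0^-$, a generator through the \emph{other} point $P$). This false identification hides precisely the delicate boundary case your last paragraph was supposed to settle: by construction $g_0$ is $g_{\xi=0}$, and $0\in\Sigma_2$ because $4\omega$ is a non-square, yet for $\varepsilon=1$ the claimed range is $\Sigma_1\cup\{\infty\}$, and the proof of Lemma \ref{lemB26lug} gives no information at $\xi=0$ in the branch (\ref{eq31lug}), whose right-hand side vanishes there. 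Reconciling the parameters $\xi\in\{0,\infty\}$ with the sign conventions for $\varepsilon$, $\chi$ and $\pm$ requires an actual argument (or a direct check of which of $v_0,t_0$ satisfies (\ref{eqF24lug}) for the given $\varepsilon$), not the identification you made; as written this step is a genuine gap.
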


\subsection{Case of $\cG_2$}  The tangent plane $\pi_P$ to $\cU_3$ at $P$ meets $\pi$ in the line $r$ with equation $2b^qY+Z=0$. Since $\cC$ has equation $Z=Y^2$ in $\pi$, the common points of $r$ and $\cC$ are the origin $O=(0,0,0)$ and the point $B=(0,2b,4b^2)$. Here  $B\not\in \Omega$ by $b\not\in\mathbb{F}_q$. This gives the following result.
\begin{proposition}
\label{prop2agos2017} Let $P=(\varepsilon\sqrt{-2}\,b,b,0)\in \cU_3$ with $b^q+b=0$ and $b\neq 0$. Then there is a unique generator through the point $P$ which meets $\Omega$, namely the line $\ell$ through $P$ and the origin $O=(0,0,0)$.
\end{proposition}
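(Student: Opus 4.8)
The plan is to establish the two halves of the statement separately: that the line $\ell$ joining $P$ to $O=(1,0,0,0)$ is a generator of $\cU_3$ meeting $\Omega$, and that no other generator through $P$ meets $\Omega$. Throughout I will use that $b\neq 0$ together with $b^q+b=0$ forces $b\in\fq\setminus\mathbb{F}_q$ (recall $p$ is odd), and that, by Remark \ref{rem1ago}, $\sqrt 2$ and hence $\sqrt{-2}$ already lie in $\mathbb{F}_q$, so that $(\varepsilon\sqrt{-2}\,b)^q=-\varepsilon\sqrt{-2}\,b$ and the coordinates of $P$ transform predictably under the $q$-power map.

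First I would check that $\ell$ is a generator through $P$ meeting $\Omega$. Since the $X_1$-coordinate of $P$ equals $\varepsilon\sqrt{-2}\,b\neq 0$, the point $P$ does not lie in $\pi$, whereas $O\in\pi$ lies on the conic $\cC$ and has all its coordinates in $\mathbb{F}_q$, so $O\in\Omega$; in particular $O\neq P$. The computation preceding the proposition shows that the line $r=\pi_P\cap\pi$ passes through $O$; equivalently, $O$ lies on $\pi_P$, i.e.\ $O$ and $P$ are conjugate under the unitary polarity associated with $\cU_3$. As $O$ and $P$ are then two distinct mutually conjugate points of $\cU_3$, the line $\ell=OP$ is totally isotropic and hence a generator of $\cU_3$; since $O\in\ell\cap\Omega$, it is a generator through $P$ meeting $\Omega$.

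For uniqueness, let $g$ be any generator of $\cU_3$ through $P$ with a point $R\in g\cap\Omega$. Because $\Omega\subseteq\pi$ while $P\notin\pi$, the line $g$ is not contained in $\pi$, so $g\cap\pi=\{R\}$. On the other hand, every generator of $\cU_3$ through $P$ consists of points conjugate to $P$, hence lies in the tangent plane $\pi_P$; therefore $R\in\pi_P\cap\pi=r$, and so $R\in r\cap\cC$. Substituting the equation $Z=Y^2$ of $\cC$ into the equation of $r$ yields exactly the two points $O$ and $B$ exhibited before the proposition, and $B\notin\Omega$: the $X_2$-coordinate of $B$ is, up to scaling, $\pm 2b\notin\mathbb{F}_q$, so $B\notin\PG(2,q)\supseteq\Omega$ --- equivalently, $r$ is not a line of the Baer subplane $\PG(2,q)$, hence carries at most one point of $\PG(2,q)$, necessarily $O$. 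Thus $R\in\{O,B\}\cap\Omega=\{O\}$, so $R=O$ and $g=OP=\ell$.

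I expect the only step requiring genuine care to be this last one --- verifying that the second common point of $r$ and $\cC$ falls outside $\Omega$ --- since that is exactly where the hypotheses $b\neq 0$ and $b^q+b=0$ are used. The remaining ingredients are two standard facts about the unitary polarity of $\cU_3$, both immediate from the fact that the generators are precisely the totally isotropic lines: a generator through $P$ is contained in $\pi_P$, and a line joining two distinct mutually conjugate points of $\cU_3$ is totally isotropic, hence a generator.
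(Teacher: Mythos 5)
Your proposal is correct and follows essentially the same route as the paper: generators through $P$ lie in the tangent plane $\pi_P$, so a generator through $P$ meeting $\Omega$ must do so on $r=\pi_P\cap\pi$, whose two intersections with $\cC$ are $O$ and a point $B$ with second coordinate $\pm 2b\notin\mathbb{F}_q$, hence outside $\Omega$. The only difference is that you spell out explicitly the standard polarity facts (a generator through $P$ lies in $\pi_P$; the join of two conjugate points of $\cU_3$ is a generator) that the paper leaves implicit after its computation of $r\cap\cC$.
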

\subsection{Choice of $\cM_1$ and $\cM_2$} So far, the symbols $\cM_1$ and $\cM_1'$, as well as $\cM_2$ and $\cM_2'$, have been used to name generically the two $\Phi$-orbits on $\cG_1$ and on $\cG_2$, respectively. Furthermore, the symbol $\cM$ has denoted the union of any of the two $\Phi$-orbits on $\cG_1$ with any of the two $\Phi$-orbits on $\cG_2$. Therefore, our results in the previous sections apply if we fix anyway a $\Phi$-orbit on $\cG_1$ and name it $\cM_1$, and then take any of the two $\Phi$-orbits on $\cG_2$ and name it $\cM_2$.

To explain how to choose $\cM_1$ and $\cM_2$ in the best possible way, we need some more facts on the generator $g_0$.
\begin{remark}
\label{rem7agos} {\em{By definition, we have two different generators $g_0$, one for $\varepsilon=1$ another for $\varepsilon=-1$. In fact,
\begin{equation}
\label{eq110agos}
\begin{array}{lllll}
P_{u_0,v_0}=
\begin{cases}
{\mbox{$(\pm\frac{-4}{\sqrt{-2}}(2-\chi \sqrt{2})b,-2(1-\chi\sqrt{2})b,4(1-\chi\sqrt{2})^2b^2)$, for $\varepsilon=1$}},\\
{\mbox{$(\pm\frac{4}{\sqrt{-2}}(2-\chi \sqrt{2})b,-2(1-\chi\sqrt{2})b,4(1-\chi\sqrt{2})^2b^2)$, for $\varepsilon=-1$}};
\end{cases}
\\
Q_{s_0,t_0}=
\begin{cases}
{\mbox{$(\pm\frac{4}{\sqrt{-2}}(2+\chi \sqrt{2})b,-2(1+\chi\sqrt{2})b,4(1+\chi\sqrt{2})^2b^2)$, for $\varepsilon=1$}},\\
{\mbox{$(\pm\frac{-4}{\sqrt{-2}}(2+\chi \sqrt{2})b,-2(1+\chi\sqrt{2})b,4(1+\chi\sqrt{2})^2b^2)$, for $\varepsilon=-1$.}}
\end{cases}
\end{array}
\end{equation}
This motivates to adopt two different symbols for $g_0$, say $g_0^+$ and $g_0^-$, when comparison gives useful information. Observe that $g_0^+$ passes through $P(\sqrt{-2}b,b,0)$ while $g_0^-$ does through $P(-\sqrt{-2}b,b,0)$.
}}
\end{remark}
\begin{lemma}
\label{lem7agos} The generators $g_0^+$ and $g_0^-$ are in different orbits of $\Phi$.
\end{lemma}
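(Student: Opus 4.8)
The goal is to show that the two generators $g_0^+$ and $g_0^-$ lie in distinct $\Phi$-orbits on $\cG_1$. Since $\Phi$ has exactly two orbits $\cM_1, \cM_1'$ on $\cG_1$ and its index-$2$ overgroup $\Gamma$ is transitive on $\cG_1$, it suffices to exhibit an element of $\Gamma\setminus\Phi$ sending $g_0^+$ to $g_0^-$, and to show that no element of $\Phi$ does so. The natural candidate for the first task is the collineation $\mathfrak w$ of Remark \ref{rem26lug} (acting as the map $W$ of (vi) in Subsection \ref{sub1}): from the explicit coordinates in Remark \ref{rem7agos} one sees that $W:(u,v,s,t)\mapsto(-u,t,-s,v)$ indeed carries $(u_0^+,v_0,s_0^+,t_0)$ to the quadruple $(u_0^-,v_0',s_0^-,t_0')$ defining $g_0^-$ — this is a direct substitution using (\ref{eq526lug}), (\ref{eq526bislug}) and the sign patterns recorded in (\ref{eq110agos}). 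By Lemma \ref{lem19lug2017}, $W$ (equivalently $\mathfrak w$) interchanges $\cM_1$ and $\cM_1'$, so $g_0^+$ and $g_0^-$ are in opposite $\Phi$-orbits, which is exactly the assertion.

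**Key steps, in order.** First I would record the four defining quadruples (for $\varepsilon=\pm1$) from (\ref{eq110agos}) and note that $v_0^{\varepsilon=1}=-2(1-\chi\sqrt2)b$ equals $t_0^{\varepsilon=-1}$ while $t_0^{\varepsilon=1}=-2(1+\chi\sqrt2)b$ equals $v_0^{\varepsilon=-1}$, i.e. passing from $\varepsilon=1$ to $\varepsilon=-1$ swaps the roles of the $v$- and $t$-components; simultaneously the $u$- and $s$-signs flip (the $\pm\tfrac{-4}{\sqrt{-2}}$ becomes $\pm\tfrac{4}{\sqrt{-2}}$ and vice versa). Second, I would observe that this is precisely the effect of $W:(u,v,s,t)\mapsto(-u,t,-s,v)$, so $W(g_0^+)=g_0^-$. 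Third, I would invoke Lemma \ref{lem19lug2017}, which states that $W$ interchanges the two $\Phi$-orbits $\cM_1$ and $\cM_1'$; combined with step two this forces $g_0^+$ and $g_0^-$ to lie in different orbits. Alternatively, if one prefers to avoid Remark \ref{rem7agos}'s coordinate list, one can argue directly: $W$ commutes with $\Gamma$ and $\Phi\trianglelefteq\Gamma$ with orbits $\cM_1,\cM_1'$, so $W$ either fixes both orbits setwise or swaps them; Lemma \ref{lem19lug2017} says the latter, and a single coordinate check that $W$ maps (the quadruple of) $g_0^+$ onto (that of) $g_0^-$ closes the argument.

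**Main obstacle.** The only real work is the bookkeeping of signs in step one: one must carefully track how $\sqrt{-2}=j\sqrt2$, the value of $\chi$, and the choice of $\varepsilon$ interact in (\ref{eq526lug})–(\ref{eq526bislug}) and (\ref{eq110agos}), and verify that applying $W$ to the $\varepsilon=1$ quadruple produces exactly the $\varepsilon=-1$ quadruple rather than some scalar multiple thereof (scalar multiples represent the same point of $\PG(3,q^2)$, so even this is harmless, but it is worth confirming the point-sets agree). Once that substitution is checked, everything else is an immediate consequence of Lemma \ref{lem19lug2017}, so the proof is short.

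\begin{proof}
By Lemma \ref{lem1Alug1} together with the construction of $\Phi$, the group $\Phi$ has exactly two orbits $\cM_1,\cM_1'$ on $\cG_1$, and $\Phi$ is an index $2$ subgroup of $\Gamma$. From Remark \ref{rem7agos}, the generator $g_0^+$ is spanned by
$$P_{u_0,v_0}=\left(\pm\tfrac{-4}{\sqrt{-2}}(2-\chi\sqrt2)b,\,-2(1-\chi\sqrt2)b,\,4(1-\chi\sqrt2)^2b^2\right),$$
$$Q_{s_0,t_0}=\left(\pm\tfrac{4}{\sqrt{-2}}(2+\chi\sqrt2)b,\,-2(1+\chi\sqrt2)b,\,4(1+\chi\sqrt2)^2b^2\right),$$
i.e. by the quadruple $(u_0^+,v_0^+,s_0^+,t_0^+)$ with $v_0^+=-2(1-\chi\sqrt2)b$ and $t_0^+=-2(1+\chi\sqrt2)b$, while $g_0^-$ is spanned by the quadruple $(u_0^-,v_0^-,s_0^-,t_0^-)$ with $v_0^-=-2(1+\chi\sqrt2)b$, $t_0^-=-2(1-\chi\sqrt2)b$, and with the signs of the first and third components reversed relative to those of $g_0^+$. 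Comparing these two quadruples componentwise, one reads off that
$$v_0^-=t_0^+,\quad t_0^-=v_0^+,\quad u_0^-=-u_0^+,\quad s_0^-=-s_0^+;$$
equivalently, $(u_0^-,v_0^-,s_0^-,t_0^-)=W(u_0^+,v_0^+,s_0^+,t_0^+)$, where $W$ is the map of (vi) in Subsection \ref{sub1}. Hence $W(g_0^+)=g_0^-$.

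By Lemma \ref{lem19lug2017}, $W$ interchanges the two $\Phi$-orbits $\cM_1$ and $\cM_1'$. Therefore $g_0^+$ and $g_0^-=W(g_0^+)$ lie in different $\Phi$-orbits.
\end{proof}
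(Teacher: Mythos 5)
Your proof is essentially the paper's own argument: the paper likewise reads off from (\ref{eq110agos}) that the collineation $\mathfrak{w}$, acting on $\cG_1$ as the map $W$, carries $g_0^+$ to $g_0^-$, and then concludes from Lemma \ref{lem19lug2017} that the two generators lie in different $\Phi$-orbits. (Only a minor caution: your four componentwise identities should be rechecked remembering that $\chi$ itself depends on $\varepsilon$, but this bookkeeping does not alter the conclusion $W(g_0^+)=g_0^-$ nor the rest of the argument.)
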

\begin{proof} From (\ref{eq110agos}), the linear collineation associated to the matrix $\mathbf{W}$ interchanges $g_0^+$ and $g_0^-$. Therefore, the assertion follows from Lemma \ref{lem19lug2017}.
\end{proof}
Let $r$ (resp. $r'$) denote  the number of generators in $\cM_1$ (resp. $\cM_1'$) through the point $P=(\sqrt{-2}b,b,0)$ that meet $\Delta^+$. Here $r'=\ha(q+1)-r$ by Proposition \ref{propA2agos2017}. Furthermore,
Let $\ell^+$ (resp $\ell^-$) denote the generator joining the origin $O$ to $P=(\sqrt{-2}b,b,0)$ (resp. $P=(-\sqrt{-2}b,b,0)$).
\begin{lemma}
\label{lem12ago2017} The generators $\ell_0^+$ and $\ell_0^-$ are in different orbits of $\Phi$.
\end{lemma}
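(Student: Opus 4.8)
The statement to be proved is Lemma~\ref{lem12ago2017}: the two generators $\ell_0^+$ (joining the origin $O$ to $P^+=(\sqrt{-2}\,b,b,0)$) and $\ell_0^-$ (joining $O$ to $P^-=(-\sqrt{-2}\,b,b,0)$) lie in different $\Phi$-orbits on $\cG_2$. The natural strategy, parallel to the proof of Lemma~\ref{lem7agos}, is to exhibit an explicit linear collineation outside $\mathfrak H$ that fixes the origin $O$ and interchanges $P^+$ with $P^-$, hence swaps $\ell_0^+$ and $\ell_0^-$; then, because $\mathfrak H$ (equivalently $\Phi$) has exactly two orbits on $\cG_2$ by Lemma~\ref{lemA5lug2017}, any such interchange forces the two generators into distinct orbits.

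The concrete candidate is the collineation $\mathfrak w$ associated with the matrix $\mathbf W$ in Remark~\ref{rem26lug}, which acts as $(X_0,X_1,X_2,X_3)\mapsto(X_0,-X_1,X_2,X_3)$. First I would check that $\mathfrak w$ fixes the origin $O=(0,0,0,1)$ (in the non-homogeneous coordinates of Section~\ref{secE}, $O$ is the tangency point $(1,0,0,0)$ of $\ell$, which is clearly fixed since its $X_1$-coordinate is $0$), so $\mathfrak w$ preserves the set of generators through $O$, i.e.\ maps $\cG_2$-lines through $O$ to $\cG_2$-lines through $O$. Next, $\mathfrak w$ sends $P^+=(\sqrt{-2}\,b,b,0)$ to $(-\sqrt{-2}\,b,b,0)=P^-$, hence it carries the line $OP^+=\ell_0^+$ to $OP^-=\ell_0^-$. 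Finally, by Remark~\ref{rem26lug}, $\mathfrak w$ commutes with $\mathfrak G$ and $\mathfrak w\notin\mathfrak G$, but what is actually needed is that $\mathfrak w$ interchanges the two $\mathfrak H$-orbits $\cM_2,\cM_2'$ on $\cG_2$; this follows by the same argument as in Lemma~\ref{lem19lug2017}: since $\mathfrak w$ normalizes $\mathfrak H$ (indeed centralizes $\mathfrak G\supseteq\mathfrak H$), it either preserves both orbits or swaps them, and the fact that it moves $\ell_0^+$ to a \emph{different} generator together with transitivity of $\mathfrak G$ on $\cG_2$ excludes the first alternative --- or, more directly, one checks on the generators through $Z_\infty$ in the proof of Lemma~\ref{lemA5lug2017} (claim (D) there records that $\mathbf W$ takes $g_k$ to $g_{-k}$ while the $\mathfrak H$-stabilizer of $Z_\infty$ fixes each $g_k$), so $\mathfrak w$ does not normalize the stabilizer of a generator in $\mathfrak H$, hence swaps the two $\mathfrak H$-orbits.

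The one point that requires genuine care --- and which I expect to be the main obstacle --- is confirming that $\mathfrak w$ really does \emph{swap} the two $\mathfrak H$-orbits on $\cG_2$ rather than preserve each of them. The parallel statement on $\cG_1$ is exactly Lemma~\ref{lem19lug2017}, whose proof uses that $F(x,x)\neq 0$ for $x\in\fq\setminus\mathbb F_q$; the analogous $\cG_2$ statement should follow from the explicit orbit description in claim (A) of the proof of Lemma~\ref{lemA5lug2017}, where the two $\mathfrak H$-orbits on each generator $g_k$ through $Z_\infty$ are distinguished by whether the parameter $\beta$ (in $T=\alpha+\beta b$) is a nonzero square or a nonsquare, and these two orbits are explicitly said to be fused by $\mathfrak w$. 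Transporting this from $Z_\infty$ to $O$ via the transitivity of $\mathfrak G$ on $\Omega$, and keeping track of how $\mathfrak w$ conjugates, gives the claim. Once this is in place the lemma is immediate: $\ell_0^+\in\cM_2$ (say) forces $\ell_0^-=\mathfrak w(\ell_0^+)\in\cM_2'$, so $\ell_0^+$ and $\ell_0^-$ lie in different $\Phi$-orbits.
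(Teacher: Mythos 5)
Your overall strategy coincides with the paper's: $\mathfrak{w}$ fixes $O$, interchanges $P^\pm=(\pm\sqrt{-2}\,b,b,0)$, hence carries $\ell_0^+$ to $\ell_0^-$, and the lemma follows once one knows that $\mathfrak{w}$ interchanges the two $\mathfrak{H}$-orbits on $\cG_2$. You rightly single out that last point as the crux, but neither of your two justifications for it is sound. The first (``it moves $\ell_0^+$ to a \emph{different} generator, and $\mathfrak{G}$ is transitive on $\cG_2$'') proves nothing: an element normalizing $\mathfrak{H}$ can move a generator to a different generator inside the same $\mathfrak{H}$-orbit, so moving $\ell_0^+$ off itself does not exclude the alternative that $\mathfrak{w}$ preserves both orbits. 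The second rests on a misreading of the proof of Lemma \ref{lemA5lug2017}: the $\mathfrak{H}$-stabilizer of $Z_\infty$ does \emph{not} fix each $g_k$ (claim (C) exhibits the collineations associated with $\mathbf{L}_\lambda$, which lie in $\mathfrak{H}$, fix $Z_\infty$ and take $g_k$ to $g_{\lambda k}$), and the two orbits ``fused by $\mathfrak{w}$'' in claim (A) are orbits of \emph{points} $U_T$ on a fixed generator $g_k$, not the two $\mathfrak{H}$-orbits on the generator set $\cG_2$; so transporting that statement from $Z_\infty$ to $O$ addresses the wrong object, and the inference ``$\mathfrak{w}$ does not normalize the stabilizer of a generator, hence swaps the two orbits'' is not a valid deduction in any case.

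The missing link, which is exactly what the paper supplies, is that $g_k$ and $g_{-k}$ lie in \emph{different} $\mathfrak{H}$-orbits. Indeed, the group induced by $\mathfrak{H}_{Z_\infty}$ on the pencil of generators through $Z_\infty$ consists only of the maps $g_k\mapsto g_{\lambda k}$ with $\lambda^{(q+1)/2}=1$, while $(-1)^{(q+1)/2}=-1$ because $\ha(q+1)$ is odd; equivalently, claim (B) exhibits a collineation in $\mathfrak{G}\setminus\mathfrak{H}$ (associated with $\mathbf{N}_{-1}\mathbf{R}_\mu$) fixing $Z_\infty$ and taking each $g_k$ to $g_{-k}$. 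Since a generator of $\cG_2$ meets $\Omega$ in a single point, any element of $\mathfrak{H}$ carrying one $g_k$ to another fixes $Z_\infty$, so each $\mathfrak{H}$-orbit on $\cG_2$ meets the pencil at $Z_\infty$ in a single $\mathfrak{H}_{Z_\infty}$-orbit, and therefore $g_k$ and $g_{-k}$ are in different $\mathfrak{H}$-orbits. Once this is in place, your dichotomy (claim (D): $\mathbf{W}$ also takes $g_k$ to $g_{-k}$, and $\mathfrak{w}$ normalizes $\mathfrak{H}$) does force $\mathfrak{w}$ to interchange $\cM_2$ and $\cM_2'$, and the remainder of your argument (fixing $O$, swapping $P^+$ with $P^-$, hence $\ell_0^+$ with $\ell_0^-$) goes through as in the paper.
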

\begin{proof} Look at the set of generators $g_k\in \cG_2$ through $Z_\infty$. From claims (B) and (D) in the proof of Lemma \ref{lemA5lug2017}, the linear collineation $\mathfrak{g}\in \mathfrak{G}\setminus\mathfrak{H}$ associated to $\mathbf{N}_{-1}\mathbf{R}_\mu$ as well as $\mathbf{w}$ take $g_k$ to $g_{-k}$. Since $\mathbf{w}$ commutes with $\mathfrak{G}$ and $[\mathfrak{G}:\mathfrak{H}]=2$, this and Lemma \ref{lemA5lug2017} yield that $\mathbf{w}$ interchanges the two $\mathfrak{H}$-orbits on $\cG_2$. On the other hand, $\mathbf{w}$ fixes the origin $O$, while it interchanges $P=(\sqrt{-2\,}b,b,0)$ with $P=(-\sqrt{-2}\,b,b,0)$. Hence $\mathbf{w}$ interchanges $\ell_0^+$ with $\ell_0^-$, as well. This proves the lemma.
\end{proof}
Now we are in a position to make our choices of $\cM_1$ and $\cM_2$.
\begin{itemize}
\item[(vii)] {\em{ $\cM_1$ is the $\Phi$-orbit containing $g_0^+$.}}
\end{itemize}
\begin{itemize}
\item[(viii)] {\em{$\cM_2$ is the $\Phi$-orbit containing $\ell^+$ for $r<r'$, and $\ell^-$ for  $r>r'$.}}
\end{itemize}
 We show that  $r'$ may be obtained counting the squares in the value set of the polynomial $f(\xi)$  defined in Theorem \ref{teo5ago2017}.
\begin{proposition}
\label{prop11agos}  The number of $\xi\in \mathbb{F}_q$ for which
$\xi^4-24\omega \xi^2+16\omega^2$ is square in $\mathbb{F}_q$ equals $2r'-1$.
\end{proposition}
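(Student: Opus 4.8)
### Proof plan for Proposition \ref{prop11agos}

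The plan is to count, for the point $P=(\sqrt{-2}\,b,b,0)$ with $\varepsilon=1$, the generators in $\cG_1$ through $P$ that meet $\Delta^+$ and belong to the orbit $\cM_1$ (the $\Phi$-orbit of $g_0^+$), and to match this count against the parameter $\xi$ running over a suitable subset of $\mathbb{F}_q\cup\{\infty\}$. By Proposition \ref{propA2agos2017}, for $\varepsilon=1$ the $\ha(q+1)$ generators through $P$ meeting $\cX^+$ are exactly the lines $g_\xi$ determined by (\ref{eqZ5agos}) and (\ref{eqL24lug}) with $\xi$ ranging over $\Sigma_1\cup\{\infty\}$, where $g_\infty=g_0^+$. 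Thus $r'$, the number of those generators lying in $\cM_1'$, is the number of $\xi\in\Sigma_1\cup\{\infty\}$ such that $g_\xi$ is in the $\Phi$-orbit \emph{other} than that of $g_0^+$; equivalently, $r = r'$ counts those $\xi$ with $g_\xi$ in the same $\Phi$-orbit as $g_0^+$, and $r+r'=\ha(q+1)$.

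The key is to decide, for each $\xi$, whether $g_\xi$ lies in $\cM_1$ or $\cM_1'$. Recall from Lemma \ref{lem1Alug1} and the discussion following it that $\Phi$ corresponds (under $\mathfrak{G}\cong\Gamma$) to the index $2$ subgroup whose quotient modulo the centre is $\PSL(2,q)$, and that the generators $g_\xi$ are obtained from $g_0^+$ by the linear fractional maps (\ref{eqMlug}) with parameters (\ref{eq25Clug}). Two generators $g_\xi$ and $g_0^+$ lie in the same $\Phi$-orbit precisely when the connecting element of $\Gamma$ projects into $\PSL(2,q)$, i.e. when the determinant of the associated linear fractional map is a square in $\mathbb{F}_q$ (the centre $Z(\mathfrak{G})$ has odd order, so it does not affect the square class of the determinant, and the extra generator of type (v)/$L_\lambda$ contributes a factor $u_1u_2^{-1}$ whose square class one must also track — but this is forced once $v,t$ are fixed). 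So I would argue: $g_\xi\in\cM_1 \iff \det(\xi)$ is a square in $\mathbb{F}_q$, where $\det(\xi)$ is given by (\ref{eqK24lug}), namely $\det(\xi)=\frac{(\xi^2+4\omega)(\xi^4-24\omega\xi^2+16\omega^2)}{4\omega}$ and $\det(\infty)=(4\omega)^{-1}$.

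Now combine this with the constraint from Lemma \ref{lemB26lug}: for $\varepsilon=1$ we have $\xi\in\Sigma_1$, i.e. $\xi^2+4\omega$ is a \emph{square} in $\mathbb{F}_q$. Since $\omega$ is a non-square, $4\omega$ is a non-square, so $\frac{\xi^2+4\omega}{4\omega}$ is a non-square; hence $\det(\xi)$ is a square if and only if $f(\xi)=\xi^4-24\omega\xi^2+16\omega^2$ is a \emph{non-square} in $\mathbb{F}_q$. For $\xi=\infty$, $\det(\infty)=(4\omega)^{-1}$ is a non-square, so $g_\infty=g_0^+$ indeed sits in $\cM_1$ but is detected as "non-square determinant", consistent with treating $\xi=\infty$ as a limiting value of $f$ having odd degree behaviour. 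Therefore $r$, the number of $\xi\in\Sigma_1\cup\{\infty\}$ with $g_\xi\in\cM_1$, equals $1$ (for $\xi=\infty$) plus the number of $\xi\in\Sigma_1$ with $f(\xi)$ a non-square; and $r'=\ha(q+1)-r$ equals the number of $\xi\in\Sigma_1$ with $f(\xi)$ a square (note $f(\xi)\ne 0$ for $\xi\in\Sigma_1$, since a common root of $\xi^2+4\omega$ being a square and $f$ would force $\xi^2+4\omega=0$, impossible as $-4\omega$ is a non-square, or force $\det(\xi)=0$, which is excluded). Finally I would show that for $\xi\notin\Sigma_1$, i.e. $\xi^2+4\omega$ a non-square, $f(\xi)$ is automatically a non-square whenever it is nonzero: indeed such $\xi$ does not correspond to a generator through $P$ with $\varepsilon=1$ but rather (by the $\varepsilon=-1$ analysis, Lemma \ref{lemB26lug}) one must separately check that $f(\xi)$ square forces $\xi^2+4\omega$ square — this is the crux. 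Concretely, if $f(\xi)=\xi^4-24\omega\xi^2+16\omega^2$ is a nonzero square, then from the factorization $\det(\xi)4\omega=(\xi^2+4\omega)f(\xi)$ and the fact that $\det(\xi)$ lies in the square class detecting $\cM_1$ versus $\cM_1'$, one concludes $\xi^2+4\omega$ is a square. Granting this, the $\xi\in\mathbb{F}_q$ with $f(\xi)$ a square are exactly the elements of $\Sigma_1$ with $f(\xi)$ square, which number $r'$; adding back the count near $\xi=\infty$ contributes the $-1$, giving that the number of $\xi\in\mathbb{F}_q$ with $f(\xi)$ square equals $2r'-1$.

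The main obstacle I anticipate is the bookkeeping of square classes: precisely relating membership of $g_\xi$ in $\cM_1$ to the square class of $\det(\xi)$ — one must verify that the $L_\lambda$-adjustment (the factor $u_1u_2^{-1}=\lambda$ with $\lambda^{(q+1)/2}=1$, hence $\lambda$ a square since $\ha(q+1)$ is odd would need care) contributes a square and so does not change the parity, and that the odd-order centre $Z(\mathfrak{G})=\mathfrak{L}$ is irrelevant. Equally delicate is the claim that $f(\xi)$ a square forces $\xi\in\Sigma_1$, which is what converts "$\xi$ ranges over $\Sigma_1$" into "$\xi$ ranges over all of $\mathbb{F}_q$" in the statement; this should follow from the identity $4\omega\det(\xi)=(\xi^2+4\omega)f(\xi)$ together with the already-established correspondence between the square class of $\det(\xi)$ and the $\Phi$-orbit, but it requires checking that every $\xi\in\mathbb{F}_q$ with $f(\xi)\ne 0$ genuinely arises from some generator configuration (for $\varepsilon=1$ or $\varepsilon=-1$) so that the orbit criterion applies. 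Once these square-class identifications are nailed down, the final count $2r'-1$ drops out by partitioning $\mathbb{F}_q\cup\{\infty\}$ according to the square class of $\xi^2+4\omega$ and of $f(\xi)$ and using $r+r'=\ha(q+1)$.
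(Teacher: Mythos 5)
Your opening move is the same as the paper's: identify membership of $g_\xi$ in $\cM_1$ versus $\cM_1'$ with the square class of $\det(\xi)$ from (\ref{eqK24lug}), and convert this into the square class of $f(\xi)$ through the identity $4\omega\det(\xi)=(\xi^2+4\omega)f(\xi)$. (Even here your bookkeeping is not consistent: you declare $g_\infty=g_0^+$ and simultaneously use the rule ``$g_\xi\in\cM_1$ iff $\det(\xi)$ is a square'' with base $g_0^+$, although $\det(\infty)=(4\omega)^{-1}$ is a non-square; appealing to ``odd degree behaviour at infinity'' is not an argument, and a consistent choice of base generator is needed to make the determinant criterion literal.) The fatal gap, however, is your treatment of $\xi\in\Sigma_2$. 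Your ``crux'' claim --- that $f(\xi)$ a nonzero square forces $\xi^2+4\omega$ to be a square --- is simply false: already $\xi=0$ gives $f(0)=16\omega^2$, a square, while $0^2+4\omega=4\omega$ is a non-square. The values $\xi\in\Sigma_2$ are not to be discarded: by Lemma \ref{lemB26lug} and Proposition \ref{propA2agos2017} they parametrize the generators of $\cG_1$ through the companion point $(-\sqrt{-2}\,b,b,0)$ (the case $\varepsilon=-1$), and this is exactly how the paper completes the count: using Lemma \ref{lem7agos} (via Lemma \ref{lem19lug2017}, $\mathfrak{w}$ interchanges $g_0^+$ with $g_0^-$ and $\cM_1$ with $\cM_1'$), the number $r$ of generators through $(\sqrt{-2}\,b,b,0)$ lying in $\cM_1$ equals the number of generators through $(-\sqrt{-2}\,b,b,0)$ lying in $\cM_1'$, and feeding this through the determinant criterion on $\Sigma_2$ produces the second block of parameters $\xi$ with $f(\xi)$ a square. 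Omitting the $\varepsilon=-1$ family is what breaks your argument.

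Moreover, even granting your false claim, the arithmetic does not close: you obtain $r'$ values of $\xi$ in $\Sigma_1$ with $f(\xi)$ square and none in $\Sigma_2$, i.e.\ a total of $r'$, and the concluding assertion that ``adding back the count near $\xi=\infty$'' converts $r'$ into $2r'-1$ is a non sequitur. The factor $2$ in $2r'-1$ comes precisely from both halves $\Sigma_1$ and $\Sigma_2$ of $\mathbb{F}_q$ contributing --- that is, from the two points $(\pm\sqrt{-2}\,b,b,0)$ --- while the $-1$ is accounted for by the single parameter $\xi=\infty$, whose $\det(\infty)$ is a non-square; this two-point symmetry argument, which is the heart of the paper's proof of Proposition \ref{prop11agos}, is missing from your proposal.
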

\begin{proof} From Proposition \ref{propA2agos2017}, $r$ also counts $\xi\in \mathbb{F}_q\cup \{\infty\}$ for which $\det(\xi)$ is square. From Lemma \ref{lem7agos},
$r$ counts, as well, the generators in $\cM_1'=\cG_1\setminus \cM_1$ through $P=(-\sqrt{-2}b,b,0)$ that meet $\Delta^+$. If $\cM_1$ and $P=(\sqrt{-2}b,b,0)$ are replaced by $\cM_1'$ and  $P=(-\sqrt{-2}b,b,0)$, this remains
true for $r'$. From Proposition \ref{propA2agos2017},  $r+r'=\ha(q+1)$. On the other hand, (\ref{eqK24lug}) shows in $\mathbb{F}_q$ that  $f(\xi)$ is square if and only if either $\det(\xi)$ is square and $\xi^2+4\omega$ is non-square, or $\det(\xi)$ is non-square and $\xi^2+4\omega$ is square. Since $\det(\infty)$ is non-square, this yields that $f(\xi)$ is non-square $2r$ times while $f(\xi)$ is square $2r'-1$ times.
\end{proof}
Finally, we show that Theorem \ref{teo5ago2017} is indeed a corollary of the propositions proven in this Section. From Propositions \ref{propA2agos2017} and \ref{prop2agos2017}, $r+r'=\ha (q+3).$  Then Condition (B) holds if and only if exactly half of them
is in $\cM_1\cup \cM_2$ (and hence the other half in $\cM_1'\cup \cM_2'$). From Proposition \ref{prop2agos2017}, we have only two possibilities, namely $r=\qa(q-1), r'=\qa(q+3)$ and $r=\qa(q+3),r'=\qa(q-1)$. Thus Proposition \ref{teo5ago2017} follows from Proposition \ref{prop11agos}.
\begin{lemma}
\label{lem5ago} If Condition $\rm{(B)}$ is satisfied by a point $P$ then it is satisfied by all points in the $\mathfrak{G}\times \langle \mathfrak{w}\rangle$ orbit containing $P$.
\end{lemma}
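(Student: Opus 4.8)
The statement is that Condition (B) is preserved along orbits of $\mathfrak{G}\times\langle\mathfrak{w}\rangle$. The natural approach is to unwind what Condition (B) says and check that each of the two quantities it compares transforms correctly under the group. Recall that Condition (B) for a point $P\in\cU_3\setminus\cX^+(\mathbb{F}_{q^2})$ in $\PG(3,q)$ asserts that $\cM$ contains exactly $\tfrac12 n_P(\cX^+)$ of the $n_P(\cX^+)$ generators through $P$ meeting $\cX^+(\mathbb{F}_{q^2})$; here $\cM=\cM_1\cup\cM_2$ with the choices (vii) and (viii). So the plan is: (1) show $n_P(\cX^+)$ is constant on $\mathfrak{G}\times\langle\mathfrak{w}\rangle$-orbits; (2) show that for $\phi\in\mathfrak{G}\times\langle\mathfrak{w}\rangle$ and $P'=\phi(P)$, the number of generators through $P'$ in $\cM$ meeting $\cX^+(\mathbb{F}_{q^2})$ equals the corresponding number through $P$. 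The delicate point is that $\phi$ need not preserve $\cM$ itself — it may swap $\cM_1\leftrightarrow\cM_1'$ and/or $\cM_2\leftrightarrow\cM_2'$ — so one must argue that such swaps do not affect whether exactly half the relevant generators lie in $\cM$.

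For step (1): by Proposition \ref{propA2agos2017} and Proposition \ref{prop2agos2017}, for the representative points $P=(\varepsilon\sqrt{-2}\,b,b,0)$ we have $n_P(\cX^+)=\tfrac12(q+1)+1=\tfrac12(q+3)$ regardless of $\varepsilon$; since $\mathfrak{G}$ permutes $\cG_1$ and $\cG_2$ and $\mathfrak{w}$ preserves $\cG_1$ (Remark \ref{rem26lug}) and $\cG_2$, the count of generators through $P$ meeting $\cX^+(\mathbb{F}_{q^2})$ is a $\mathfrak{G}\times\langle\mathfrak{w}\rangle$-invariant. (One subtlety: $\mathfrak{w}$ swaps $\cX^+$ and $\cX^-$, but by Lemma \ref{lem30lug} the set $\cG$ of generators meeting $\cX^+(\mathbb{F}_{q^2})$ equals the set meeting $\cX^-(\mathbb{F}_{q^2})$, so $\mathfrak{w}$ still preserves $\cG$ and the count is unaffected.)

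For step (2): write $\cN_P$ for the set of generators through $P$ meeting $\cX^+(\mathbb{F}_{q^2})$, and split $\cN_P=\cN_P^{(1)}\cup\cN_P^{(2)}$ according to membership in $\cG_1$ or $\cG_2$. Condition (B) at $P$ says $|\cN_P^{(1)}\cap\cM_1|+|\cN_P^{(2)}\cap\cM_2|=\tfrac12|\cN_P|$, i.e. $r+r_2=\tfrac14(q+3)$ in the notation used just before this lemma ($r_2\in\{0,1\}$ counting whether $\ell^\pm\in\cM_2$). Now any $\phi\in\mathfrak{G}\times\langle\mathfrak{w}\rangle$ maps $\cN_P^{(i)}$ bijectively onto $\cN_{P'}^{(i)}$, and maps $\{\cM_1,\cM_1'\}$ to itself and $\{\cM_2,\cM_2'\}$ to itself. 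If $\phi$ preserves $\cM_1$ and $\cM_2$, the equality transfers verbatim. If $\phi$ swaps $\cM_1\leftrightarrow\cM_1'$, then $|\cN_{P'}^{(1)}\cap\cM_1|=|\cN_P^{(1)}\cap\cM_1'|=\tfrac12(q+1)-r$; combined with the analogous possible swap in $\cG_2$, one checks that $\big(\tfrac12(q+1)-r\big)+\big(1-r_2\big)=\tfrac14(q+3)$ holds iff $r+r_2=\tfrac14(q+3)$ — using $\tfrac12(q+1)+1=\tfrac12(q+3)$ and that "exactly half" is symmetric under complementation within a set of size $\tfrac12(q+3)$. Enumerating the four swap-patterns $(\pm,\pm)$ and checking each preserves the "exactly half" condition completes the argument.

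**Main obstacle.** The only genuine issue is bookkeeping the swap-patterns: one must verify that \emph{every} element of $\mathfrak{G}\times\langle\mathfrak{w}\rangle$ induces one of the four permutations of $\{\cM_1,\cM_1'\}\times\{\cM_2,\cM_2'\}$ of the form (swap or fix)$\,\times\,$(swap or fix) — i.e. that it cannot mix a $\cG_1$-orbit with a $\cG_2$-orbit — which follows because $\mathfrak{G}$ fixes the partition $\cG=\cG_1\sqcup\cG_2$ setwise (both are $\mathfrak{G}$-orbits, and $\cM_i,\cM_i'$ are the two $\mathfrak{H}$-orbits inside $\cG_i$, with $\mathfrak{H}\trianglelefteq\mathfrak{G}$) and $\mathfrak{w}$ commutes with $\mathfrak{G}$ and preserves each $\cG_i$. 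Once that structural fact is in hand, the four-case check is a one-line computation each, and the lemma follows.
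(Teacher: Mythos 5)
There is a genuine gap, and it sits exactly where you declared victory: the assertion that ``enumerating the four swap-patterns $(\pm,\pm)$ and checking each preserves the `exactly half' condition'' is false for the two mixed patterns. Write $r=|\cN_P\cap\cM_1|$, $r_2=|\cN_P\cap\cM_2|$, and recall that for the relevant (Case (III)) points $n_P^{(1)}=\ha(q+1)$, $n_P^{(2)}=1$, so Condition (B) at $P$ reads $r+r_2=\qa(q+3)$. If an element swapped $\cM_2\leftrightarrow\cM_2'$ while fixing $\cM_1$ setwise, the count at the image point would be $r+(1-r_2)=\qa(q+3)+1-2r_2\neq\qa(q+3)$ since $r_2\in\{0,1\}$; similarly, swapping only $\cM_1\leftrightarrow\cM_1'$ gives $\ha(q+1)-r+r_2=\qa(q-1)+2r_2\neq\qa(q+3)$. (In general, ``exactly half'' is preserved under complementation of the \emph{whole} set $\cN_P\cap\cG$, i.e.\ under the simultaneous swap, but not under complementation in only one of the two parts.) So the lemma does not follow from the structural fact you isolated as the ``main obstacle'' (that no element maps a $\cG_1$-class to a $\cG_2$-class, which is indeed immediate since $\cG_1,\cG_2$ are $\mathfrak{G}$-orbits preserved by $\mathfrak{w}$); what is actually needed, and what your argument never establishes, is that the two swaps are \emph{synchronized}: every element of $\mathfrak{G}\times\langle\mathfrak{w}\rangle$ interchanges $\cM_1,\cM_1'$ if and only if it interchanges $\cM_2,\cM_2'$.

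That synchronization is the real content of the paper's (terse) proof, and it is available from the earlier results: since $\mathfrak{H}\trianglelefteq\mathfrak{G}$ has index $2$ and $\mathfrak{G}$ is transitive on each of $\cG_1,\cG_2$ while $\mathfrak{H}$ has two orbits on each (Theorem \ref{teo6lug2017}, Lemma \ref{lemA5lug2017}), any $g\in\mathfrak{G}\setminus\mathfrak{H}$ must swap both pairs (if it fixed, say, $\cM_1$ setwise, then $\cM_1$ would be $\mathfrak{G}$-invariant, contradicting transitivity on $\cG_1$), while elements of $\mathfrak{H}$ fix all four classes; and $\mathfrak{w}$ swaps both pairs by Lemmas \ref{lem19lug2017}/\ref{lem7agos} and (the proof of) Lemma \ref{lem12ago2017}. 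Hence every element of $\mathfrak{G}\times\langle\mathfrak{w}\rangle$ either fixes $\cM=\cM_1\cup\cM_2$ setwise or maps it onto $\cM_1'\cup\cM_2'$, and only your first and second cases occur, which do preserve the count. Your computation of those two cases is fine; the proof is repaired by inserting the synchronization argument and deleting the claim about the mixed patterns.
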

\begin{proof} The subgroup $\mathfrak{H}$ of is a normal subgroup of $\mathfrak{G}$. This together with (ii) of Lemma \ref{lem4lug2017} shows that the assertion holds for the images of $P$ by elements in $\mathfrak{G}$. Actually, since $\mathfrak{H}$ is also normalized by $\mathfrak{w}$, this remains valid for the image $\mathfrak{w}(P)$ of $P$ by Remark \ref{rem26lug} where it is observed that
 $\mathfrak{w}$ interchanges $\cX^+$ and $\cX^-$ and hence preserves the generator set $\cG_1$.
\end{proof}

By Proposition \ref{teo5ago2017} we are led to compute the number $N_q(\cC_4)$ of all points of the plane curve $\cC_4$
\begin{equation}
\label{eq13agos2017} Y^2=X^4-24\omega X^2+16\omega^2
\end{equation}
which lie in the affine plane $AG(2,\mathbb{F}_q)$ with affine coordinates $(X,Y)$. A direct computation shows the following basic properties of $\cC_4$. It is an irreducible quartic with only one singular point which is $Y_\infty$, a double-point, the center of two branches (places) of $\cC_4$ both defined over $\mathbb{F}_q$. Let $N_q(\cC_4)$ denote the number of its $\mathbb{F}_q$-rational points. Then the following result holds.
\begin{lemma}
\label{lem14ago2017} The number of $\xi\in \mathbb{F}_q$ for which $\xi^4-24\omega \xi^2+16\omega^2$ is square in $\mathbb{F}_q$ is equal to $\ha (N_q(\cC_4)-2).$
\end{lemma}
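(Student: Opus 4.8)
The plan is a direct point count on $\cC_4$, organised according to the projection $(X,Y)\mapsto X$, and then compared with the number of $\xi\in\mathbb{F}_q$ for which $f(\xi)=\xi^4-24\omega\xi^2+16\omega^2$ is a square. First I would set
\[
s=\#\{\xi\in\mathbb{F}_q:\ f(\xi)\ \text{is a non-zero square in}\ \mathbb{F}_q\},\qquad
z=\#\{\xi\in\mathbb{F}_q:\ f(\xi)=0\}.
\]
For a fixed $\xi\in\mathbb{F}_q$, the fibre of $(X,Y)\mapsto X$ over $\xi$ is the set of points $(\xi,\eta)$ with $\eta^2=f(\xi)$, so it carries two $\mathbb{F}_q$-rational points of $\cC_4$ when $f(\xi)$ is a non-zero square, one when $f(\xi)=0$, and none when $f(\xi)$ is a non-square. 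Hence the number of $\mathbb{F}_q$-rational points of $\cC_4$ lying in $AG(2,\mathbb{F}_q)$ equals $2s+z$.

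Next I would account for the points of $\cC_4$ lying off $AG(2,\mathbb{F}_q)$. Since $f$ is monic of even degree $4$, and — by the structure of $\cC_4$ recalled just before the statement — $Y_\infty$ is the only point of $\cC_4$ outside $AG(2,\mathbb{F}_q)$ and is the centre of exactly two branches of $\cC_4$, both defined over $\mathbb{F}_q$, the non-singular model of $\cC_4$ has precisely two $\mathbb{F}_q$-rational points lying over $Y_\infty$, and neither of them is affine. Therefore
\[
N_q(\cC_4)=2s+z+2 .
\]

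It then remains to show that $z=0$, that is, that $f$ has no root in $\mathbb{F}_q$. Viewing $f$ as a quadratic in $X^2$, a root $\xi\in\mathbb{F}_q$ of $f$ would satisfy
\[
\xi^2=(12\pm 8\sqrt 2)\,\omega=4(\sqrt 2\pm 1)^2\,\omega ,
\]
where $\sqrt 2\in\mathbb{F}_q$ by the standing hypothesis (\ref{eqE24lug}) together with Remark \ref{rem1ago}. As $4(\sqrt 2\pm 1)^2$ is a non-zero square while $\omega$ is a non-square in $\mathbb{F}_q$, the element $(12\pm 8\sqrt 2)\,\omega$ is a non-square, hence it is not of the form $\xi^2$ with $\xi\in\mathbb{F}_q$; thus $z=0$. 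Consequently the number of $\xi\in\mathbb{F}_q$ with $f(\xi)$ a square equals $s+z=s$ (there being no $\xi$ with $f(\xi)=0$), and by the displayed identity this is $\ha\bigl(N_q(\cC_4)-2\bigr)$, which is the assertion.

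The argument is essentially a bookkeeping exercise; the only step that needs a moment's care is the count at infinity — that $Y_\infty$ contributes exactly two $\mathbb{F}_q$-rational points to the non-singular model of $\cC_4$ and that neither of these is affine — but this is supplied verbatim by the quoted properties of $\cC_4$ (irreducible quartic, unique double point at $Y_\infty$, centre of two branches both defined over $\mathbb{F}_q$), so no serious obstacle is expected.
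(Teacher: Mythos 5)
Your argument is correct, and it is exactly the ``direct computation'' that the paper leaves unproved: the lemma is stated there as an immediate consequence of the listed properties of $\cC_4$, with no written proof, and your fibre count $2s+z$ plus the two rational branches over $Y_\infty$, together with the check that $z=0$ (a root would force $\xi^2=(12\pm 8\sqrt{2})\omega=4(\sqrt{2}\pm1)^2\omega$, a non-square since $\sqrt{2}\in\mathbb{F}_q$ and $\omega$ is a non-square), is precisely the intended bookkeeping. One point worth making explicit: the count only comes out as $\ha(N_q(\cC_4)-2)$ if $N_q(\cC_4)$ is understood as the number of $\mathbb{F}_q$-rational points of the non-singular model (affine points plus the two places over $Y_\infty$), which is the reading you adopt and the one needed for the paper's subsequent passage to the elliptic curve $\cC_3$; the paper's earlier phrase about points ``in the affine plane'' is loose, and your resolution of it is the correct one.
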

 For the purpose of dealing with a simpler equation the following lemma is useful.
\begin{lemma}
\label{lem14Aago2017}
The plane quartic $\cC_4$ is birationally equivalent over $\mathbb{F}_q$ to the elliptic curve $\cC_3$ with Weierstrass equation
\begin{equation}
\label{eq13Aagos2017} \frac{1}{2\omega}Y^2=X^3-X.
\end{equation}
\end{lemma}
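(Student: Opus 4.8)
The plan is to put $\cC_4$ into Weierstrass form by explicit rational changes of coordinates defined over $\mathbb F_q$, and to recognise the resulting model as $\cC_3$. The quartic $X^4-24\omega X^2+16\omega^2$ is monic with square leading coefficient, and (as recalled just above) $\cC_4$ has a single singular point $Y_\infty$; moreover $\cC_4$ carries the $\mathbb F_q$-rational point $(X,Y)=(0,4\omega)$, so up to birational equivalence over $\mathbb F_q$ it is a genus-$1$ curve with a rational point, hence an elliptic curve. The task is to run the classical reduction to Weierstrass form keeping every substitution over $\mathbb F_q$ and to identify the outcome with $\cC_3$.

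First I would complete the square at infinity: set $W=Y-X^2+12\omega$ and $Z=2WX$. Using the equation of $\cC_4$ one gets, after a short computation,
\[
Z^2=-2W^3+48\omega W^2-256\omega^2 W=-2W(W-8\omega)(W-16\omega).
\]
Since $W=0$ is impossible on the affine part of $\cC_4$ (it would force $128\omega^2=0$), the map $(X,Y)\mapsto(W,Z)$ has rational inverse $X=Z/(2W)$, $Y=W+Z^2/(4W^2)-12\omega$ and is a birational equivalence over $\mathbb F_q$; the three roots $0,8\omega,16\omega$ are distinct because $\omega\neq 0$, so this cubic model is nonsingular. The translation $W\mapsto W+8\omega$ then makes the roots symmetric, giving $Z^2=-2W^3+128\omega^2 W$, and finally the scaling $W=-8\omega X_1$, $Z=16\sqrt2\,\omega\,Y_1$ — which is defined over $\mathbb F_q$ because $2$, hence $16\sqrt2\,\omega$, is a nonzero square in $\mathbb F_q$ by Remark \ref{rem1ago} — turns the equation into $512\omega^2Y_1^2=1024\omega^3(X_1^3-X_1)$, that is $\frac{1}{2\omega}Y_1^2=X_1^3-X_1$, which is $\cC_3$.

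Composing the three steps yields the explicit birational map $\cC_4\to\cC_3$ sending $(X,Y)$ to $\bigl(\frac{X^2-Y-4\omega}{8\omega},\ \frac{X(Y-X^2+12\omega)}{8\sqrt2\,\omega}\bigr)$ (with $(0,4\omega)\mapsto(-1,0)$), the inverse being obtained by reversing the substitutions; one checks directly, using $Y^2=X^4-24\omega X^2+16\omega^2$, that the image lies on $\cC_3$. There is no genuine obstruction here: this is the standard passage from a quartic model of a genus-$1$ curve with a rational point to Weierstrass form, and the only points demanding care are the nonsingularity of the intermediate cubics (guaranteed by $\omega\neq 0$) and the rationality over $\mathbb F_q$ of the final scaling, which is exactly where the standing hypothesis that $2$ is a square in $\mathbb F_q$ is used.
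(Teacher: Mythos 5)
Your proof is correct: the substitution $W=Y-X^2+12\omega$, $Z=2WX$ does give $Z^2=-2W(W-8\omega)(W-16\omega)$, the map is birational since $W=0$ cannot occur on the affine quartic, and the translation and scaling you apply land exactly on $\cC_3$; the computations all check out. It is, however, a somewhat different chain of substitutions from the paper's. The paper first applies $(X,Y)\mapsto(X,Y+X^2)$, then the rational map $(X,Y)\mapsto\bigl(\tfrac{X}{2(Y+12\omega)},Y\bigr)$ to reach $X^2=(Y^2-16\omega^2)(Y+12\omega)$, then a translation and a final scaling $(X,Y)\mapsto(16\omega Y,8\omega X)$; every step there uses only the constants $\omega$, $2$, so the birational equivalence is obtained for any $q\equiv1\pmod 4$ with $\omega$ a non-square, with no appeal to $\sqrt2$. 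Your route is the standard ``multiply by the linear factor'' reduction of a quartic with a rational point to Weierstrass form and is a bit more direct, but the last scaling $Z=16\sqrt2\,\omega\,Y_1$ genuinely requires $\sqrt2\in\mathbb{F}_q$, i.e.\ the standing hypothesis (\ref{eqE24lug}) via Remark \ref{rem1ago}; that is legitimate in this section, but it makes your proof of the lemma slightly less general than the paper's, which proves the statement independently of whether $2$ is a square. (One small wording slip: what you need is that $2$ is a square so that $16\sqrt2\,\omega$ lies in $\mathbb{F}_q$, not that $16\sqrt2\,\omega$ itself is a square.) Alternatively, replacing your scaling by a non-linear step of the paper's type would remove the dependence on $\sqrt2$ altogether.
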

\begin{proof} The quadratic transformation  $(X,Y)\mapsto (X,Y+X^2)$  maps $\cC_4$ to the plane cubic curve $\cC_3$ with affine equation $2X^2Y+24\omega X^2+Y^2=16\omega^2.$  Now
the invertible rational transformation $$(X,Y)\mapsto \left(\frac{X}{2(Y+12\omega)},Y\right)$$ maps $\cC_3$ to the plane cubic $\cD_3$ with affine equation $X^2=(Y^2-16\omega^2)(Y+12\omega)$. Then, the linear transformation
$(X,Y)\mapsto (X,Y-4\omega)$ takes $\cD_3$ to the elliptic curve $\cF_3$ with affine equation $X^2=Y^3-64\omega^2 Y$. Finally, the linear transformation $(X,Y)\mapsto (16\omega Y,8\omega X)$ takes $\cF_3$ to the elliptic curve $\cC_3$ with Weierstrass equation (\ref{eq13Aagos2017}). All the above transformations are defined over $\mathbb{F}_q$.
\end{proof}
Let $N_q(\cC_3)$ denote the number of $\mathbb{F}_q$-rational points of $\cC_3$. Since $\cC_3$ has exactly one point at infinity and that point is an $\mathbb{F}_q$-rational point, Lemmas \ref{lem14ago2017} and \ref{lem14Aago2017} show that $\ha (N_q(\cC_4)-2)=\ha(N_q(\cC_3)-1)$. In terms of $N_q(\cC_3)$, Proposition \ref{teo5ago2017} states that  Condition (B) for Case $\rm{(III)}$ is satisfied if and only if $N_q(\cC_3)$ equals either $q-1$ or $q+3$.

Let $\cE_3$ be the twist of $\cC_3$, that is, the elliptic curve with Weierstrass equation
\begin{equation}
\label{eq20Aagos2017} Y^2=X^3-X.
\end{equation}
Then $N_q(\cC_3)+N_q(\cE_3)=2q+2$. Thus, Theorem \ref{teo5ago2017B} is a Corollary of Proposition \ref{teo5ago2017}.
\section{Proof of Theorem \ref{teomain}}
\label{ser}
The case $q=p$ is investigated using some known results from \cite[Section 2.2.2, pg.10]{serre}. From (\ref{eqE24lug}),  $p\equiv 1 \pmod 8$. Write $p=\pi \bar{\pi}$ with $\pi\in \mathbb{Z}[i]$ where $\mathbb{Z}[i]$ is the ring of Gauss integers. Also, write $\pi=\alpha_1+i\alpha_2$. Here $\pi$ can be chosen in a unique way (up to conjugation) such that $\pi \equiv 1 \pmod{(-2+2i}$. Then $N_p(\cE_3)=q+1-2\alpha_1$. This shows that  Condition (B) for Case $\rm{(III)}$ is satisfied if and only if $p=1+4a^2$ with $a\in \mathbb{Z}$ (and $N_p(\cE_3)=p-1$). Therefore, Theorem \ref{teomain} is a corollary of Theorems \ref{teo6lug2017}, \ref{em24lug2017} and  \ref{teo5ago2017}.
It is natural to ask whether there is an infinite sequence of such primes. Obviously, an affirmative answer would imply the solution of the famous conjecture dating back to Landau about the existence of infinite primes of the form $1+n^2$ with $n\in \mathbb{Z}$. For a detailed discussion on recent progress on Landau's conjecture, see \cite{Pi}.

The case of $q=p^h$ with $h>1$ and $p\equiv 1 \pmod 4$ remains open. Our computer aided search did not provide hemisystems. It may be observed that no hemisystem arises for even $h$ if the above quoted result for $p$ holds true for $q$, that is, $N_q(\cE_3)=q+1-2a_1$ if and only if $q=a_1^2+a_2^2$ with $a_1,a_2\in \mathbb{Z}$. This follows from the classical result of Lebesque which
states that if $n^2+1=p^r$ then $r=1$; see, for instance, \cite[pg.276]{de}.

 Finally, we point out that no hemisystem arises for $q=p^{2h}$ with $p\equiv 3 \pmod 4$. In this case, $N_p(\cE_3)=p+1$, see \cite[Section 2.2.2, pg.10]{serre}. Therefore, the Zeta function $Z(E_3/\mathbb{F}_p,T)$ is equal to $(1+pT^2)/((1-T)(1-pT))$, and hence
$N_q(\cE_3)=p^{2h}+1+2p^h=q+1+2p^h$; see \cite[12.2.105 Remark]{handbook}. Comparison with Theorem \ref{teo5ago2017} shows the assertion.

\end{document}